\newtheorem{theorem}{Theorem}[section]
\newtheorem{lemma}[theorem]{Lemma}
\newtheorem{proposition}[theorem]{Proposition}
\newtheorem{corollary}[theorem]{Corollary}
\newtheorem{claim}[theorem]{Claim}
\theoremstyle{definition}
\newtheorem{remark}[theorem]{Remark}
\newtheorem{remarks}[theorem]{Remarks}
\numberwithin{equation}{section}
\newcommand{\supp}{\mathrm{supp}}      
\renewcommand{\div}{\mathrm{div}\,}    
\providecommand{\norm}[1]{\lVert#1\rVert} 
\newcommand{\R}{\mathbb{R}}
\newcommand{\K}{\mathbb{K}}
\newcommand{\T}{\mathbb{T}}
\newcommand{\C}{\mathbb{C}}
\newcommand{\N}{\mathbb{N}}
\newcommand{\PP}{\mathbb{P}}
\newcommand{\cR}{{\mathcal R}}
\newcommand{\lpso}{L^p_{\overline{\sigma}}(\Omega)}
\newcommand{\os}{{\overline{\sigma}}}
\newcommand{\Ae}{\mathcal{A}} 
\newcommand{\Aip}{A}
\newcommand{\Aipos}{A_\os}
\newcommand{\Apos}{A_{p,\os}}
\newcommand{\Ap}{A_{p}}
\newcommand{\Xip}{X}
\newcommand{\Xipos}{X_\os}
\newcommand\restr[2]{{
  \left.\kern-\nulldelimiterspace 
  #1 
  \vphantom{\big|} 
  \right|_{#2} 
  }}
\title[Hydrostatic Stokes Semigroup on Spaces of Bounded Functions]{The Hydrostatic Stokes Semigroup and Well-Posedness of the Primitive Equations on Spaces of Bounded Functions}
\subjclass[2010]{Primary: 35Q35; Secondary: 47D06, 76D03, 86A05.}
\keywords{hydrostatic Stokes semigroup, $L^\infty$-estimates,  primitive equations, global strong well-posedness \\
	This work was partly supported by the DFG International Research Training Group IRTG 1529 and the JSPS Japanese-German Graduate Externship on Mathematical Fluid Dynamics. 
	The first author is partly supported by JSPS through grant Kiban S (No. 26220702),
Kiban A (No. 17H01091), Kiban B (No. 16H03948), the second and fourth author are
supported by IRTG 1529 at TU Darmstadt, the fifth author is supported by JSPS Grant-in-Aid for Young Scientists B (No. 17K14230).}
\author[Giga]{Yoshikazu Giga} 
\address{Graduate School of Mathematical Sciences, University of Tokyo, Komaba 3-8-1, Meguro-ku, Tokyo, 153-8914, Japan }
\email{labgiga@ms.u-tokyo.ac.jp}
\author[Gries]{Mathis Gries} 
\address{Departement of Mathematics,
	TU Darmstadt, Schlossgartenstr. 7, 64289 Darmstadt, Germany}
\email{gries@mathematik.tu-darmstadt.de}
\author[Hieber]{Matthias Hieber} 
\address{Departement of Mathematics,
	TU Darmstadt, Schlossgartenstr. 7, 64289 Darmstadt, Germany}
\email{hieber@mathematik.tu-darmstadt.de}
\author[Hussein]{Amru Hussein} 
\address{Departement of Mathematics,
	TU Darmstadt, Schlossgartenstr. 7, 64289 Darmstadt, Germany}
\email{hussein@mathematik.tu-darmstadt.de}
\author[Kashiwabara]{Takahito Kashiwabara}
\address{Graduate School of Mathematical Sciences, The University of Tokyo, 3-8-1 Komaba, Meguro, Tokyo 153-8914, Japan}
\email{tkashiwa@ms.u-tokyo.ac.jp}
\begin{document}

\begin{abstract}
Consider the $3$-d primitive equations in a layer domain $\Omega=G \times (-h,0)$, $G=(0,1)^2$, subject to mixed Dirichlet and Neumann boundary conditions at $z=-h$ and $z=0$, respectively, and the 
periodic lateral boundary condition. It is shown that this equation is globally, strongly well-posed for arbitrary large data of the form 
$a=a_1 + a_2$, where $a_1\in C(\overline{G};L^p(-h,0))$, $a_2\in L^{\infty}(G;L^p(-h,0))$ for $p>3$,  and where $a_1$ is periodic in the horizontal variables and $a_2$ is sufficiently small. 
In particular, no differentiability condition on the data is assumed. The approach relies on $L^\infty_HL^p_z(\Omega)$-estimates for terms of the form 
$t^{1/2} \lVert \partial_z e^{t\Aipos}\mathbb{P}f \rVert_{L^\infty_H L^p_z(\Omega)}\le C e^{t\beta} \lVert f \rVert_{L^\infty_H L^p_z (\Omega)}$ for $t>0$, where $e^{t\Aipos}$ denotes the hydrostatic Stokes semigroup.  
The difficulty in proving estimates of this form  is that the hydrostatic Helmholtz projection $\mathbb{P}$ fails to be bounded with respect to the $L^\infty$-norm.
The global strong well-posedness result is then obtained by an iteration scheme, splitting the data into a smooth and 
a rough part and by combining a reference solution for smooth data with an evolution equation for the rough part. 
\end{abstract}

\maketitle

\section{Introduction}\label{sec:intro}

The primitive equations are a model for oceanic and atmospheric dynamics and are derived from the Navier-Stokes equations by assuming a hydrostatic balance for the pressure term, see 
\cite{Lionsetall1992, Lionsetall1992_b, Lionsetall1993}. These equations are known to be globally and  strongly well-posed in the three dimensional setting for arbitrarily large data belonging 
to $H^1$ by the celebrated  result of  Cao and Titi \cite{CaoTiti2007}. The latter considers the case of Neumann boundary conditions and this result also holds true for the case mixed 
Dirichlet and Neumann boundary conditions, again for data in $H^1$, as shown by Kukavika and Ziane \cite{Ziane2007}. 

Several approaches have been developed in the last years aiming for extending the above two results to the case of rough initial data. One approach is based on the theory of 
weak solutions, see e.g. \cite{LiTiti2015, TachimMedjo2010, Kukavicaetall2014, Ziane2009}. Although the existence of weak solutions to the primitive equations for initial data in $L^2$ 
is known since the pioneering work by Lions, Temam and Wang \cite{Lionsetall1992}, its uniqueness remains an open problem until today. Li and Titi \cite{LiTiti2015} proved  uniqueness of weak solutions 
assuming that the initial data are small $L^\infty$-perturbations of continuous data or data belonging to $\{v\in L^6 \colon \partial_z v\in L^2\}$, where $z$ denotes the vertical variable. 
By a weak-strong uniqueness argument, these unique weak solutions regularize and even become strong solutions. For a survey of known results, see also \cite{LiTiti2016}.

A different  approach to the primitive equations is based on a semilinear evolution equation for the hydrostatic Stokes operator within the  $L^p$-setting, see \cite{HieberKashiwabara2015}. 
There, the existence of a unique, global, strong solution to the primitive equations for initial data belonging to $H^{2/p,p}$ was proved for the case of mixed Dirichlet-Neumann boundary conditions. 
This approach was transfered   in \cite{NeumannNeumann, GigaGriesHieberHusseinKashiwabara2017} to the case of pure Neumann boundary conditions and global,  
strong well-posedness of the primitive equations was obtained for data $a$ of the form 
$a=a_1 + a_2$, where $a_1\in C(\overline{G};L^1(-h,0))$ and $a_2\in L^{\infty}(G;L^1(-h,0))$ with $a_2$ being small. These spaces are scaling invariant and represent the anisotropic character of the 
primitive equations.

Note that the choice of boundary conditions has a severe impact on the linearized primitive equations. In the setting of layer domains, i.e., 
$\Omega= G\times (-h,0)\subset \R^3$ with $G= (0,1)^2$ and $h>0$, this is illustrated best by the hydrostatic Stokes operator $\Aipos$. The latter can be represented formally by the differential expression 
\begin{align}\label{eq:Ap}
	\Ae v = \Delta v + \frac{1}{h}\nabla_H (-\Delta_H)^{-1}\text{div}_H \Big(\restr{\partial_z v}{z=-h}\Big),
\end{align}
restricted to hydrostatically solenoidal vector fields, where for $z=-h$ Dirichlet and for $z=0$ Neumann boundary conditions are imposed and periodicity is assumed horizontally, see \cite{GGHHK17} for details. In particular, 
in the case of pure Neumann boundary conditions, the hydrostatic Stokes operator reduces to the Laplacian, i.e. $\Aipos v =\Delta v$.

It is the aim of this article to study properties of the hydrostatic Stokes semigroup and terms of the form $\nabla e^{t\Aipos} \mathbb{P}$ on spaces of bounded functions. 
These properties yield then the global, strong well-posedness result of the primitive equations in the case of mixed Dirichlet-Neumann boundary conditions.  More precisely, we prove global, strong well-posedness of the primitive 
equations for initial data  of the form 
$$
a=a_1 + a_2, \quad a_1\in C(\overline{G};L^p(-h,0)), \quad \mbox{and} \quad a_2\in L^{\infty}(G;L^p(-h,0)) \quad \mbox{for} \quad p>3,
$$
where $a_1$ is periodic in the horizontal variables and $a_2$ is sufficiently small. Our strategy is to introduce  a reference solution for the smoothened part of the initial data and to combine this 
with an evolution equation approach for the remaining rough part. 

The main difficulty when dealing with the primitive equations on spaces of bounded functions is that the hydrostatic Helmholtz projection $\mathbb{P}$ {\em fails to be bounded} with respect to 
the $L^\infty$-norm. This is similar to the case of the classical Stokes semigroup, for which $L^\infty$-theory was developed in \cite{AG13} and \cite{AGH15}. 

In Sections 6 and 7 we prove that the combination of the three main players, $\nabla$, $\mathbb{P}$, $e^{t\Aipos}$, nevertheless give rise to bounded operators on  
$L^\infty_HL^p_z(\Omega)$, which in addition satisfy typical global, second order parabolic decay estimates of the form
\begin{align*}
t^{1/2} \lVert \partial_i e^{t\Aipos}\mathbb{P}f \rVert_{L^\infty_H L^p_z(\Omega)} &\le C e^{t\beta} \lVert f \rVert_{L^\infty_H L^p_z (\Omega)}, \\
t^{1/2} \lVert e^{t\Aipos}\mathbb{P}\partial_j f  \rVert_{L^\infty_H L^p_z(\Omega)}& \le C e^{t\beta}\lVert f\rVert_{L^\infty_H L^p_z (\Omega)}, \\
t\lVert \partial_i e^{t\Aipos}\PP \partial_j f\rVert_{L^\infty_H L^p_z(\Omega)} &\le C e^{\beta t}\lVert f\rVert_{L^\infty_H L^p_z(\Omega)},
\end{align*}		
for $t>0$, where $\partial_i, \partial_j\in \{\partial_x,\partial_y,\partial_z\}$.

Note that the choice of the boundary conditions involved affects to a very great extent the difficulty in  proving these estimates. 
For the case of mixed Dirichlet-Neumann boundary conditions, our approach relies on the representation \eqref{eq:Ap} of the linearized problem. 
The constraint $p>3$ arises from embedding properties for the reference solution and estimates for the linearized problem in $L^{\infty}(G;L^p(-h,0))$. 

Our approach is based on an iteration scheme, which is  inspired by the classical schemes  to the Navier-Stokes equations. 
Here, the iterative construction of a unique, local solution relies on $L^\infty_HL^p_z(\Omega)$-estimates for the crucial terms of the 
form $e^{t\Aipos}\mathbb{P}\div (u \otimes v)$, where $u=(v,w)$ is the full velocity and $v$ its horizontal component.  
Let us note that the above linear estimates are of independent interest  for further considerations.

The use of a reference solution allows us to obtain the smallness condition on the $L^\infty_H L^p_z$-perturbation $a_2$ of $a_1$ by means of an absolute constant, while for Neumann boundary conditions 
it is needed that $a_2$ is small compared to $a_1$, cf.\@ \cite{NeumannNeumann}. Also, Li and Titi assume in \cite{LiTiti2015} that $a_2$ is small compared to the $L^4$-norm of $a_1$.

Comparing our result with the one by  Li and Titi in \cite{LiTiti2015}, which has been obtained for Neumann boundary conditions, we observe that the initial data allowed in our approach are of anisotropic 
nature and require no conditions on the derivatives of the initial data, such as e.g. $\partial_z v \in L^2$ as in \cite{LiTiti2015}.

This article is structured as follows: In Section~\ref{sec:pre} we collect preliminary facts and fix the notation. In Section~\ref{sec:main} we state our main results
concerning the global strong well-posedness of the primitive equations for rough data and the crucial estimates for the linearized problem. The proof of our main results starts with  a discussion of 
anisotropic $L^p$-spaces in Section~\ref{sec:spaces}, which is followed in Section~\ref{sec:laplace} by estimates for the Laplacian  in anisotropic spaces.  
The subsequent  Sections ~\ref{sec:stokesEasy} and \ref{sec:stokesHard} are devoted to the development of an $L^{\infty}(G;L^p(-h,0))$-theory for the hydrostatic Stokes equations and its 
associated resolvent problem. Finally, in Section~\ref{sec:proofs} we present our iteration scheme yielding the global, strong well-posedness of the primitive equations for rough initial data.

\section{Preliminaries}\label{sec:pre}

Let $\Omega=G\times(-h,0)$ where $G=(0,1)^2$. We consider the primitive equations on $\Omega$ given by
		\begin{align}\label{eq:PrimitiveEquations}
		\begin{array}{rll}
		\partial_t v-\Delta v+(u\cdot \nabla)v+\nabla_H \pi
		&=0 & \text{ on } \Omega\times(0,\infty),
		\\
		\partial_z \pi&=0 & \text{ on } \Omega\times (0,\infty),
		\\
		\text{div}_H \overline{v}&=0 & \text { on } G\times(0,\infty),
		\\
		v(0)&=a & \text { on } \Omega,
		\end{array}
		\end{align}
using the notations $\div_H v = \partial_x v_1+\partial_y v_2$ and $\nabla_H \pi = (\partial_x \pi, \partial_y \pi)^T$, while $\overline{v}=\frac{1}{h}\int_{-h}^0 v(\cdot,z)\,dz$ is the vertical average, $\pi\colon G\to \R$ denotes the surface pressure, $u=(v,w)$ is the velocity field with horizontal and vertical components $v:\Omega\to \R^2$ and $w:\Omega\to \R$ respectively, where $w=w(v)$ is given by the relation
		\begin{align}\label{eq:WRelation}
		w(x,y,z)=-\int_h^z \text{div}_H v(x,y,r)\,dr.
		\end{align}
This is supplemented by mixed Dirichlet and Neumann boundary conditions
		\begin{align}\label{eq:bc}
		\partial_z v =0  \text{ on } \Gamma_u\times(0,\infty), \quad
		\pi, v  \text { periodic }  \text{ on } \Gamma_l\times(0,\infty),  \quad
		v =0  \text{ on } \Gamma_b\times(0,\infty),
		\end{align}
where the boundary is divided into $\Gamma_u=G\times \{0\}$, $\Gamma_l=\partial G\times [-h,0]$ and $\Gamma_b=G\times\{0\}$. 

In the following we will be dealing with anisotropic $L^p$-spaces on cylindrical sets of the type $U= \Omega$ or $U=\R^2 \times \R$. More precisely, if $U=U'\times U_3\subset \R^2\times \R$ is a product of measurable sets and $q,p\in[1,\infty]$ we define
		\begin{align*}
		L^q_H L^p_z(U)
		:=L^q(U';L^p(U_3))
		:=\{f:U\to \K \text{ measurable}, \lVert f\rVert_{L^q_H L^p_z(U)}<\infty\},
		\end{align*}
for $\K\in\{\R,\C\}$ with norm
		\begin{align*}
		\lVert f\rVert_{L^q_H L^p_z(U)}:=\begin{cases}
		& \left(\int_{U'} \lVert f(x',\cdot)\rVert^q_{L^p(U_3)}\,dx'\right)^{1/q},
		\quad q\in [1,\infty),
		\\
		&
		\text{ess sup}_{x'\in U'} \lVert f(x',\cdot)\rVert_{L^p(U_3)}, \quad q=\infty.
		\end{cases}
		\end{align*}
Endowed with this norm, $L^q_H L^p_z(U)$ is a Banach space for all $p,q\in [1,\infty]$.

We will denote the $W^{k,p}$-closure of $C_{\text{per}}^\infty(\overline{\Omega})$ by $W^{k,p}_{\text{per}}(\Omega)$, 
where $C_{\text{per}}^\infty(\overline{\Omega})$ denotes the space of smooth functions $v$ on $\overline{\Omega}$ that such that $\partial^\alpha_x v$ and $\partial^\alpha_y v$ are periodic on $\Gamma_l$ with 
period $1$ in the variables $x$ and $y$ for all $\alpha \in \N$, but not necessarily periodic with respect to the vertical direction $z$. Moreover, by $C^{m,\alpha}(\overline{\Omega})$, $C^{m,\alpha}(\overline{G})$ we denote the spaces of 
$m$-times differentiable functions with H\"older-continuous derivatives of exponents $\alpha\in (0,1)$ and the subspaces of functions periodic on $\Gamma_l$ and $\partial G$ will be denoted 
by $C^{m,\alpha}_{\text{per}}(\overline{\Omega})$ and $C^{m,\alpha}_{\text{per}}(\overline{G})$, respectively. For a Banach space $E$ we denote by $C_{\text{per}}([0,1]^2;E)$ the set of 
continuous functions $f:[0,1]^2\to E$ such that $f(0,y)=f(1,y)$ and $f(y,0)=f(y,1)$ for all $x,y\in [0,1]$.

In order to include the condition $\div_H \overline{v}=0$ one defines the \textit{hydrostatic Helmholtz projection} $\PP$ as in \cite{HieberKashiwabara2015, GGHHK17} using the two-dimensional Helmholtz projection $Q$ with periodic boundary conditions given by $Qg=g-\nabla_H \pi$ for $g:G\to \R^2$ solving $\Delta_H \pi = \text{div}_H g$ for $\pi$ periodic on $\partial G$,
where $\Delta_H g=\partial_x^2 g+\partial_y^2 g$. The hydrostatic Helmholtz projection is then defined as 
\begin{align*}
\PP f=f-(1-Q)\overline{f}= f + \frac{1}{h}\nabla_H (-\Delta_H)^{-1}\text{div}_H\overline{f}  = f-\nabla_H \pi.
\end{align*}
		%
 The range of $\PP\colon L^p(\Omega)^2\rightarrow L^p(\Omega)^2$, $p\in (1,\infty)$, is denoted by $\lpso$ and is given by 
		\[
		\overline{\{v\in C_{\text{per}}^\infty(\overline{\Omega})^2: \text{div}_H \overline{v}=0\}}^{\norm{\cdot}_{L^p(\Omega)}}.
		\]
Further characterizations of $\lpso$ are given in \cite[Proposition 4.3]{HieberKashiwabara2015}.


Since $\mathbb{P}$ fails to be bounded on $L^\infty(\Omega)^2$ it is not evident which space  
is a suitable substitute for $\lpso$ in the case $p=\infty$. In this article, we will be considering the spaces
		\begin{align}\label{eq:Xpspaces}
		\Xip:=C_{\text{per}}([0,1]^2;L^p(-h,0))^2
		\quad \hbox{and} \quad
		\Xipos:=\Xip\cap \lpso, \quad p\in (1,\infty).
		\end{align}
		
The linearization of equation \eqref{eq:PrimitiveEquations}, called the \textit{hydrostatic Stokes equation}, is  given by 
		\begin{align}\label{eq:hydrostaticStokes}
		\partial_t v - \Delta v + \nabla_H \pi =f, \quad \div_H \overline{v}=0, \quad v(0)=a
		\end{align}
and subject to boundary conditions \eqref{eq:bc}. The dynamics of this evolution equation is governed by the hydrostatic Stokes operator, and its $\Xipos$-realization $\Aipos$ 
is given by
	\begin{align*}
		\Aipos v := \Ae v, \quad D(A^{\os}_\infty)
		=\{
		v\in W^{2,p}_{\text{per}}(\Omega)^2\cap \Xipos: 
		\restr{\partial_z v}{\Gamma_u}=0, 
		\restr{v}{\Gamma_b}=0, 
		\Ae v\in \Xipos
		\},
	\end{align*}
where $\Ae v$ is defined by \eqref{eq:Ap}. It wil be proved that $\Aipos$ generates a strongly continuous, analytic semigroup $e^{t\Aipos}$ on $\Xipos$. 
Information on the linear theory in $\lpso$ for $p\in (1,\infty)$ can be found in \cite{GGHHK17}. 

\section{Main results}\label{sec:main}

Our first main result concerns the global well-posedness of the primitive equations for \textit{arbitrarily large} initial data in $\Xipos$, while the second result extends this situation to the 
case of small perturbations in $L^\infty_H L^p_z(\Omega)$. Here, a \textit{strong solution} means -- as in \cite{HieberKashiwabara2015} -- a solution $v$ to the primitive equations satisfying
		\begin{align}\label{eq:StrongSolution}
		v\in C^1((0,\infty);L^p(\Omega))^2\cap C((0,\infty);W^{2,p}(\Omega))^2.
		\end{align}
Our third main result concerns $L^\infty_HL^p_Z$-estimates for the hydrostatic Stokes semigroup.  These estimates are essential for proving the above two results on the non-linear problem. They are also of 
independent interest.

\begin{theorem}\label{MainTheorem}
Let $p\in (3,\infty)$. Then for all $a\in \Xipos$ there exists a unique, global, strong solution $v$ to the primitive equations \eqref{eq:PrimitiveEquations} with $v(0)=a$ satisfying 
		\[
		v\in C([0,\infty);\Xipos),
		\quad
		t^{1/2}\nabla v\in L^\infty((0,\infty);\Xip), 
		\quad
		\limsup_{t\to 0+}t^{1/2}\lVert \nabla v(t) \rVert_{L^\infty_H L^p_z(\Omega)}=0.
		\]
The corresponding pressure satisfies
		\[
		\pi \in C((0,\infty);C^{1,\alpha}([0,1]^2)),
		\quad \alpha\in (0,1-3/p)
		\]
and is unique up to an additive constant.
\end{theorem}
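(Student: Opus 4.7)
The plan is to realize $v$ as the fixed point of a Fujita–Kato-type iteration for the mild formulation
\begin{equation*}
v(t) = e^{t\Aipos} a - \int_0^t e^{(t-s)\Aipos} \PP \operatorname{div}(u\otimes v)(s)\,ds, \qquad u=(v,w(v)),
\end{equation*}
in the space
\begin{equation*}
S_T := \Bigl\{v \in C([0,T];\Xipos) : t^{1/2}\nabla v \in L^\infty((0,T);\Xip),\ \lim_{t\to 0^+} t^{1/2}\|\nabla v(t)\|_{\Xip} = 0\Bigr\},
\end{equation*}
equipped with $\|v\|_{S_T}:=\sup_{0<t<T}\|v(t)\|_{\Xip}+\sup_{0<t<T} t^{1/2}\|\nabla v(t)\|_{\Xip}$. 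First I would verify that $t\mapsto e^{t\Aipos}a$ belongs to $S_T$: boundedness is immediate from the first two linear estimates stated in the introduction and proved in Sections~\ref{sec:stokesEasy}--\ref{sec:stokesHard}, while the vanishing of $t^{1/2}\|\nabla e^{t\Aipos}a\|_{\Xip}$ as $t\to 0^+$ follows from approximating $a\in\Xipos$ by elements of $D(\Aipos)$ and a standard $\varepsilon/3$-argument combined with boundedness of the semigroup.

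The core of the proof is the bilinear estimate on $B(v,\tilde v)(t) := \int_0^t e^{(t-s)\Aipos}\PP\operatorname{div}(u_v\otimes \tilde v)(s)\,ds$, where I rewrite $(u\cdot\nabla)v=\operatorname{div}(u\otimes v)$ using $\operatorname{div} u = \operatorname{div}_H v + \partial_z w = 0$. The second linear estimate yields
\begin{equation*}
\|B(v,\tilde v)(t)\|_{\Xip} \le C\int_0^t (t-s)^{-1/2}\|u_v\otimes \tilde v(s)\|_{\Xip}\,ds,
\end{equation*}
and $\|u_v\otimes \tilde v(s)\|_{\Xip}\le \|u_v(s)\|_{L^\infty_HL^\infty_z(\Omega)}\|\tilde v(s)\|_{\Xip}$ by H\"older in $z$. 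The sup-norm of $u_v$ is controlled by two ingredients: the pointwise bound $\|w(s)\|_{L^\infty_HL^\infty_z}\le h^{1/p'}\|\nabla_H v(s)\|_{\Xip}$ derived from \eqref{eq:WRelation}, and the slice-wise Sobolev embedding $W^{1,p}(-h,0)\hookrightarrow L^\infty(-h,0)$ (valid for $p>1$), giving $\|v(s)\|_{L^\infty_HL^\infty_z}\lesssim \|v(s)\|_{\Xip}+\|\partial_z v(s)\|_{\Xip}$. For the weighted gradient norm I would split the Duhamel integral at $s=t/2$: on $[0,t/2]$ the third estimate $\|\partial_i e^{(t-s)\Aipos}\PP\partial_j f\|_{\Xip}\le C(t-s)^{-1}\|f\|_{\Xip}$ is applied to $f=u\otimes v$, where $(t-s)\ge t/2$ tames the otherwise non-integrable singularity, while on $[t/2,t]$ the first estimate $\|\partial_i e^{(t-s)\Aipos}\PP f\|_{\Xip}\le C(t-s)^{-1/2}\|f\|_{\Xip}$ is applied to $f=(u\cdot\nabla)v$ bounded by $\|u\|_{L^\infty_HL^\infty_z}\|\nabla v\|_{\Xip}$. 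The resulting Beta-type integrals are finite precisely because $p>3$ is subcritical for the $\Xip$-scaling, and combining this with the smallness of $\sup_{0<t<T} t^{1/2}\|\nabla e^{t\Aipos}a\|_{\Xip}$ for small $T$, provided by the $\limsup$ condition, the contraction mapping theorem on a suitable two-parameter ball in $S_T$ yields a unique local mild solution for \emph{arbitrarily large} $a\in\Xipos$. The main obstacle is precisely this bilinear step, which relies decisively on the $L^\infty_HL^p_z$-mapping properties of $\nabla e^{t\Aipos}\PP$ proved in Sections~\ref{sec:stokesEasy}--\ref{sec:stokesHard}, a non-trivial fact because $\PP$ itself fails to be bounded on $L^\infty$.

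To promote the local solution to a global strong one, I invoke the $L^p$-theory of \cite{HieberKashiwabara2015}: parabolic smoothing of $e^{t\Aipos}$ inside the interval of existence places $v(t_0)$ in $D(\Aipos)\hookrightarrow W^{2,p}(\Omega)^2\hookrightarrow H^{2/p,p}(\Omega)^2$ for each $t_0\in(0,T)$, and the global $H^{2/p,p}$-well-posedness from that paper extends $v$ uniquely to a solution in the class \eqref{eq:StrongSolution}, matching the local solution on $[0,T]$ by $L^p$-uniqueness. The surface pressure is recovered for each $t>0$ from the vertically averaged momentum equation by applying the two-dimensional Helmholtz decomposition:
\begin{equation*}
\nabla_H\pi(t) = (I-Q)\bigl(\Delta_H \overline v - \tfrac{1}{h}\partial_z v|_{z=-h} - \overline{(u\cdot\nabla)v}\bigr)(t).
\end{equation*}
Solving the resulting two-dimensional Poisson problem on $G$ gives $\pi\in C((0,\infty);C^{1,\alpha}([0,1]^2))$ for $\alpha\in(0,1-3/p)$; the exponent $1-3/p = 1-1/p-2/p$ arises by combining the trace estimate $\partial_z v|_{z=-h}\in W^{1-1/p,p}(G)$ with the two-dimensional Sobolev embedding $W^{1-1/p,p}(G)\hookrightarrow C^{0,\alpha}(\overline G)$, and uniqueness of $\pi$ up to an additive constant is inherent to the two-dimensional Helmholtz decomposition on the flat torus.
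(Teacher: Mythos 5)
Your proposal correctly identifies the main analytical ingredients — the $L^\infty_H L^p_z$-mapping properties of $\nabla e^{t\Aipos}\PP$ from Sections~\ref{sec:stokesEasy}--\ref{sec:stokesHard}, the divergence form of the nonlinearity, the pointwise control of $u=(v,w)$ through $W^{1,p}(-h,0)\hookrightarrow L^\infty(-h,0)$ and \eqref{eq:WRelation}, the splitting of the Duhamel integral at $t/2$, and the globalization via parabolic smoothing into the $L^p$-framework of \cite{HieberKashiwabara2015}. However, the central step — closing the fixed-point argument for \emph{arbitrarily large} $a\in\Xipos$ — fails as written. Denote $H_v:=\sup_{0<t<T}\|v(t)\|_{\Xip}$ and $K_v:=\sup_{0<t<T}t^{1/2}\|\nabla v(t)\|_{\Xip}$. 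Because $\|u\|_{L^\infty(\Omega)}$ is necessarily controlled by $\|\nabla v\|_{\Xip}$ (the vertical component $w$ involves $\nabla_H v$, and boundedness in $z$ costs $\partial_z v$), every bilinear bound spends exactly one gradient, and the resulting recursion has the shape $H_{m+1}\lesssim H_0 + K_m H_m$ and $K_{m+1}\lesssim K_0 + K_m H_m + K_m^2$ with \emph{no} $T$-small prefactor: the Beta integrals $\int_0^t(t-s)^{-1/2}s^{-1/2}\,ds$ and $t^{1/2}\int_0^{t/2}(t-s)^{-1}s^{-1/2}\,ds$ are absolute constants. For the $K$-recursion to close at a small value of $K$ one needs $H_m\lesssim 1$, yet for the direct iteration $H_0\approx\|a\|_{\Xip}$, which is large by assumption. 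The smallness of $K_0=\sup_t t^{1/2}\|\nabla e^{t\Aipos}a\|_{\Xip}$ for short time, which you invoke, does not help: the cross term $K_m H_m$ in the $K$-recursion carries the large $H$. The same obstruction persists if you subtract the linear flow and iterate for $\tilde V:=v-e^{t\Aipos}a$, because the mixed term $B(\tilde V,e^{\cdot\Aipos}a)$ still contributes $\tilde K_m\cdot H_0$.

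The missing idea is the reference-solution decomposition that organizes the paper's Section~\ref{sec:proofs} (and is flagged in Remark~\ref{rem:main}d). One writes $a=a_{\text{ref}}+a_0$ with $a_{\text{ref}}\in D(\Aipos)$ — which is dense in $\Xipos$, so $\|a_0\|_{\Xip}$ can be made as small as one likes — takes the globally smooth solution $v_{\text{ref}}$ with data $a_{\text{ref}}$ from \cite{GigaGriesHieberHusseinKashiwabara2017} as a reference, and iterates for $V:=v-v_{\text{ref}}$. The nonlinearity then splits into a pure quadratic term in $V$ governed by the small quantity $\|a_0\|_{\Xip}$, and mixed terms $(U_m\cdot\nabla)v_{\text{ref}}$, $(u_{\text{ref}}\cdot\nabla)V_m$ in which $\nabla v_{\text{ref}}$ is bounded in $\Xip$ \emph{without} a $t^{-1/2}$ singularity; the latter therefore pick up a factor $RT^{1/2}$ (with $R:=\sup_{[0,T_0]}\|\nabla v_{\text{ref}}\|_{\Xip}$, finite by $X_\gamma\hookrightarrow C^1(\overline\Omega)^2$) which is made small by choosing $T$ small. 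Only then does the recursion in Lemma~\ref{RecursiveLemma} close. This decomposition is not a technical convenience but the mechanism that makes arbitrarily large data tractable in a framework where the bilinear estimate has no subcritical gain, and it is absent from your argument. The remaining steps you outline — regularization, extension to a global strong solution via \cite{HieberKashiwabara2015}, and the pressure regularity via the trace estimate for $\partial_z v|_{\Gamma_b}$ together with boundedness of $1-Q$ on $C^{0,\alpha}_{\text{per}}$ — are consistent with the paper.
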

%

%

%

\begin{theorem}\label{PerturbationTheorem}
Let $p\in (3,\infty)$. Then there exists a constant $C_0>0$ such that if $a=a_1+a_2$ with $a_1\in \Xipos$ and $a_2\in L^\infty_H L^p_z(\Omega)^2\cap \lpso$ with
		\[
		\lVert a_2\rVert_{L^\infty_H L^p_z(\Omega)}\le C_0,
		\]
then there exists a unique, global, strong solution $v$ to the primitive equations \eqref{eq:PrimitiveEquations} with $v(0)=a$ satisfying
		\[
		v\in C([0,\infty);\lpso)
		\cap L^{\infty}((0,T); L^\infty_H L^p_z(\Omega))^2
		\]
as well as 
		\[
		t^{1/2}\nabla v\in L^\infty((0,\infty);\Xip),
		\quad
		\limsup_{t\to 0+}t^{1/2}\lVert \nabla v \rVert_{L^\infty_H L^p_z(\Omega)}
		\le C \lVert a_2\rVert_{L^\infty_H L^p_z},
		\]
where $C>0$ does not depend on the data, and the pressure has the same regularity as in Theorem~\ref{MainTheorem}.	
\end{theorem}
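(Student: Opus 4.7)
The plan is to decompose $v = v_1 + v_2$, where $v_1$ is the global strong solution produced by Theorem~\ref{MainTheorem} applied to the smooth part $a_1 \in \Xipos$, and $v_2$ carries the rough perturbation $a_2$. Setting $u_i := (v_i, w(v_i))$ and subtracting the two primitive systems gives
\begin{equation*}
\partial_t v_2 - \Aipos v_2 = -\PP\bigl[(u_1\cdot\nabla)v_2 + (u_2\cdot\nabla)v_1 + (u_2\cdot\nabla)v_2\bigr], \qquad v_2(0) = a_2,
\end{equation*}
which, after recasting the convective terms in divergence form (using $\div u = 0$) and passing to mild form, becomes
\begin{equation*}
v_2(t) = e^{t\Aipos}a_2 - \int_0^t e^{(t-s)\Aipos}\PP\,\nabla\!\cdot\!\bigl(u_1\otimes v_2 + u_2\otimes v_1 + u_2\otimes v_2\bigr)(s)\,ds.
\end{equation*}

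The central step is to solve this fixed-point problem in a Kato-type ball inside
\begin{equation*}
Y_{T_*} := \Bigl\{ v_2 \in L^\infty\bigl(0,T_*;L^\infty_H L^p_z(\Omega)^2 \cap \lpso\bigr) : \sup_{0<t<T_*} t^{1/2}\|\nabla v_2(t)\|_{L^\infty_H L^p_z} < \infty \Bigr\}.
\end{equation*}
The three $L^\infty_H L^p_z$-estimates stated as the third main result of Section~\ref{sec:main}, in particular the divergence-form bound $t^{1/2}\|e^{t\Aipos}\PP\partial_j f\|_{L^\infty_H L^p_z} \le Ce^{\beta t}\|f\|_{L^\infty_H L^p_z}$ and its differentiated version $t\|\partial_i e^{t\Aipos}\PP\partial_j f\|_{L^\infty_H L^p_z} \le Ce^{\beta t}\|f\|_{L^\infty_H L^p_z}$, will drive the Duhamel estimate. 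The vertical component $w$ is controlled pointwise by $|w(v)(x,y,z)| \le \int_{-h}^{0}|\nabla_H v(x,y,r)|\,dr$, trading a $z$-integration for an $L^\infty_z$-norm, and the assumption $p>3$ enters through the anisotropic Sobolev embedding of Section~\ref{sec:spaces} needed to absorb a horizontal derivative into the pointwise norm of the reference solution.

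The main obstacle will be closing the contraction despite the possibly large coefficient $v_1$: the term $(u_1\cdot\nabla)v_2$ is linear in $v_2$ but with a coefficient that is not small. To overcome this I would exploit the regularity $v_1 \in C([0,\infty);\Xipos)$ with $t^{1/2}\nabla v_1 \in L^\infty(0,\infty;\Xip)$ furnished by Theorem~\ref{MainTheorem} and shrink $T_* = T_*(a_1)$ until the time integrals involving $v_1$ become contractive; the delicate cross term $(u_2\cdot\nabla)v_1$ is handled by pairing the $s^{-1/2}$-singularity of $\nabla v_1(s)$ near $s=0$ with the smoothing factor $(t-s)^{-1/2}$ produced by $e^{(t-s)\Aipos}\PP\partial_j$, giving a Beta-type kernel whose $L^1$-norm in $s$ vanishes as $T_* \downarrow 0$. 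The quadratic term is absorbed into the ball radius, which by the linear semigroup bound on $e^{t\Aipos}a_2$ is of order $\|a_2\|_{L^\infty_H L^p_z}$; choosing $C_0$ absolutely small -- independent of $a_1$, depending only on $p$ and $\Omega$ -- then closes the Banach contraction on $[0,T_*]$.

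Global existence follows from the parabolic smoothing: at any $t_0 \in (0,T_*]$ the solution $v(t_0) = v_1(t_0) + v_2(t_0)$ lies in $\Xipos$, since $v_1(t_0) \in \Xipos$ by construction and $v_2(t_0)$ acquires the required horizontal continuity from the representation $v_2(t_0) = e^{t_0\Aipos}a_2 + \cdots$, using that $e^{t\Aipos}$ maps $L^\infty_H L^p_z \cap \lpso$ into $\Xipos$ for $t>0$ (consequence of the resolvent analysis of Sections~\ref{sec:stokesEasy}--\ref{sec:stokesHard}). Applying Theorem~\ref{MainTheorem} to the shifted Cauchy problem with datum $v(t_0)$ then extends $v$ uniquely and globally on $[t_0,\infty)$, and concatenation yields the stated regularity and uniqueness on all of $[0,\infty)$. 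The asymptotic bound $\limsup_{t\to 0+}t^{1/2}\|\nabla v(t)\|_{L^\infty_H L^p_z} \le C\|a_2\|_{L^\infty_H L^p_z}$ is inherited from the mild representation, with the $v_1$-contribution vanishing by Theorem~\ref{MainTheorem} and the $v_2$-contribution controlled by the linear semigroup bound up to a quadratic correction of lower order in $\|a_2\|_{L^\infty_H L^p_z}$; the pressure $\pi$ is recovered from $v$ exactly as in Theorem~\ref{MainTheorem}.
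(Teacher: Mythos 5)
Your outline (reference solution plus Kato iteration for the rough part) is the right general framework, but there is a genuine gap in the way you split the data, and it affects precisely the part of the statement that says $C_0$ is an \emph{absolute} constant.

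You take the reference solution $v_1$ to be the solution furnished by Theorem~\ref{MainTheorem} for $a_1\in\Xipos$. That solution satisfies only $\|\nabla v_1(s)\|_{L^\infty_H L^p_z}=O(s^{-1/2})$ with the $\limsup$ vanishing at $0$, \emph{not} a uniform bound $\|\nabla v_1(s)\|\le R$. This makes the cross term $(u_2\cdot\nabla)v_1$ uncontrollable at the stated level of generality. Writing it in divergence form and applying the Duhamel bound from Theorem~\ref{thm:semigroup}(b)(ii), you get a contribution of the form
\[
\int_0^t (t-s)^{-1/2}\,\|\nabla v_2(s)\|_{L^\infty_H L^p_z}\,\|v_1(s)\|_{L^\infty_H L^p_z}\,ds
\;\le\; K_2(t)\,H_1(t)\int_0^t (t-s)^{-1/2}s^{-1/2}\,ds
\;=\;\pi\, K_2(t)\,H_1(t),
\]
which is linear in the iterate with coefficient $H_1(t)=\sup_{0<s<t}\|v_1(s)\|_{L^\infty_H L^p_z}$. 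This coefficient does \emph{not} vanish as $T_*\downarrow 0$, and it scales with $a_1$; so closing the contraction would force $\|a_2\|$ to be small \emph{relative to $a_1$}, not by an absolute $C_0$. Your claim that the ``Beta-type kernel'' has $L^1$-norm vanishing as $T_*\downarrow 0$ is simply false: $\int_0^t(t-s)^{-1/2}s^{-1/2}\,ds=\pi$ is scale-invariant. Using the differentiated bound $t\|\partial_i e^{tA}\PP\partial_j f\|\le C e^{\beta t}\|f\|$ instead produces a non-integrable $(t-s)^{-1}s^{-1/2}$ kernel, and the estimate with $\|\nabla v_1\|\,\|\nabla v_2\|$ on the right yields a non-integrable $(t-s)^{-1/2}s^{-1}$ kernel, so there is no way to trade singularities that keeps the linear coefficient small.

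The paper avoids this by inserting an extra approximation: it writes $a = a_{\mathrm{ref}} + a_0$ with $a_{\mathrm{ref}}\in D(\Aipos)$ close to $a_1$ and $a_0 := a_2 + (a_1 - a_{\mathrm{ref}})$. Since $a_{\mathrm{ref}}\in D(\Aipos)\subset X_\gamma\hookrightarrow C^1(\overline\Omega)^2$, the reference solution satisfies $R:=\sup_{0\le t\le T_0}\|\nabla v_{\mathrm{ref}}(t)\|_{L^\infty_H L^p_z}<\infty$ (a genuine uniform bound, no singularity at $t=0$). With that, the mixed terms produce a factor $R\,t^{1/2}$ rather than $H_1$, so the linear coefficient in the recursion is $C_1 R T_0^{1/2}$ and can be made $<1/4$ by shrinking $T_0$, independently of any smallness of $a_2$. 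Then one needs only $\|a_0\|\le \|a_2\| + \|a_1-a_{\mathrm{ref}}\|$ to be below an absolute threshold, which is achieved by first fixing the approximation $a_{\mathrm{ref}}$, yielding the stated absolute $C_0$. To repair your argument you should not set $v_1$ to be the solution for $a_1$ itself; instead, further approximate $a_1$ by a smooth $a_{\mathrm{ref}}\in D(\Aipos)$, fold the remainder $a_1-a_{\mathrm{ref}}$ into the rough part, and run the iteration against $v_{\mathrm{ref}}$ exactly as in the proof of Theorem~\ref{MainTheorem}. (The remaining ingredients you listed --- regularization at $t_0>0$, re-initialization in $\Xipos$, the $\limsup$ bound for $t^{1/2}\nabla v$, and the pressure regularity --- are fine once the contraction step is fixed.)
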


Taking advantage of the regularization of solutions for $t>0$ one passes into the setting discussed in  \cite{HieberKashiwabara2015} and \cite{GigaGriesHieberHusseinKashiwabara2017}, and thus we 
obtain the following corollary. 

\begin{corollary}\label{cor:smoothness}
For $t>0$ the solution $v, \pi$ in Theorem~\ref{MainTheorem} and Theorem~\ref{PerturbationTheorem} are real analytic in time and space, and the velocity $v$ decays exponentially as $t \to \infty$.
\end{corollary}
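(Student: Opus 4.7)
The plan is to exploit the parabolic smoothing guaranteed by Theorems~\ref{MainTheorem} and~\ref{PerturbationTheorem} and reduce Corollary~\ref{cor:smoothness} to results already available in the strong $L^p$-theory of the primitive equations developed in \cite{HieberKashiwabara2015} and \cite{GigaGriesHieberHusseinKashiwabara2017}. Fix $t_0>0$. The regularity $t^{1/2}\nabla v\in L^\infty((0,\infty);\Xip)$ combined with $v\in C([0,\infty);\Xipos)$ gives $v(t_0)\in W^{1,p}(\Omega)^2\cap \lpso$. Since $p>3$, Sobolev embedding yields $v(t_0)\in H^{2/p,p}(\Omega)^2\cap \lpso$, which is precisely the class of data covered in \cite{HieberKashiwabara2015}. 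By uniqueness of strong solutions in the class \eqref{eq:StrongSolution}, the trajectory constructed in this paper coincides on $[t_0,\infty)$ with the one furnished by that theory, and $v$ inherits from it all further regularity and decay properties established there.

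For real analyticity of $v$ I would invoke a Masuda--Kato / Komatsu complex-time argument in the spirit of \cite{HieberKashiwabara2015} and \cite{GigaGriesHieberHusseinKashiwabara2017}. The essential ingredients are (i) that $e^{z\Aipos}$ admits a bounded analytic extension to a sector in $\C$, which allows one to rerun the fixed-point iteration for $v$ with complex time parameter on a small disk around each $t>t_0$, and (ii) a Komatsu-type iterative bound on successive spatial derivatives; together these yield joint real analyticity of $v$ on $(t_0,\infty)\times\Omega$. Since $t_0>0$ was arbitrary, analyticity holds on $(0,\infty)\times\Omega$. Real analyticity of the surface pressure $\pi$ then follows from the two-dimensional elliptic identity $\Delta_H\pi=-\text{div}_H\overline{(u\cdot\nabla)v}$ on $G$ with periodic boundary conditions by standard elliptic regularity, since $\pi$ is a real-analytic function of $v$ and $\nabla v$.

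For the exponential decay I would use an $L^2$-energy estimate, which is licit for $t\ge t_0$ because $v(t)\in W^{1,p}(\Omega)^2\hookrightarrow H^1(\Omega)^2$. Testing \eqref{eq:PrimitiveEquations} against $v$, the pressure contribution vanishes thanks to $\text{div}_H\overline{v}=0$ and the $z$-independence of $\pi$, while the convective contribution vanishes from the boundary conditions \eqref{eq:bc} together with the divergence-free identity $\text{div}_H v+\partial_z w=0$ implied by \eqref{eq:WRelation}. This yields
\begin{equation*}
\tfrac{1}{2}\tfrac{d}{dt}\lVert v(t)\rVert_{L^2(\Omega)}^2+\lVert\nabla v(t)\rVert_{L^2(\Omega)}^2=0.
\end{equation*}
The Dirichlet condition $v|_{\Gamma_b}=0$ provides Poincar\'e's inequality $\lVert v\rVert_{L^2}\le C_P\lVert\nabla v\rVert_{L^2}$, and Gronwall produces $\lVert v(t)\rVert_{L^2(\Omega)}\le e^{-(t-t_0)/C_P^2}\lVert v(t_0)\rVert_{L^2(\Omega)}$ for $t\ge t_0$. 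This $L^2$-decay is then lifted to the $\Xipos$- and $W^{1,\infty}_H L^p_z$-norms by a short parabolic bootstrap using the smoothing estimates for $e^{t\Aipos}$ proved in Sections~\ref{sec:stokesEasy}--\ref{sec:stokesHard}.

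The step I expect to need the most care is the identification of $v(t_0)$ as an admissible starting datum for the analyticity theory, because the ambient spaces $\Xipos$ used in the present paper do not coincide with the $H^{2/p,p}$-framework of \cite{HieberKashiwabara2015}; once this identification is secured via \eqref{eq:StrongSolution} and strong uniqueness, both the analyticity assertion and the energy-based decay argument go through essentially by a citation and a standard computation, respectively.
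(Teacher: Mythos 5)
Your proposal follows essentially the same route as the paper, which disposes of the corollary in a single sentence: parabolic regularization for $t>0$ lands $v(t_0)$ in the strong $L^p$-setting, after which the analyticity of \cite{GigaGriesHieberHusseinKashiwabara2017} and the decay of \cite{HieberKashiwabara2015} apply. Your extra details are correct and fill in what the paper leaves implicit. Two small remarks: the inclusion $W^{1,p}(\Omega)\hookrightarrow H^{2/p,p}(\Omega)$ you use is a scale inclusion (valid since $2/p<1$) rather than a Sobolev embedding in the usual sense; and the paper's Step~4 of the proof of Theorem~\ref{MainTheorem} actually records the stronger statement $v(t_0)\in D((-\Apos)^{\vartheta})$ for all $\vartheta\in[0,1]$, obtained from the mild-solution integral via the $L^p$-smoothing of $e^{t\Apos}$, which makes the identification with the $D((-\Apos)^{1/p})$-framework of \cite{HieberKashiwabara2015} immediate and avoids having to discuss whether $H^{2/p,p}\cap\lpso$ is precisely the right space (it is, since $2/p<1+1/p$ so no boundary conditions enter). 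Your self-contained $L^2$-energy argument for decay, including the cancellations of the pressure and convective terms, is sound and is essentially what the cited references do internally; the paper simply delegates it.
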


Our main result on the hydrostatic semigroup acting on $\Xipos$ reads as follows.  

\begin{theorem}\label{thm:semigroup}
Let $p\in (3,\infty)$. Then the following assertions hold true: \\
a) $\Aipos$ is the generator of a strongly continuous, analytic and exponentially stable semigroup $e^{t\Aipos}$ on $\Xipos$  of angle $\pi/2$. \\
b) There exist constants $C>0$, $\beta>0$ such that for $\partial_i, \partial_j\in \{\partial_x,\partial_y,\partial_z\}$ 
\begin{align*}
\tag{i}	 t^{1/2}\lVert \partial_j e^{t\Aipos} f\rVert_{L^\infty_H L^p_z(\Omega)}
		&\le C e^{\beta t}\lVert f\rVert_{L^\infty_H L^p_z(\Omega)}, \quad t>0, \, f \in \Xipos,
		\\
	\tag{ii}	 t^{1/2}\lVert e^{t\Aipos}\PP \partial_j f\rVert_{L^\infty_H L^p_z(\Omega)}
		&\le C e^{\beta t}\lVert f\rVert_{L^\infty_H L^p_z(\Omega)}, \quad t>0, \, f \in \Xipos,\\
	\tag{iii}	 t\lVert \partial_i e^{t\Aipos}\PP \partial_j f\rVert_{L^\infty_H L^p_z(\Omega)}
		&\le C e^{\beta t}\lVert f\rVert_{L^\infty_H L^p_z(\Omega)}, \quad t>0, \, f \in \Xipos;
		\end{align*}
\item[c)] 
For all $f\in \Xipos$ 
\begin{align*} 
		\lim_{t \to 0+} t^{1/2}\lVert \nabla e^{t\Aipos}f\rVert_{L^\infty_H L^p_z(\Omega)}=0.
		\end{align*}
\end{theorem}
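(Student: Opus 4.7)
The plan is to derive Theorem~\ref{thm:semigroup} by combining the explicit decomposition \eqref{eq:Ap} of the hydrostatic Stokes operator with the $L^\infty_H L^p_z$-theory for the Laplacian in the mixed-boundary-condition setting that is developed in Sections~\ref{sec:laplace}, \ref{sec:stokesEasy} and \ref{sec:stokesHard}, together with the $L^p_{\overline{\sigma}}$-theory of $\Apos$ from \cite{GGHHK17}. The leading idea is to write $\Aipos = \Delta_{DN} + B$, where $\Delta_{DN}$ denotes the Laplacian on $L^\infty_H L^p_z(\Omega)^2$ with Dirichlet conditions at $\Gamma_b$, Neumann conditions at $\Gamma_u$ and horizontal periodicity, and where $B v := \tfrac{1}{h}\nabla_H(-\Delta_H)^{-1}\mathrm{div}_H(\partial_z v|_{z=-h})$ is the correction term coming from the hydrostatic Helmholtz projection.

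For part~(a), I would use that Section~\ref{sec:laplace} provides $\Delta_{DN}$ with a bounded analytic semigroup of angle $\pi/2$ on $L^\infty_H L^p_z(\Omega)$, together with the gradient estimate $t^{1/2}\|\partial_z e^{t\Delta_{DN}}f\|_{L^\infty_H L^p_z}\lesssim \|f\|_{L^\infty_H L^p_z}$. Since $Bv$ depends only on the horizontal variables, the $L^\infty_H L^p_z$-norm of $Bv$ reduces to an $L^\infty(G)$-norm of a 2D Riesz-type multiplier applied to the trace $\partial_z v|_{z=-h}$; using the fiberwise $L^p$-boundedness of $\nabla_H(-\Delta_H)^{-1}\mathrm{div}_H$ on the torus plus a Sobolev embedding in $G$ (where the restriction $p>3$ enters to control $L^\infty(G)$), one shows that $B$ is $\Delta_{DN}$-bounded of order strictly less than one and that $Be^{t\Delta_{DN}}$ has operator norm bounded by $Ct^{-1/2}$. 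A Duhamel/Neumann-series construction then yields the resolvent of $\Aipos$ on a sector, analyticity of angle $\pi/2$, and invariance of $\Xipos$ (since horizontal continuity and periodicity are preserved by both $e^{t\Delta_{DN}}$ and $B$). Exponential stability follows from the Poincaré inequality implied by the Dirichlet condition at $\Gamma_b$, transferred from the $L^p_{\overline{\sigma}}$-result via the embedding $\Xipos \hookrightarrow \lpso$ and spectral considerations.

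For part~(b), the main obstacle is that $\PP$ is not bounded on $L^\infty$, so estimates (ii) and (iii) cannot be split as $\|e^{t\Aipos}\|\cdot\|\PP\partial_j f\|$. I would instead absorb $\PP$ into the semigroup by writing $\PP\partial_j f = \partial_j f - \tfrac{1}{h}\nabla_H(-\Delta_H)^{-1}\mathrm{div}_H\overline{\partial_j f}$ and analyzing the two pieces separately. Estimate (i) follows from the analogous bound for $e^{t\Delta_{DN}}$ in Section~\ref{sec:laplace} combined with the Duhamel expansion in $B$ of part~(a). For (ii), the first piece $e^{t\Aipos}\partial_j f$ is converted into $\partial_j e^{t\Aipos}f$ (plus a boundary contribution when $j=z$ that is controlled by the trace estimates) so that part~(i) applies; the second piece is treated by commuting $\partial_j$ past the vertical average (producing either zero, when $j\in\{x,y\}$, handled by the 2D $L^p$-boundedness of the Riesz multiplier and Sobolev embedding into $L^\infty(G)$, or a boundary trace when $j=z$, handled by Section~\ref{sec:laplace}'s trace inequalities). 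Estimate (iii) is then obtained by splitting $e^{t\Aipos}=e^{(t/2)\Aipos}e^{(t/2)\Aipos}$ and iterating (i) and (ii), which gives the sharp $t^{-1}$-decay.

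For part~(c), the strategy is density plus uniform boundedness. On the dense subspace $D(\Aipos)\cap\Xipos$, strong continuity of the semigroup and closedness of $\partial_j$ give $\nabla e^{t\Aipos}f\to \nabla f$ in $\Xip$ as $t\to 0+$, so $t^{1/2}\|\nabla e^{t\Aipos}f\|_{L^\infty_H L^p_z}\to 0$. Using the uniform bound $t^{1/2}\|\nabla e^{t\Aipos}\|_{\Xipos\to L^\infty_H L^p_z}\le C$ from (b)(i), a standard $3\varepsilon$-argument extends the vanishing to arbitrary $f\in\Xipos$. The hardest part of the whole program will be the careful analysis in part~(b), specifically controlling the pressure correction $B$ and the trace term appearing in $\PP\partial_z$ simultaneously in the anisotropic $L^\infty_H L^p_z$-norm; this is where the constraint $p>3$ becomes essential, since it ensures the 2D Sobolev embedding $W^{1,p}(G)\hookrightarrow L^\infty(G)$ needed to convert $L^p$-estimates for the horizontal Riesz multipliers into $L^\infty_H$-estimates.
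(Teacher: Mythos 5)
Your plan for part~(a) and the horizontal-derivative portion of part~(b) essentially matches the paper: Section~\ref{sec:stokesEasy} likewise writes $\Aip$ on $\Xip$ as the Laplacian $\Delta_{DN}$ plus the lower-order correction $Bv=\tfrac1h(1-Q)\partial_zv|_{\Gamma_b}$, controls $B$ via $W^{2,p}(\Omega)\hookrightarrow C^{1,\alpha}(\overline\Omega)$ for $p>3$ and the $L^p$-resolvent estimate for $A_p$, and then invokes the anisotropic Laplace resolvent estimates of Section~\ref{sec:laplace}. The treatment of $\PP$ via the split $\PP f=\tilde f+Q\overline f$ and the tensor-product structure $e^{t\Delta}=e^{t\Delta_H}\otimes e^{t\Delta_z}$ together with Lemma~\ref{LemmaHorizontalSemigroup} is also in the spirit of what you describe, and your density-plus-uniform-bound proof of part~(c) is the paper's argument.

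The genuine gap is your treatment of the \emph{vertical}-derivative cases in parts~(b)(ii) and~(b)(iii). You propose to reduce $e^{t\Aipos}\partial_z f$ to $\partial_z e^{t\Aipos}f$ ``plus a boundary contribution \ldots controlled by trace estimates.'' This fails at the outset: $\partial_z$ does not commute with $\Aipos$ (the mixed Dirichlet--Neumann conditions at $\Gamma_b$ and $\Gamma_u$ are not preserved), and no trace inequality in Section~\ref{sec:laplace} controls the error. Likewise, the estimate for $\partial_z e^{t\Aipos}\PP f$ cannot be extracted from the horizontal machinery: once $\PP$ produces the pressure correction $Q\overline f$, the resulting part $v_2$ of the solution has no $t^{-1/2}$-smallness of $\partial_z v_2$ built in from Lemma~\ref{LemmaHorizontalSemigroup}, which only governs horizontal gradients. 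The paper devotes the entire Section~\ref{sec:stokesHard} to this: it splits $v=v_1+v_2$ with the pressure decomposed accordingly, introduces the anisotropic interpolation inequality (Lemma~\ref{lem: anisotropic interpolation inequality}), proves a near-optimal local $L^p$-estimate for $\nabla_H(-\Delta_H)^{-1}\mathrm{div}_H$ with the unavoidable logarithmic factor $(1+|\log r|)$ (Proposition~\ref{prop: nearly-optimal estimate for laplacian}), and runs a localization argument with vertical cut-offs $\alpha_r,\beta_r$ to absorb the boundary layer. For $e^{t\Aipos}\PP\partial_z$, the paper further resorts to a duality argument with a regularized delta function and an $L^1_HL^q_z$-estimate for the dual problem (Propositions~\ref{thm: 2nd vertical derivative estimate}--\ref{prop: L1Lq estimate for dual problem}). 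Your proposal does not contain any of these ideas, and without them the vertical estimates in~(b) do not follow.

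One thing you got right that is worth noting: the claim that~(iii) follows from~(i) and~(ii) by splitting $e^{t\Aipos}=e^{(t/2)\Aipos}e^{(t/2)\Aipos}$ is exactly how the paper closes the argument, once the vertical versions of~(i) and~(ii) are in hand.
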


\begin{remarks}\label{rem:main}
a) We note that when in the situation of Theorem~\ref{PerturbationTheorem} the initial data do not belong to $\Xip$, i.e. when $a_2\neq 0$, the solution fails to be continuous at $t=0$ with respect 
to the $L^\infty_H L^p_z$-norm. \\
b) The condition $p>3$ is due to the embeddings 		
		\[
		v_{\text{ref}}(t_0)
		\in B^{2-2/q}_{pq}(\Omega)^2
		\hookrightarrow C^1(\overline{\Omega})^2 \quad \hbox{and} \quad  W^{2,p}(\Omega) \hookrightarrow C^{1,\alpha}(\overline{\Omega})
		\quad \hbox{for } p\in (3,\infty),
		\]
cf. \cite[Section 3.3.1]{Triebel}. Since the two-dimensional Helmholtz projection $Q$ fails to be bounded with respect to the $L^\infty$-norm, we instead estimate it in spaces of H{\"o}lder continuous functions $C^{0,\alpha}_{\text{per}}([0,1]^2)=C^{0,\alpha}(\mathbb{T}^2)$ for $\alpha \in (0,1)$ where $\mathbb{T}^2$ denotes the two-dimensional torus.  
In fact $Q$ is bounded with respect to the $C^{0,\alpha}$-norm. This follows by the theory of Fourier multipliers on Besov spaces, compare e.g. \cite[Theorem 6.2]{Amann1997} for the whole space, and the periodic case follows using periodic extension. 
%
\\
c) In Theorem~\ref{thm:semigroup} one can even consider $f\in L^{\infty}_HL^p_z(\Omega)^2$ for $p\in (3,\infty)$. Then the corresponding semigroup is still analytic, but it fails to be strongly continuous. The estimates $(i)-(iii)$ still hold, whereas property $c)$ in Theorem~\ref{thm:semigroup} has to be replaced by   
			\begin{align*} 
			\limsup_{t \to 0+} t^{1/2}\lVert \nabla e^{t\Aipos}v\rVert_{L^\infty_H L^p_z(\Omega)} \leq C \lVert v\rVert_{L^\infty_H L^p_z(\Omega)}
			\end{align*}
			for some $C>0$, where with a slight abuse of notation $e^{t\Aipos}$ denotes also the hydrostatic  Stokes semigroup on $L^\infty_H L^p_z(\Omega)$.  
\\			
d ) Some words about our strategy for proving the global well-posedness results are in order:
\begin{itemize}
\item[(i)] We will first construct a local, mild solution to the problem \eqref{eq:PrimitiveEquations}, i.e. a function 
satisfying the relation
		\begin{align}\label{eq:mildsolution}
		v(t)=e^{t\Aipos}a+\int_0^t e^{(t-s)\Aipos}\PP F(v(s))\,ds,
		\quad t\in (0,T)
		\end{align}
for some $T>0$, where $F(v)=-(u\cdot\nabla) v$. We will then show that $v$ regularizes for $t_0>0$ and using the result of \cite[Theorem 6.1]{HieberKashiwabara2015} or 
\cite[Theorem 3.1]{GigaGriesHieberHusseinKashiwabara2017}, we may 
take $v(t_0)$ as a new initial value to extend the mild solution to a global, strong solution 
on $(t_0,\infty)$ and then on $(0,\infty)$ by uniqueness. The additional regularity for $t\to 0+$ results form the construction of the mild solutions. 
\item[(ii)] In order to construct a mild solution 
we decompose $a=a_{\text{ref}}+a_0$ such 
that $a_{\text{ref}}$ is sufficiently smooth and $a_0$ can be taken to be arbitrarily small.
\item[(iii)]Using previously established results concerning the existence of solutions to the primitive equations for {\em smooth} data, we obtain a reference solution $v_{\text{ref}}$ and 
construct then $V:=v-v_{\text{ref}}$ via an iteration scheme using $L^\infty$-type estimates for terms of the form $\nabla e^{t\Aipos}\mathbb{P}$ given in Theorem~\ref{thm:semigroup}.
\end{itemize}
\end{remarks}

\section{Properties of anisotropic spaces}\label{sec:spaces}

In this section, we will discuss properties of anisotropic $L^q$-$L^p$-spaces. We will write $C(U';L^p(U_3))$ for the set of continuous $L^p(U_3)$-valued functions on $U'$ and likewise
		\[
		L^q(U';C(U_3))
		:=\{ f\in L^q_H L^\infty_z(U): 
		f(x',\cdot)\in C(U_3) \text{ for almost all } x'\in U'
		\},
		\]
and $C_c(U';L^p(U_3))$ and $L^q(U';C_c(U_3))$ for the subsets of functions with compact support in horizontal and vertical variables, respectively. For $p,q\in[1,\infty)$ the space  
$C^\infty_c(U)$ is dense in these spaces as well as in $L^q_H L^p_z(U)$, and furthermore we have
		\begin{align*}
		\overline{C^\infty_c(\R^3)}^{\norm{\cdot}_{L^\infty_H L^p_z}} 
		= C_0(\R^2;L^p(\R)),  
		\quad 
		\overline{C^\infty_c(\R^3)}^{\norm{\cdot}_{L^q_H L^\infty_z}} 
		= L^q(\R^2;C_0(\R)), 
		\end{align*}
as well as
		\begin{align*}
		\overline{C_{\text{per}}^\infty(\overline{\Omega})^2}
		^{\norm{\cdot}_{L^\infty_H 	L^p_z}} 
		= \Xip,
		\quad 
		\overline{C_{\text{per}}^\infty(\overline{\Omega})}
		^{\norm{\cdot}_{L^q_H L^\infty_z}} 
		= L^q(G;C[-h,0]).
		\end{align*} 
Observe thst even $C^\infty_{\text{per}}([0,1]^2;C^\infty_c(-h,0))^2$ is dense in $\Xip$ and $L^q_H L^p_z(\Omega)^2$. If $p=q=\infty$, then  
		\[
		\overline{C^\infty_c(\R^3)}^{\norm{\cdot}_{L^\infty_H L^\infty_z}} 
		= C_0(\R^3),
		\quad \overline{C_{\text{per}}^\infty(\overline{\Omega})}^{\norm{\cdot}_{L^\infty_H L^\infty_z}} 
		= C_\text{per}([0,1]^2;C[-h,0]).
		\]
Here $C_0(\R^d)$ denotes the set of functions vanishing at infinity.
These density results follow from the fact that if $E$ is a Banach space over $\K\in \{\R,\C\}$, then the linear space generated by elementary tensor functions $f\otimes e$ for 
measurable $f:U'\to \K$ and $e\in E$ is dense in $L^q(U';E)$ for $q\in [1,\infty)$, since it contains the simple $E$-valued functions. It is also dense in $C_0(U';E)$, if one only 
considers continuous functions $f$, due to a generalization of the Stone-Weierstrass theorem, see e.g. \cite{Khan}. 

In the case that $U\subset \R^3$ is bounded, we also have
		\[
		L^{q_1}_H L^{p_1}_z(U)\hookrightarrow L^{q_2}_H L^{p_2}_z(U)
		\]
whenever $q_1\ge q_2$ and $p_1\ge p_2$. See \cite[Section 5]{HieberKashiwabara2015} for more details.

Another important property of the $L^q_H L^p_z$-norm is its behaviour  under operations like multiplication and convolution. For the former one, we  obviously obtain 
		\[
		\lVert fg\rVert_{L^q_H L^p_z(U)}
		\le \lVert f\rVert_{L^{q_1}_H L^{p_1}_z(U)} \lVert g\rVert_{L^{q_2}_H L^{p_2}_z(U)}
		\]
whenever $1/p_1+1/p_2=1/p$ and $1/q_1+1/q_2=1/q$. For the latter one, the following variant of Young's inequality holds true.

\begin{lemma}{\cite[Theorem 3.1]{GreySinnamon}}\label{YoungAnisotropic}. 
Let $f\in L^q_H L^p_z (\R^3)$ for $p,q\in [1,\infty]$ and  $g\in L^1(\R^3)$. Then 
		\begin{align*} 
		\lVert g\ast f\rVert_{L^q_H L^p_z (\R^3)}
		\le \lVert g\rVert_{L^1(\R^3)}
		\lVert f\rVert_{L^q_H L^p_z (\R^3)}.
		\end{align*}
\end{lemma}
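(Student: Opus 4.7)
The plan is to reduce this anisotropic Young inequality to the classical $L^1$-$L^q$ Young inequality on $\R^2$ by first using Minkowski's integral inequality and the translation invariance of the $L^p$-norm to collapse the vertical direction. Writing the convolution as
\[
(g\ast f)(x', x_3)
= \int_{\R^2}\int_{\R} g(y', y_3)\, f(x'-y',\, x_3-y_3)\,dy_3\,dy',
\]
I view it, for fixed horizontal variable $x'$, as an $L^p(\R)$-valued integral over $(y',y_3)\in\R^3$.

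The first step is to apply Minkowski's integral inequality with respect to the $L^p$-norm in $x_3$ and then invoke translation invariance, $\|f(x'-y',\,\cdot - y_3)\|_{L^p(\R)} = \|f(x'-y',\,\cdot)\|_{L^p(\R)}$, to obtain the pointwise estimate
\[
\|(g\ast f)(x',\cdot)\|_{L^p(\R)}
\le \int_{\R^2} G(y')\, F(x'-y')\,dy'
= (G\ast_H F)(x'),
\]
where $G(y'):=\int_{\R}|g(y',y_3)|\,dy_3$ and $F(x'):=\|f(x',\cdot)\|_{L^p(\R)}$, and $\ast_H$ denotes horizontal convolution on $\R^2$. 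Note that $\|G\|_{L^1(\R^2)} = \|g\|_{L^1(\R^3)}$ by Fubini, and $\|F\|_{L^q(\R^2)} = \|f\|_{L^q_H L^p_z(\R^3)}$ directly from the definition of the anisotropic norm.

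The second step is to take the $L^q(\R^2)$-norm of this pointwise bound and apply the classical $L^1$-$L^q$ Young inequality on $\R^2$, which yields
\[
\|g\ast f\|_{L^q_H L^p_z(\R^3)}
\le \|G\ast_H F\|_{L^q(\R^2)}
\le \|G\|_{L^1(\R^2)}\,\|F\|_{L^q(\R^2)}
= \|g\|_{L^1(\R^3)}\,\|f\|_{L^q_H L^p_z(\R^3)}.
\]

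The main obstacle I anticipate lies in the endpoint cases $p=\infty$ or $q=\infty$, where both Minkowski's integral inequality and the classical Young inequality require some care because essential suprema do not commute freely with integration, and joint measurability of the slice map $y'\mapsto \|f(x'-y',\cdot)\|_{L^p(\R)}$ must be verified. These technicalities can be handled by first establishing the estimate for $f,g\in C_c^\infty(\R^3)$, where Fubini applies without restriction and all measurability issues are trivial, and then passing to the limit by density together with Fatou's lemma in the regime $p,q<\infty$, while the remaining $\infty$-cases of Minkowski and Young reduce to straightforward pointwise triangle-inequality arguments.
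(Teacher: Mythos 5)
Your proof is correct and takes essentially the same route as the paper's own (commented-out) argument: both reduce the claim to the two-dimensional Young inequality $\lVert G\ast F\rVert_{L^q(\R^2)}\le\lVert G\rVert_{L^1}\lVert F\rVert_{L^q}$ after a Minkowski step that collapses the vertical variable and produces $F(x')=\lVert f(x',\cdot)\rVert_{L^p_z}$ and $G(y')=\lVert g(y',\cdot)\rVert_{L^1_z}$. The only minor difference is that the paper handles the case $q=\infty$, $p<\infty$ by a separate H\"older--Jensen computation, whereas your single Minkowski-based derivation treats all endpoints uniformly, which is a modest streamlining rather than a genuinely different approach.
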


\section{Linear estimates for the Laplace operator}\label{sec:laplace}
In this section we establish resolvent and semigroup estimates for Laplace operators with a focus on anisotropic $L^q_HL^p_z$-spaces, where $p,q\in [1,\infty]$. 
%

First, we consider the resolvent problem for the 
Laplacian on the full space for 
$$\lambda\in \Sigma_\theta=\{
		\lambda\in\C\setminus\{0\}:\lvert\text{arg}(\lambda)\rvert<\theta\}, \quad \theta\in (0,\pi),$$ 
		i.e.
		\begin{align}\label{eq:LaplaceResolventFullspace}
		\Delta v - \lambda v=f  \text{ on } \R^3, \quad f\in C^\infty_c(\R^3),
		\end{align}
and for $\partial_j\in\{\partial_x,\partial_y,\partial_z\}$
		\begin{align}\label{eq:LaplaceResolventDerivativeFullspace}
		\Delta w - \lambda w=\partial_j f  \text{ on } \R^3, \quad f\in C^\infty_c(\R^3).
		\end{align}
		%
It is well known that the solution to problem \eqref{eq:LaplaceResolventFullspace} is given by the convolution $v=K_\lambda \ast f$ and the one to problem \eqref{eq:LaplaceResolventDerivativeFullspace} by 
$v=\partial_j K_\lambda\ast f$, where $K_\lambda$ is explicitly given by
		\begin{align*}
		K_\lambda(x)
		=\frac{1}{4\pi}
		\frac{
		e^{
		-\lambda^{1/2}\lvert x\rvert
		}
		}{\lvert x\rvert}, 
		\quad x\in\R^3\setminus\{0\}.
		\end{align*}
Using this representation one easily obtains the following uniform $L^1(\R^3)$-estimates.

\begin{lemma}
For all $\theta\in(0,\pi)$ there exists $C_\theta>0$ such that for all $\lambda \in \Sigma_{\theta}$ one has
		\[
		\lvert \lambda \rvert \cdot  \lVert K_\lambda\rVert_{L^1(\R^3)}
		+\lvert \lambda \rvert^{1/2} \lVert \nabla K_\lambda\rVert_{L^1(\R^3)}
		\le C_\theta.
		\]
\end{lemma}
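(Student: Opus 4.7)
The plan is a direct calculation in spherical coordinates, exploiting the explicit form of $K_\lambda$ and careful tracking of how $\lambda^{1/2}$ behaves on the sector $\Sigma_\theta$.

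First I would fix the principal branch of the square root so that $\lambda^{1/2}\in \Sigma_{\theta/2}$ whenever $\lambda\in\Sigma_\theta$. Because $\theta<\pi$ implies $\theta/2<\pi/2$, this gives the crucial lower bound
\begin{equation*}
\Re(\lambda^{1/2})\geq |\lambda|^{1/2}\cos(\theta/2)=:c_\theta\,|\lambda|^{1/2}>0,
\end{equation*}
so that $K_\lambda$ has genuine exponential decay at infinity at rate $c_\theta|\lambda|^{1/2}$. This is the single observation that drives both estimates.

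Next I would compute $\|K_\lambda\|_{L^1(\R^3)}$. Passing to spherical coordinates $r=|x|$ the surface measure gives a factor $4\pi r^2$, which cancels the $1/|x|$ in $K_\lambda$ up to a factor of $r$, leading to
\begin{equation*}
\|K_\lambda\|_{L^1(\R^3)}=\int_0^\infty r\,e^{-\Re(\lambda^{1/2})r}\,dr=\frac{1}{\Re(\lambda^{1/2})^2}\leq \frac{1}{c_\theta^2\,|\lambda|},
\end{equation*}
which proves the first half of the claim.

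For the gradient, I would differentiate $K_\lambda$ in the radial direction to obtain the pointwise bound
\begin{equation*}
|\nabla K_\lambda(x)|\leq \frac{1}{4\pi}\left(\frac{|\lambda|^{1/2}}{|x|}+\frac{1}{|x|^2}\right)e^{-\Re(\lambda^{1/2})|x|}.
\end{equation*}
Integrating in spherical coordinates, the $r^2$ Jacobian yields
\begin{equation*}
\|\nabla K_\lambda\|_{L^1(\R^3)}\leq \int_0^\infty \bigl(|\lambda|^{1/2}r+1\bigr)e^{-c_\theta|\lambda|^{1/2}r}\,dr=\frac{|\lambda|^{1/2}}{c_\theta^2|\lambda|}+\frac{1}{c_\theta|\lambda|^{1/2}}\leq \frac{C_\theta}{|\lambda|^{1/2}},
\end{equation*}
which is the second half of the claim. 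There is no real obstacle here: the only subtle point is the uniformity in $\lambda\in\Sigma_\theta$, which is handled cleanly by the single sector estimate $\Re(\lambda^{1/2})\geq c_\theta|\lambda|^{1/2}$; everything else is an elementary one-dimensional integration.
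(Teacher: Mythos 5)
Your proof is correct and follows essentially the same approach as the paper: both compute the $L^1$-norms in spherical coordinates and exploit the exponential decay $\exp(-\Re(\lambda^{1/2})|x|)$ with $\Re(\lambda^{1/2})=|\lambda|^{1/2}\cos(\arg(\lambda)/2)\ge |\lambda|^{1/2}\cos(\theta/2)>0$ on the sector. The paper keeps $\cos(\psi/2)$ with $\psi=\arg(\lambda)$ through the computation and records the resulting $\sec(\psi/2)$ bounds, while you replace it with the uniform constant $c_\theta$ at the outset; this is a cosmetic difference, not a different argument.
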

\begin{proof}
Set $\psi:=\text{arg}(\lambda)\in (-\theta,\theta)$. Since $K_\lambda$ is radially symmetric we use spherical coordinates to obtain
		\[
		\int_{\R^3} \lvert K_\lambda(x)\rvert\,dx
		=\int_0^\infty r e^{-\lvert \lambda\rvert^{1/2}\cos(\psi/2)r}
		\,dr
		\]
as well as
		\[
		\int_{\R^3} \lvert \nabla K_\lambda(x)\rvert\,dx
		\le 
		\int_0^\infty 
		(1+\lvert \lambda\rvert^{1/2}r)e^{-\lvert \lambda\rvert^{1/2}\cos(\psi/2)r}
		\,dr.
		\]
So, $\lvert \lambda \rvert \cdot \lVert K_\lambda\rVert_{L^1(\R^3)}	=\text{sec}(\psi/	2)^2$ and  
$\lvert \lambda\rvert^{1/2} \lVert \nabla K_\lambda \rVert_{L^1(\R^3)}
		\le \sec(\psi/2)+\sec(\psi/2)^2$, and thus
we obtain the desired result. 				
		%
		%
\end{proof}

From this and Young's inequality for convolutions in anisotropic spaces, cf. Lemma~\ref{YoungAnisotropic}, one immediately obtains suitable $L^q_H L^p_z$-estimates for the resolvent problems \eqref{eq:LaplaceResolventFullspace} and \eqref{eq:LaplaceResolventDerivativeFullspace} for $q,p\in [1,\infty]$. 

\begin{corollary}\label{LemmaLaplaceResolventFullspace}
Let $\lambda \in \Sigma_{\theta}$ for some $\theta\in(0,\pi)$. Assume one of the following cases:
\begin{itemize}
\item[(i)] $p,q\in [1,\infty)$ and $f\in L^q_H L^p_z (\R^3)$, or
\item[(ii)] $p\in [1,\infty)$, $q=\infty$, and $f\in L^q_H L^p_z (\R^3)$ with compact support in horizontal direction, or
\item[(iii)] $p=\infty$, $q\in [1,\infty)$, and $f\in L^q_H L^p_z (\R^3)$ with compact support in vertical direction.
\end{itemize} 
Then the functions
\begin{align*}
 v=K_\lambda\ast f \quad \hbox{ and } \quad w=\partial_j K_\lambda\ast f
\end{align*}
are the unique solutions to the problems \eqref{eq:LaplaceResolventFullspace} and \eqref{eq:LaplaceResolventDerivativeFullspace} in $L^q_HL^p_z(\R^3)$, respectively, and
%
there exists a constant $C_\theta>0$ such that 
		%
		\begin{align}\label{eq:AnisotropicResolventFullspace}
		\lvert \lambda\rvert \cdot \lVert v\rVert_{L^q_H L^p_z (\R^3)}
		+\lvert \lambda\rvert^{1/2} \lVert \nabla v\rVert_{L^q_H L^p_z (\R^3)}
		+\lVert \Delta v\rVert_{L^q_H L^p_z (\R^3)}
		\le C_\theta \lVert f\rVert_{L^q_H L^p_z (\R^3)},\\
		%
		\lvert \lambda\rvert^{1/2} \lVert w\rVert_{L^q_H L^p_z (\R^3)}
		\le C_\theta \lVert f\rVert_{L^q_H L^p_z (\R^3)}.\label{eq:AnisotropicResolventDerivativeFullspace}
		\end{align}
\end{corollary}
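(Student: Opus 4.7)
My plan is to combine the pointwise convolution representations with the anisotropic Young inequality of Lemma~\ref{YoungAnisotropic} and the uniform $L^1(\R^3)$ bounds on $K_\lambda$ and $\nabla K_\lambda$ established in the preceding lemma. For $f\in C_c^\infty(\R^3)$ the convolutions $v=K_\lambda\ast f$ and $w=\partial_j K_\lambda\ast f$ are smooth, decay exponentially at infinity, and solve \eqref{eq:LaplaceResolventFullspace} and \eqref{eq:LaplaceResolventDerivativeFullspace} classically. Applying Lemma~\ref{YoungAnisotropic} first to $K_\lambda\ast f$ and then to $\nabla K_\lambda\ast f=K_\lambda\ast\nabla f$ yields the two central estimates
\begin{align*}
|\lambda|\,\lVert v\rVert_{L^q_HL^p_z}+|\lambda|^{1/2}\lVert\nabla v\rVert_{L^q_HL^p_z}&\le C_\theta\lVert f\rVert_{L^q_HL^p_z},\\
|\lambda|^{1/2}\lVert w\rVert_{L^q_HL^p_z}&\le C_\theta\lVert f\rVert_{L^q_HL^p_z},
\end{align*}
with the constant $C_\theta$ inherited from the previous lemma. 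The estimate on $\lVert\Delta v\rVert_{L^q_HL^p_z}$ is then free: rearrange the resolvent equation as $\Delta v=\lambda v+f$ and use the bound on $\lambda v$.

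To pass from $C_c^\infty(\R^3)$ to the general $f$ appearing in the three cases, I would use the density statements collected in Section~\ref{sec:spaces}. In case (i) with $p,q\in[1,\infty)$, $C_c^\infty(\R^3)$ is already dense in $L^q_HL^p_z(\R^3)$, so choose an approximating sequence $f_n\to f$, observe via the a priori bound that $v_n=K_\lambda\ast f_n$ is Cauchy in $L^q_HL^p_z$, and pass to the limit in the equation in the distributional sense. In cases (ii) and (iii), where $q=\infty$ or $p=\infty$, norm density of $C_c^\infty$ fails, but the compact-support hypothesis in the critical variable allows one to mollify $f$ in that variable and then in the remaining directions, producing a $C_c^\infty$ approximation that converges in the mixed norm because on the support the $L^\infty$-direction reduces to an $L^\infty$ of a compact set where standard mollifier convergence applies. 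Uniqueness in $L^q_HL^p_z(\R^3)$ follows from a Liouville-type argument: any distributional solution of $\Delta u-\lambda u=0$ in this space is smooth by elliptic regularity, and since $\Re\lambda^{1/2}>0$ for $\lambda\in\Sigma_\theta$, a Fourier/decay argument forces $u\equiv 0$.

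The main obstacle I anticipate lies in cases (ii) and (iii). Here the compact support hypothesis is indispensable: without it the convolution of $K_\lambda$ with a function that is only $L^\infty$ in one variable need not produce an element of the target space, and the approximation argument above breaks down because the mollified sequence cannot both sit in $C_c^\infty$ and converge in the mixed norm whose $L^\infty$-component has no compactness. Handling these two cases therefore requires carefully using the compact support assumption to localize $f$ in the $L^\infty$-direction, so that the pointwise integrals defining $K_\lambda\ast f$ converge absolutely and one may differentiate under the integral to verify the equation before removing the support restriction only in the orthogonal directions.
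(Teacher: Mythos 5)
Your overall strategy coincides with the paper's: establish the estimates for $f\in C_c^\infty(\R^3)$ using Young's inequality (Lemma~\ref{YoungAnisotropic}) together with the uniform $L^1$-bounds on $K_\lambda$ and $\nabla K_\lambda$, recover the bound on $\Delta v$ from the equation $\Delta v=\lambda v+f$, and then pass to general $f$. Your handling of case (i) and the Liouville-type uniqueness argument are sound.

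The gap lies in cases (ii) and (iii). You claim that the compact-support hypothesis in the $L^\infty$-direction yields a $C_c^\infty$ approximation converging in the mixed norm, ``because on the support the $L^\infty$-direction reduces to an $L^\infty$ of a compact set where standard mollifier convergence applies.'' That claim is false: mollification does not converge in $L^\infty$ on a compact set for discontinuous functions. If, for instance, $f(x',z)=\chi_{[0,1]^2}(x')\,g(z)$ with $g\in L^p(\R)$, every smooth approximation stays a uniform distance of at least $\tfrac12\lVert g\rVert_{L^p}$ away in $L^\infty_H L^p_z$. Indeed, as recorded in Section~\ref{sec:spaces}, the closure of $C_c^\infty(\R^3)$ in the $L^\infty_H L^p_z$-norm is $C_0(\R^2;L^p(\R))$, a proper subspace of $L^\infty_H L^p_z$ even after restricting to compact horizontal support, so the approximation argument as written cannot close.

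The repair is to avoid norm approximation in the $L^\infty$-direction entirely, which is what the paper's remark after the corollary implicitly does. The estimates \eqref{eq:AnisotropicResolventFullspace} and \eqref{eq:AnisotropicResolventDerivativeFullspace} follow for every $f\in L^q_HL^p_z(\R^3)$ directly from Lemma~\ref{YoungAnisotropic} and the $L^1$-bounds, with no density argument at all. The role of the compact-support hypothesis is only to justify the identity $\partial_j(K_\lambda\ast f)=(\partial_j K_\lambda)\ast f$ and the fact that $v$ and $w$ solve the respective equations in the distributional sense; for that one may mollify or truncate $f$ and pass to the limit in a topology weaker than the mixed norm (for example in $L^1_{\mathrm{loc}}(\R^3)$, or by testing against $C_c^\infty$ functions), where dominated convergence applies thanks to the fixed compact support in the critical variable and the exponential decay of $K_\lambda$ and $\nabla K_\lambda$.
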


\begin{remark}
In the case $q,p\in[1,\infty)$ we have that $C^\infty_c(\R^3)$ is dense in $L^q_H L^p_z(\R^3)$, so we may assume that $f$ is essentially bounded and has compact support, yielding $\partial_i (K_\lambda \ast f)=(\partial_i K_\lambda)\ast f$. In the cases where $q$ and/or $p$ is infinite we add this  as an assumption. 
\end{remark}

We now investigate for the Laplacian on $\Omega$ with boundary conditions \eqref{eq:bc} the resolvent problems 
		\begin{align}\label{eq:LaplaceResolventOmega}
		\lambda v-\Delta v=f  \text{ on } \Omega,
		\end{align}
and for $\partial_i \in\{\partial_x,\partial_y,\partial_z\}$
		\begin{align}\label{eq:LaplaceResolventDerivativeOmega}
		\lambda w-\Delta w=\partial_i f \text{ on } \Omega.
		\end{align}

\begin{lemma}\label{LemmaAnisotropicLaplaceResolventOmega}
Let  $\theta\in (0,\pi)$ and
$f\in L^q_H L^p_z(\Omega)$ for $q\in [1,\infty], p\in [1,\infty)$.
%
%
Then there exists $\lambda_0>0$ such that for $\lambda \in \Sigma_{\theta}$  with $\lvert \lambda\rvert\ge \lambda_0$ the problems \eqref{eq:LaplaceResolventOmega} and \eqref{eq:LaplaceResolventDerivativeOmega} have unique solutions $v\in L^q_H L^p_z(\Omega)$ and $w\in L^q_H L^p_z(\Omega)$, respectively, and there exists a constant $C_{\theta}>0$
such that		%
		\begin{align}\label{eq:AnisotropicResolventOmega}
		\lvert \lambda \rvert \cdot \lVert v\rVert_{L^q_H L^p_z(\Omega)}
		+ \lvert \lambda \rvert^{1/2} \lVert \nabla v\rVert_{L^q_H L^p_z(\Omega)}
		+ \lVert \Delta v\rVert_{L^q_H L^p_z(\Omega)}
		\le C_{\theta} \lVert f\rVert_{L^q_H L^p_z(\Omega)}, \\
		\label{eq:AnisotropicResolventDerivativeOmega}
		\lvert \lambda \rvert^{1/2}  \lVert w\rVert_{L^q_H L^p_z(\Omega)}
		\le C_{\theta} \lVert f\rVert_{L^q_H L^p_z(\Omega)}.
		\end{align}
In particular for $q=\infty$ and $p\in (2,\infty)$ one can chose $\lambda_0=0$. 
\end{lemma}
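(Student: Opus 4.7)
My strategy is the method of images, which reduces the resolvent problem on $\Omega$ to a convolution problem on the torus $\T^2 \times (\R/(4h\Z))$ with a periodization of the full-space kernel $K_\lambda$. Combined with the anisotropic Young inequality of Lemma~\ref{YoungAnisotropic} and the $L^1$-bounds $|\lambda|\,\lVert K_\lambda\rVert_{L^1(\R^3)} + |\lambda|^{1/2}\,\lVert \nabla K_\lambda\rVert_{L^1(\R^3)} \le C_\theta$ established just above, this delivers \eqref{eq:AnisotropicResolventOmega}--\eqref{eq:AnisotropicResolventDerivativeOmega} uniformly in $q\in[1,\infty]$, without having to distinguish finite $q$ from $q=\infty$.

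Concretely, given $f$ on $\Omega$, I would extend it horizontally by periodicity and vertically first by even reflection across $z=0$ (compatible with Neumann on $\Gamma_u$) and then by odd reflection across $z=-h$ (compatible with Dirichlet on $\Gamma_b$), before identifying endpoints to obtain $\tilde f$ on the torus $\T^2 \times (\R/(4h\Z))$; this preserves the $L^q_HL^p_z$-norm up to an absolute constant. Since $K_\lambda$ is radial with exponential decay at rate $|\lambda|^{1/2}\cos(\theta/2)$, the periodization
$$ K_\lambda^{\mathrm{per}}(x) := \sum_{k \in \Z^2\times(4h\Z)} K_\lambda(x+k) $$
converges absolutely and satisfies $\lVert K_\lambda^{\mathrm{per}}\rVert_{L^1(\mathrm{cell})} \le \lVert K_\lambda\rVert_{L^1(\R^3)} \le C_\theta/|\lambda|$ by the standard tiling argument. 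The candidate solution $v := K_\lambda^{\mathrm{per}}\ast\tilde f$ inherits the even/odd symmetries of $\tilde f$ (as $K_\lambda$ is even in each coordinate), which forces $\partial_z v|_{z=0}=0$ and $v|_{z=-h}=0$, so that $v|_\Omega$ satisfies \eqref{eq:bc}; that $\lambda v-\Delta v=f$ on $\Omega$ follows from the distributional identity $(\lambda-\Delta)K_\lambda^{\mathrm{per}} = \delta_0$ on the torus. Applying the toral analogue of Lemma~\ref{YoungAnisotropic} (a straightforward folding of the $\R^3$ estimate) together with the kernel bounds yields the $|\lambda|\lVert v\rVert$ and $|\lambda|^{1/2}\lVert\nabla v\rVert$ parts of \eqref{eq:AnisotropicResolventOmega}, and $\lVert \Delta v\rVert$ is then controlled directly through the equation; an analogous argument with $\partial_j K_\lambda^{\mathrm{per}}$ in place of $K_\lambda^{\mathrm{per}}$ gives \eqref{eq:AnisotropicResolventDerivativeOmega}.

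For uniqueness, the classical $L^p(\Omega)$-theory handles $q<\infty$; for $q=\infty$, a difference of two solutions extends to an eigenfunction of $-\Delta$ on the torus with the prescribed symmetries, which must vanish once $\lambda\in\Sigma_\theta$ lies outside the discrete spectrum of $-\Delta$, i.e.\ for $|\lambda|\ge\lambda_0$ large enough. The sharpening $\lambda_0=0$ under $q=\infty$ and $p\in(2,\infty)$ is the main obstacle I expect, since the bound $|\lambda|\lVert K_\lambda\rVert_{L^1}$ degenerates as $\lambda\to 0$ within $\Sigma_\theta$. To overcome this, I would exploit the strict spectral gap $0\notin\sigma(-\Delta)$ (a consequence of the Dirichlet condition on $\Gamma_b$) together with the analyticity of $\lambda\mapsto(\lambda-\Delta)^{-1}$ on a neighborhood of $0$ to stitch the estimates down to the origin; the technical restriction $p>2$ enters through a Sobolev-type embedding that makes the vertical trace at $z=-h$ well-defined in the $L^\infty_HL^p_z$-setting, which is needed to justify the reflection construction at this low horizontal regularity.
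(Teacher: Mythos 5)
Your method-of-images approach is a genuinely different route from the paper's. The paper localizes: it multiplies the extension $Ev$ by a cut-off $\chi_r$ at scale $r=\eta|\lambda|^{-1/2}$, applies the full-space estimate of Corollary~\ref{LemmaLaplaceResolventFullspace}, and absorbs the commutator terms by taking $\eta$ large; this is exactly why the paper needs $|\lambda|\ge\lambda_0$ (the ball of radius $r$ must fit inside a period cell), and it then bootstraps to small $\lambda$ for $q=\infty$, $p>2$ via the $L^p$-theory and $W^{1,p}(G)\hookrightarrow L^\infty(G)$. Your periodization of $K_\lambda$ to the $4h$-periodic extended cylinder avoids all this absorption bookkeeping, and if carried out correctly it actually gives \eqref{eq:AnisotropicResolventOmega} uniformly for \emph{all} $\lambda\in\Sigma_\theta$ and all $p,q$ — a stronger and cleaner conclusion than the lemma as stated.

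There is, however, a genuine gap in your treatment of \eqref{eq:LaplaceResolventDerivativeOmega} for $\partial_j=\partial_z$: you cannot take $w=\partial_zK_\lambda^{\mathrm{per}}\ast\tilde f$ with the \emph{same} even-at-$0$, odd-at-$-h$ extension $\tilde f$. Since $\partial_zK_\lambda^{\mathrm{per}}$ is odd in $z$ and $\tilde f$ is even about $z=0$, the convolution $w$ is \emph{odd} about $z=0$ and \emph{even} about $z=-h$; that enforces $w|_{\Gamma_u}=0$ and $\partial_zw|_{\Gamma_b}=0$, which are precisely the opposite of the boundary conditions \eqref{eq:bc}. The correct construction must extend the datum $f$ by the \emph{opposite} pair of reflections (odd about $z=0$, even about $z=-h$) so that $\partial_z\hat f$ has the even-at-$0$/odd-at-$-h$ symmetry, and then set $w=K_\lambda^{\mathrm{per}}\ast\partial_z\hat f=\partial_zK_\lambda^{\mathrm{per}}\ast\hat f$. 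This is exactly the sign subtlety that the paper's auxiliary function $s(z)$ is designed to handle; your "analogous argument" glosses over it. The horizontal derivative cases $\partial_j\in\{\partial_x,\partial_y\}$ are unaffected because horizontal derivatives commute with the vertical reflections.

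Two of your closing remarks are also incorrect and should be dropped. First, the bound $|\lambda|\lVert K_\lambda\rVert_{L^1(\R^3)}=\sec(\arg\lambda/2)^2\le\sec(\theta/2)^2$ does \emph{not} degenerate as $\lambda\to0$ in $\Sigma_\theta$; the periodization estimate is uniform all the way to the origin, so no stitching via analyticity near $0$ is needed. Second, $p>2$ has nothing to do with "making the vertical trace of $f$ at $z=-h$ well-defined" — the reflection is a pointwise a.e.\ operation on $L^p$-data and requires no trace; in the paper, $p>2$ enters only in the bootstrap for small $\lambda$ through the embedding $W^{1,p}(G)\hookrightarrow L^\infty(G)$ applied to the \emph{solution} $v$, a step your construction renders unnecessary.
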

%

To prove this lemma, we will need some facts concerning isotropic $L^p$-spaces.
So, for $p\in (1,\infty)$ denote by $\Delta_p$ the Laplace operator on $L^p(\Omega)$ defined by
\begin{align*}
\Delta_p v=\Delta v, \quad D(\Delta_p)=\{
		v\in W^{2,p}_{\text{per}}(\Omega): 
		\restr{\partial_z v}{\Gamma_u}=0, 
		\restr{v}{\Gamma_b}=0
		\}.
\end{align*}		
%
		%
%
One has $\rho(-\Delta_p)\subset \C\setminus  [\delta,\infty)$, for some $\delta>0$, i.e. $0\in \rho(-\Delta_p)$, cf. \cite[Remark 8.23]{Nau2012}, and the resolvent satisfies for some $C_{\theta,p}>0$ the estimate
		\begin{align}\label{eq:LpResolventEstimateLaplace}
		\lvert \lambda \rvert \cdot \lVert (\Delta_p - \lambda)^{-1}f\rVert_{L^p(\Omega)}
		+ \Vert \Delta_p(\Delta_p - \lambda)^{-1}f\rVert_{L^p(\Omega)}
		\le C_{\theta,p}\lVert f\rVert_{L^p(\Omega)}, \quad f\in L^p(\Omega),
		\end{align}
where $\lambda\in \Sigma_\theta$, $\theta\in(0,\pi)$.
	 	%
 		%
%
%
Furthermore, $-\Delta_p$ possesses a bounded $\mathcal{H}^\infty$-calculus of angle $0$, see e.g. \cite{Nau2012}, 
and therefore
		\begin{align}\label{eq:Domains}
		D((-\Delta_p)^\vartheta)=[L^p(\Omega),D(\Delta_p)]_\vartheta\subset W^{2\vartheta,p}(\Omega), \quad \vartheta\in [0,1], 
		\end{align}
where $[\cdot,\cdot]$ denotes the complex interpolation functor. In particular $\partial_j(-\Delta_p)^{-1/2}$ is bounded on $L^p(\Omega)$ for $\partial_j\in\{\partial_x,\partial_y,\partial_z\}$ and by taking adjoints the same holds true for the closure of $(-\Delta_p)^{-1/2}\partial_j$.
%
%
%
%
%
%
%
This yields the estimates
		\begin{align}\label{eq:LpEstimateDerivativesLaplace}
		\begin{split}
		\lvert \lambda \rvert^{1/2}\lVert \partial_j(\Delta_p-\lambda)^{-1} f\rVert_{L^p(\Omega)}
		+\lvert \lambda \rvert^{1/2}\lVert 
		(\Delta_p-\lambda)^{-1}\partial_j f\rVert_{L^p(\Omega)}
		& \le C_{\theta,p} \lVert f\rVert_{L^p(\Omega)}, \quad f\in L^p(\Omega), \\
		\lVert \partial_j (\Delta_p-\lambda)^{-1}\partial_i f\rVert_{L^p(\Omega)}
		& \le C_{\theta,p}\lVert f\rVert_{L^p(\Omega)}, \quad f\in L^p(\Omega)
		\end{split}
		\end{align}
		for $\lambda\in \Sigma_\theta$, $\theta\in(0,\pi)$, and some $C_{\theta,p}>0$.
%
%

\begin{proof}[Proof of Lemma~\ref{LemmaAnisotropicLaplaceResolventOmega}]
First, we apply the following density arguments: 
\begin{itemize}
\item[(i)] For $q,p\in [1,\infty)$ and $f\in L^q_H L^p_z(\Omega)$ we assume that $f\in C^\infty_{\text{per}}([0,1]^2;C^\infty_c(-h,0))$ since \\ $C^\infty_{\text{per}}([0,1]^2;C^\infty_c(-h,0))$ is a dense subspace of $L^q_H L^p_z(\Omega)$. 
\item[(ii)] For $q=\infty$ and $f\in L^\infty(G;L^p(-h,0))$, we assume that $f\in L^\infty(G;C^\infty_c(-h,0))$, as the latter space is dense in $L^\infty_H L^p_z(\Omega)$.
\end{itemize}
%
%
%
%
In particular, in either case we may assume that $f=0$ on $\Gamma_u\cup \Gamma_b$ and $f\in L^\infty(\Omega)$.
The existence of a unique solution to the problems \eqref{eq:LaplaceResolventOmega} and \eqref{eq:LaplaceResolventDerivativeOmega} in $L^q_H L^p_z(\Omega)$ for such smooth $f$ follows from the properties of the mappings $(\lambda-\Delta)^{-1}$ and $(\lambda-\Delta)^{-1}\partial_i$ in $L^r(\Omega)$ for $\lambda\in \lambda \in \Sigma_{\theta}$
since 
$$v\in W^{2,r}(\Omega)\hookrightarrow L^\infty(\Omega)\hookrightarrow L^q_H L^p_z(\Omega) \quad \hbox {and} \quad w\in W^{1,r}(\Omega)\hookrightarrow L^\infty(\Omega)\hookrightarrow L^q_H L^p_z(\Omega), \quad r >3.$$
%
%
It therefore suffices to prove the estimates \eqref{eq:AnisotropicResolventOmega} and  \eqref{eq:AnisotropicResolventDerivativeOmega}. This is done in the following by localizing the results of Lemma~\ref{LemmaLaplaceResolventFullspace}. 

For this purpose we first make use of the extension operator
		\[
		E=E^\text{even,odd}_{z}\circ E^{\text{per}}_{H}
		\]
where $E^{\text{per}}_{H}$ is the periodic extension operator from $G$ to $\R^2$ in horizontal direction and $E_z^{\text{even,odd}}$ extends from $(-h,0)$ to $(-2h,h)$ in vertical direction via even and odd reflexion at the top and bottom part of the boundary respectively. 

Second, we utilize a family of cut-off-functions $\chi_r\in C^\infty_c(\R^3)$ for $r\in (0,\infty)$ of the form $\chi_r(x,y,z)=\varphi_r(x,y)\psi_r(z)$ where $\varphi_r \in C_c^\infty(\R^2)$ and $\psi_r\in C_c^\infty(\R)$ satisfy
		\begin{align*}
		\begin{array}{rlrl}
		\varphi_r \equiv 1 & \text{ on } [-1/4,5/4]^2, &
		\varphi_r \equiv 0 & \text{ on } \left((-\infty,-r-1/4]\cup [5/4+r,\infty)\right)^2,
		\\
		\psi_r \equiv 1 & \text{ on } [-5h/4,h/4], &
		\psi_r \equiv 0 & \text{ on } (-\infty,-r-5h/4]\cup [h/4+r,\infty),
		\end{array}
		\end{align*}
and there is a constant $M>0$ independent of $r$ such that
		\[
		\lVert \varphi_r\rVert_\infty +\lVert \psi_r\rVert_\infty
		+ r \left(
		\lVert \nabla_H \varphi_r \rVert_\infty +\lVert \partial_z \psi_r \rVert_\infty	
		\right)
		+r^2 \left(
		\lVert \Delta_H \varphi_r\rVert_\infty +\lVert \partial_z^2\psi_r\rVert_\infty
		\right)
		\le M.
		\]
		%
%
Here, we consider $0<4r<3\min\{1,h\}$ which implies that $\varphi_r$ and $\psi_r$ are supported on $(-1,2)$ and $(-2h,h)$ respectively. 
We now define an extension of $v$ from $\Omega$ onto the whole space $\R^3$ via
		\[
		u(x,y,z)=\chi_r(x,y,z) (Ev)(x,y,z)
		\]
for a suitable value of $r$ which we will specify later on. 
If $v$ solves problem \eqref{eq:LaplaceResolventOmega} then $u$ solves the problem
		\[
		\lambda u - \Delta u = F \text{ on } \R^3, \quad 
		F:=\chi_r Ef-2(\nabla \chi_r) \cdot E(\nabla v)-(\Delta \chi_r) Ev.
		\]
Here we made use of the fact that $E$ commutes with derivatives of $v$. 

Note that not only does $F$ have compact support, but we also have $F\in L^\infty(\R^3)$ since we may assume that $f\in L^\infty(\Omega)$ and $v\in W^{1,\infty}(\Omega)$ by the above approximation argument. 
Thus we may now apply Lemma~\ref{LemmaLaplaceResolventFullspace}, and estimate \eqref{eq:AnisotropicResolventFullspace} yields
		\[
		\lvert \lambda \rvert\cdot \lVert u\rVert_{L^q_HL^p_z(\R^3)}
		+\lvert \lambda \rvert^{1/2} \lVert \nabla u\rVert_{L^q_HL^p_z(\R^3)}
		\le C_\theta \lVert F\rVert_{L^q_HL^p_z(\R^3)}.
		\]
To estimate $F$ we use that $\chi_r$ is supported on $(-1,2)^2\times(-2h,h)$, and therefore
		\begin{align*}
		\lVert \chi_r Ef\rVert_{L^q_HL^p_z(\R^3)}
		&\le 27 M^2 \lVert f\rVert_{L^q_HL^p_z(\Omega)} ,
		\\
		\lVert (\nabla\chi_r) \cdot E(\nabla v)\rVert_{L^q_HL^p_z(\R^3)}
		&\le 27 M^2r^{-1}\lVert \nabla v\rVert_{L^q_HL^p_z(\Omega)},
		\\
		\lVert (\Delta\chi_r) Ev\rVert_{L^q_HL^p_z(\R^3)}
		&\le 27 M^2r^{-2}\lVert v\rVert_{L^q_HL^p_z(\Omega)}.
		\end{align*}
		%
Next, we set $r=\eta \lvert \lambda\rvert^{-1/2}$ to obtain
		\begin{align*}
		\lVert F\rVert_{L^q_HL^p_z(\R^3)}
		&\le  27 M^2 \left(
		\lVert f\rVert_{L^q_HL^p_z(\Omega)}
		+2\eta^{-1}\lvert \lambda\rvert^{1/2}\lVert \nabla v\rVert_{L^q_HL^p_z(\Omega)}
		+\eta^{-2} \lvert \lambda\rvert\cdot\lVert v\rVert_{L^q_HL^p_z(\Omega)}
		\right).
		\end{align*}
Now assume that $\eta>0$ is sufficiently large enough such that $54 C_\theta M^2\eta^{-1}<1/2$, $27C_\theta M^2 \eta^{-2}<1/2$ and then assume that $\lambda_0>0$ is large enough such that $4\eta \lambda_0^{-1/2}<3\min\{1,h\}$. This and the fact that $u$ is an extension of $v$ then yields
		\[
		\lvert \lambda \rvert\cdot \lVert v\rVert_{L^q_HL^p_z(\Omega)}
		+\lvert \lambda \rvert^{1/2} \lVert \nabla v\rVert_{L^q_HL^p_z(\Omega)}
		\le 54 C_\theta M^2 \lVert f\rVert_{L^q_HL^p_z(\Omega)} \quad \hbox{for} \quad \lvert \lambda\rvert\ge \lambda_0.
		\]
		%
%

In the case  $q=\infty$, $p\in (2,\infty)$ we obtain the estimate for the full range of $\lambda\in \Sigma_\theta$ by setting $\lambda_1:=\frac{\lambda_0}{\lvert \lambda \rvert}\lambda$ for $0<\lvert \lambda\rvert <\lambda_0$. Then $f\in L^\infty_H L^p_z(\Omega)\hookrightarrow L^p(\Omega)$ yields
		\[
		\lvert \lambda \rvert \cdot \lVert v\rVert_{L^p(\Omega)}
		+\lvert \lambda \rvert^{1/2} \lVert \nabla v \rVert_{L^p(\Omega)}
		+\lVert \Delta v\rVert_{L^p(\Omega)}
		\le C_{\theta,p} \lVert f\rVert_{L^p(\Omega)}
		\]
by \eqref{eq:LpResolventEstimateLaplace} and since $\lambda_1 v-\Delta v=f+(\lambda_1-\lambda)v$ we obtain 
		\[
		\lvert \lambda_1 \rvert \cdot \lVert v\rVert_{L^\infty_H L^p_z(\Omega)}
		+\lvert \lambda_1 \rvert^{1/2} \lvert \nabla v \rvert_{L^\infty_H L^p_z(\Omega)}
		+\lVert \Delta v\rVert_{L^\infty_H L^p_z(\Omega)}
		\le C_{\theta,p} \left(
		\lVert f\rVert_{L^\infty_H L^p_z(\Omega)}
		+\lvert \lambda_1-\lambda\rvert \cdot \lVert v \rVert_{L^\infty_H L^p_z(\Omega)}
		\right)
		\]
where we can further estimate $\lvert \lambda_1-\lambda\rvert<\lambda_0$, and 
$p\in (1,\infty)$ yields 
		\[
		\lVert v \rVert_{L^\infty_H L^p_z(\Omega)}
		\le C_p \lVert v\rVert_{W^{2,p}_H L^p_z(\Omega)}
		\le C_p \lVert v\rVert_{W^{2,p}(\Omega)}
		\le C_p \lVert \Delta v\rVert_{L^p(\Omega)}
		\le C_p\lVert f\rVert_{L^p(\Omega)}
		\le C_p \lVert f\rVert_{L^\infty_H L^p_z(\Omega)}
		\]
where we used $W^{2,p}(G)\hookrightarrow L^{\infty}(G)$ and that $\Delta_p$ is invertible on $L^p(\Omega)$. Since $\lvert \lambda_1\rvert=\lambda_0>\lvert \lambda\rvert$, this yields the desired result for the full range of $\lambda\in \Sigma_\theta$, $\theta\in (0,\pi)$.

If $v$ instead solves problem \eqref{eq:LaplaceResolventDerivativeOmega} with 
$\partial_i=\partial_z$ then $u$ solves the problem
		\[
		\lambda u-\Delta u=G \text{ on } \R^3, \quad 
		G:=\chi_r E(\partial_z f)-2(\nabla \chi_r) \cdot E(\nabla v)-(\Delta\chi_r)Ev.
		\]
We rewrite 
		\[
		-2(\nabla\chi_r) \cdot E(\nabla v)-(\Delta\chi_r)Ev
		=-2 \text{div} (\nabla\chi_r Ev)+(\Delta\chi_r) Ev,
		\quad
		\chi_r E(\partial_z f)=\partial_z (\chi_r s Ef)-(\partial_z\chi_r) s Ef
		\]
where 
		\[
		s(z)=\begin{cases}
		1, & z\in (-2h,0), 
		\\
		-1, & x\in (0,h).
		\end{cases}
		\]
		%
Here, by the density argument above, we may assume $f=0$ on $\Gamma_u\cup \Gamma_b$. 
This yields $u=u_1+u_2$ where
		\begin{align*}
		\begin{array}{rlll}
		\lambda u_1-\Delta u_1&=\partial_z G_1 +\text{div}_H G_2 &\text{ on } \R^3, &
		G_1:=\chi_r s Ef, \quad G_2:=-2 (\nabla \chi_r) Ev,
		\\
		\lambda u_2-\Delta u_2&=G_3 &\text{ on } \R^3, &
		G_3:=-(\partial_z\chi_r) s Ef+(\Delta\chi_r) Ev.
		\end{array}
		\end{align*}
Since $G_i$, $i\in \{1,2,3\}$, are bounded and have compact support, we may apply Lemma~\ref{LemmaLaplaceResolventFullspace} to obtain the estimate
		\[
		\lvert \lambda \rvert^{1/2} \lVert u\rVert_{L^q_H L^p_z(\R^3)}
		\le C_\theta \left(
		\lVert G_1\rVert_{L^q_H L^p_z(\R^3)}
		+\lVert G_2\rVert_{L^q_H L^p_z(\R^3)}
		+\lvert \lambda \rvert^{-1/2} \lVert G_3\rVert_{L^q_H L^p_z(\R^3)}
		\right).
		\]
Proceeding as above we obtain
		\begin{align*}
		\lVert G_1\rVert_{L^q_H L^p_z(\R^3)}
		&\le 27 M^2\lVert f\rVert_{L^q_H L^p_z(\Omega)},
		\\
		\lVert G_2\rVert_{L^q_H L^p_z(\R^3)}
		&\le 54 M^2 \eta^{-1} \lvert\lambda\rvert^{1/2}
		\lVert v\rVert_{L^\infty_H L^p_z(\Omega)},
		\\
		\lVert G_3\rVert_{L^\infty_H L^p_z(\R^3)}
		&\le 27 M^2\eta^{-1}\lvert \lambda\rvert^{1/2} 
		\lVert f\rVert_{L^\infty_H L^p_z(\Omega)}
		+27 M^2 \eta^{-2}\lvert \lambda\rvert\cdot 
		\lVert v\rVert_{L^\infty_H L^p_z(\Omega)}.
		\end{align*}
The above assumptions on $\eta$ and $\lambda_0$ then yield the desired result for $\lvert \lambda\rvert>\lambda_0$. The case $\partial_i\in \{\partial_x,\partial_y\}$ is analogous where for $f\in L^\infty(G;C^\infty_c(-h,0))$ horizontal derivatives are understood in the sense of distributions, and otherwise derivatives can be treated using smooth approximations as above. 

For the case $q=\infty$  and $p\in (2,\infty)$, to extend this estimate to the full range of $\lambda\in \Sigma_\theta$ one proceeds as above to obtain	
		\[
		\lvert \lambda\rvert^{1/2}\cdot \lVert v\rVert_{L^p(\Omega)}
		+ \lVert \nabla v\rVert_{L^p(\Omega)}
		\le C_{\theta,p} \lVert f\rVert_{L^p(\Omega)}
		\]	
from \eqref{eq:LpEstimateDerivativesLaplace},
as well as 
		\[
		\lvert \lambda_1\rvert^{1/2}\cdot \lVert v\rVert_{L^\infty_H L^p_z(\Omega)}
		+ \lVert \nabla v\rVert_{L^\infty_H L^p_z(\Omega)}
		\le C_{\theta}\left(
		\lVert f\rVert_{L^\infty_H L^p_z(\Omega)}
		+ \lvert \lambda_1\rvert^{-1/2} \lvert \lambda_1-\lambda\rvert \cdot 
		\lVert v\rVert_{L^\infty_H L^p_z(\Omega)}
		\right).
		\]
Since we have $\lvert \lambda_1\rvert^{-1/2} \lvert \lambda_1-\lambda\rvert\le \lambda_0^{1/2}$ and $p\in (2,\infty)$ this yields
		\[
		\lVert v\rVert_{L^\infty_H L^p_z(\Omega)}
				\le C_p \lVert v\rVert_{W^{1,p}_H L^p_z(\Omega)}
		\le C_p \lVert v\rVert_{W^{1,p}(\Omega)}
		\le C_p \lVert \nabla v \rVert_{L^p(\Omega)}
		\le C_p \lVert f\rVert_{L^p(\Omega)}
		\le C_p \lVert f\rVert_{L^\infty_H L^p_z(\Omega)},
		\]
where we used the embedding $W^{1,p}(G)\hookrightarrow L^{\infty}(G)$ and 
the Poincar{\'e} inequality $\lVert v\rVert_{L^p(\Omega)} \leq C_p \lVert \nabla v\rVert_{L^p(\Omega)}$ for $v$ with $\restr{v}{\Gamma_b}=0$.
\end{proof}

\begin{remark}\label{rem:AnisotropicLaplaceResolventOmega}
The results of Lemma~\ref{LemmaAnisotropicLaplaceResolventOmega} also hold true if the condition $\restr{\partial_z v}{\Gamma_u}=0$ is replaced by $\restr{v}{\Gamma_u}=0$ or if $L^q_HL^p_z(\Omega)$ is replaced by 
$C_{\text{per}}([0,1]^2;L^p(-h,0))$. For pure Dirichlet boundary conditions one extends by an odd reflexion at both $z=0$ and $z=-h$ replacing $E^\text{even,odd}_{z}$ by $E^\text{odd,odd}_{z}$ and setting $s(z)\equiv 1$ in the proof.
\end{remark}

Since $\Omega=G\times(-h,0)$ is a cylindrical domain the semigroup generated by the Laplacian with the above boundary conditions satisfies
		\[
		e^{t\Delta}(f\otimes g)
		=e^{t\Delta_H}f\otimes e^{t\Delta_z}g, \quad 
		f:G\to \R^2, \quad g:(-h,0)\to \R,
		\]
where $(f\otimes g)(x,y,z):=f(x,y)g(z)$ is an elementary tensor, $\Delta_H:=\partial_x^2+\partial_y^2$ is the Laplacian on $G$ with periodic boundary conditions and 
$\Delta_z$ is defined by
		\[
	\Delta_z v := \partial_z^2 v, \quad D(\Delta_z)=\{ f\in W^{2,p}(-h,0):f(-h)=\partial_z f(0)=0 \}.
		\]
%


We now investigate these operators separately, starting with the vertical one, cf. \cite{DenkHieberPruess2003, Nau2012}.
\begin{lemma}\label{LemmaVerticalSemigroup}
Let $p\in(1,\infty)$. Then the operator $\Delta_z$ 
		%
		%
generates a strongly continuous, exponentially stable, analytic semigroup on $L^p(-h,0)$. 
		%
\end{lemma}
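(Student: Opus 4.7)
The plan is to verify the three claimed properties—strong continuity, analyticity, and exponential stability—by combining the explicit structure of a one-dimensional elliptic problem on a bounded interval with standard semigroup theory.

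First I would establish sectoriality of $\Delta_z$ on $L^p(-h,0)$. The resolvent problem $\lambda v - \partial_z^2 v = f$ on $(-h,0)$ with $v(-h)=0$ and $\partial_z v(0)=0$ can be solved explicitly via a Green's function built from $\sinh(\sqrt{\lambda}(z+h))$ and $\cosh(\sqrt{\lambda}z)$, which solve the homogeneous ODE and satisfy one of the two boundary conditions each. A direct estimation of the resulting kernel combined with Young's inequality in one variable yields
\[
\lvert\lambda\rvert\cdot\lVert(\lambda-\Delta_z)^{-1}f\rVert_{L^p(-h,0)} + \lVert\partial_z^2(\lambda-\Delta_z)^{-1}f\rVert_{L^p(-h,0)}\le C_\theta \lVert f\rVert_{L^p(-h,0)}
\]
uniformly for $\lambda\in\Sigma_\theta$ with $\theta\in(0,\pi)$. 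Since $C^\infty_c(-h,0)\subset D(\Delta_z)$ is dense in $L^p(-h,0)$, standard generation theorems then yield that $\Delta_z$ generates a bounded analytic $C_0$-semigroup on $L^p(-h,0)$. Alternatively, this follows from the general results on regular elliptic boundary value problems cited in \cite{DenkHieberPruess2003, Nau2012}.

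For exponential stability I would compute the spectrum of $\Delta_z$ explicitly. The eigenvalue problem $-\partial_z^2\varphi = \mu\varphi$ with $\varphi(-h)=\partial_z\varphi(0)=0$ has eigenfunctions $\varphi_n(z)=\sin\bigl(\tfrac{(2n+1)\pi(z+h)}{2h}\bigr)$ and eigenvalues $\mu_n=\bigl(\tfrac{(2n+1)\pi}{2h}\bigr)^2$, $n\in\N_0$, all strictly positive with smallest value $\mu_0 = \pi^2/(4h^2)>0$. Because $\Delta_z$ has compact resolvent, the spectrum is a discrete set of eigenvalues independent of $p\in(1,\infty)$, so $\sigma(\Delta_z)\subset(-\infty,-\mu_0]$. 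Combined with analyticity, the spectral bound coincides with the growth bound, giving exponential decay $\lVert e^{t\Delta_z}\rVert \le M e^{-\mu_0 t}$.

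The only delicate point is verifying the resolvent bound uniformly up to angles close to $\pi$, which is needed for analyticity of angle close to $\pi/2$. This is handled by the explicit Green's function representation, since the relevant exponential factors $e^{-\sqrt{\lambda}\,\cdot}$ decay suitably throughout $\Sigma_\theta$ for any $\theta<\pi$; the argument is entirely classical for one-dimensional Sturm--Liouville operators with regular boundary conditions, so I would simply invoke the references above rather than rewrite the computation.
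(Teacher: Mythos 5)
The paper does not give a proof of Lemma~\ref{LemmaVerticalSemigroup}; it merely cites \cite{DenkHieberPruess2003, Nau2012}, which treat general regular elliptic boundary value problems (here a one-dimensional operator with mixed Dirichlet--Neumann conditions falls squarely within scope) and give the sectoriality needed for analytic generation. Your argument is correct and gives an elementary, self-contained alternative: the explicit Green's function built from $\sinh(\sqrt{\lambda}(z+h))$ and $\cosh(\sqrt{\lambda}z)$ does yield the uniform resolvent estimate on $\Sigma_\theta$ for every $\theta\in(0,\pi)$, and density of $C^\infty_c(-h,0)$ in $L^p(-h,0)$ gives strong continuity. The eigenvalue computation is also right: solutions of $-\partial_z^2\varphi=\mu\varphi$ with $\varphi(-h)=\partial_z\varphi(0)=0$ are $\varphi_n(z)=\sin\bigl(\tfrac{(2n+1)\pi(z+h)}{2h}\bigr)$ with $\mu_n=\bigl(\tfrac{(2n+1)\pi}{2h}\bigr)^2$, and since $\Delta_z$ has compact resolvent on the bounded interval, $\sigma(\Delta_z)\subset(-\infty,-\pi^2/(4h^2)]$ for all $p\in(1,\infty)$; the spectral mapping theorem for analytic semigroups then upgrades the negative spectral bound to exponential decay. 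What your route buys is transparency and explicit constants; what the citation route buys is that it automatically also delivers $\mathcal{R}$-sectoriality and a bounded $\mathcal{H}^\infty$-calculus, which the paper in fact uses elsewhere (\eqref{eq:Domains}), so for the overall machinery the references carry more freight than the bare semigroup statement. Either way, the lemma as stated is established.
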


\begin{lemma}\label{LemmaHorizontalSemigroup}
Let $\theta\in(0,\pi/2)$. Then there exists a constant $C_\theta>0$ such that for all $\tau\in\Sigma_\theta$ we have
		\begin{align*}
		\lvert \tau\rvert^{1/2}\lVert \nabla_H e^{\tau\Delta_H}Q f\rVert_{L^\infty(G)}
		\le C_\theta \lVert f\rVert_{L^\infty(G)}, \quad f\in L^\infty(G).
		\end{align*}
\end{lemma}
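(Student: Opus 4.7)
The plan is to exploit the fact that on the torus $\mathbb{T}^2 = [0,1]^2/{\sim}$, the projection $Q$ and the semigroup $e^{\tau\Delta_H}$ are simultaneously Fourier multipliers, hence they commute. Combined with the boundedness of $Q$ on $C^{0,\alpha}(\mathbb{T}^2)$ noted in Remark~\ref{rem:main}(b), this will let us factor out the unboundedness of $Q$ on $L^\infty$ by inserting a little Hölder regularity gained from the heat semigroup. Concretely, using the semigroup identity and the commutation $Qe^{(\tau/2)\Delta_H}=e^{(\tau/2)\Delta_H}Q$, I would write
\begin{equation*}
\nabla_H e^{\tau\Delta_H}Qf \;=\; \bigl(\nabla_H e^{(\tau/2)\Delta_H}\bigr)\,Q\,\bigl(e^{(\tau/2)\Delta_H}f\bigr).
\end{equation*}

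Fix $\alpha\in(0,1)$. The core of the argument is the following pair of classical heat-semigroup estimates on $\mathbb{T}^2$, uniform for $\tau\in\Sigma_\theta$, $\theta<\pi/2$:
\begin{align*}
\textrm{(a)} \quad & \bigl\|e^{\tau\Delta_H}g\bigr\|_{C^{0,\alpha}(\mathbb{T}^2)} \le C_\theta |\tau|^{-\alpha/2}\|g\|_{L^\infty(\mathbb{T}^2)}, \\
\textrm{(b)} \quad & \bigl\|\nabla_H e^{\tau\Delta_H}h\bigr\|_{L^\infty(\mathbb{T}^2)} \le C_\theta |\tau|^{(\alpha-1)/2}\|h\|_{C^{0,\alpha}(\mathbb{T}^2)}.
\end{align*}
Chaining (b), boundedness of $Q$ on $C^{0,\alpha}(\mathbb{T}^2)^2$, and (a), and replacing $\tau$ by $\tau/2$ where needed, yields
\begin{equation*}
\bigl\|\nabla_H e^{\tau\Delta_H}Qf\bigr\|_{L^\infty} \le C_\theta|\tau|^{(\alpha-1)/2}\bigl\|Qe^{(\tau/2)\Delta_H}f\bigr\|_{C^{0,\alpha}} \le C_\theta|\tau|^{(\alpha-1)/2}|\tau|^{-\alpha/2}\|f\|_{L^\infty} = C_\theta|\tau|^{-1/2}\|f\|_{L^\infty},
\end{equation*}
which is the claim; note that the two fractional losses exactly add up to $1/2$, independently of $\alpha$.

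To prove (a) and (b) I would work with the heat kernel $K_\tau$ on $\mathbb{T}^2$, obtained from the Gaussian kernel on $\mathbb{R}^2$ by periodic summation, which for $\tau \in \Sigma_\theta$ satisfies Gaussian-type bounds $|\nabla_H^k K_\tau(x)| \le C_\theta |\tau|^{-(2+k)/2} e^{-c_\theta |x|^2/|\tau|}$, $k=0,1$. For (a), write
\begin{equation*}
e^{\tau\Delta_H}g(x)-e^{\tau\Delta_H}g(y) = \int_{\mathbb{T}^2}\bigl(K_\tau(x-z)-K_\tau(y-z)\bigr)g(z)\,dz
\end{equation*}
and interpolate between the bounds $|K_\tau(\cdot)|_{L^1}\le C_\theta$ and $|\nabla K_\tau|_{L^1}\le C_\theta|\tau|^{-1/2}$ to obtain the Hölder seminorm bound $C_\theta|\tau|^{-\alpha/2}\|g\|_\infty$. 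For (b), the essential cancellation $\int_{\mathbb{T}^2}\nabla_H K_\tau(y)\,dy = 0$ allows one to rewrite
\begin{equation*}
\nabla_H e^{\tau\Delta_H}h(x) = \int_{\mathbb{T}^2}\nabla_H K_\tau(x-y)\bigl(h(y)-h(x)\bigr)\,dy,
\end{equation*}
so that $|h(y)-h(x)|\le [h]_{C^{0,\alpha}}|x-y|^\alpha$ combined with the Gaussian tail of $\nabla_H K_\tau$ produces the factor $|\tau|^{(\alpha-1)/2}$.

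The main obstacle I anticipate is the careful verification of (a) and (b) for complex $\tau$ in the sector $\Sigma_\theta$, as opposed to real positive $t$; however this is standard, either via the analytic extension of the Gaussian kernel on $\mathbb{R}^2$ (with the constants in the Gaussian bound depending on $\theta$ but not on $\tau$) and periodic summation, or alternatively by identifying $C^{0,\alpha}(\mathbb{T}^2)$ with $B^{\alpha}_{\infty,\infty}(\mathbb{T}^2)$ and invoking the smoothing properties of the heat semigroup on periodic Besov spaces (cf.\ the reference to \cite{Amann1997} already cited in Remark~\ref{rem:main}(b)). An alternative route, avoiding the commutation step entirely, would be to note that $Q\colon C^{0,\alpha}(\mathbb{T}^2)^2 \to C^{0,\alpha}(\mathbb{T}^2)^2$ and $\nabla_H e^{\tau\Delta_H}\colon C^{0,\alpha}\to L^\infty$ are bounded with the above decay, and combine them directly after first smoothing $f$; both approaches lead to the same estimate.
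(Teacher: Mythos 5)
Your argument is correct but takes a genuinely different route from the paper's. The paper reduces to $\R^2$ by periodic extension, writes $\mathds{1}-Q_{\R^2}=(R_jR_k)_{j,k}$, and directly analyzes the symbol $m_{\tau,j,k,l}(\xi)=|\tau|^{1/2}\xi_l\bigl(\delta_{jk}-\xi_j\xi_k/|\xi|^2\bigr)e^{-\tau|\xi|^2}$, verifying Mikhlin-type bounds and an additional decay condition near $\xi=0$, then applying \cite[Lemmas 8.2.3 and 8.2.4]{ArendtBattyHieberNeubrander} to obtain a uniform $L^1$-kernel bound and concluding via Young's inequality. You instead split the semigroup, commute $Q$ past $e^{(\tau/2)\Delta_H}$, and pass through $C^{0,\alpha}$, combining the two smoothing estimates with the $C^{0,\alpha}$-boundedness of $Q$ — a fact the paper already records in Remark~\ref{rem:main}(b) and exploits in the proof of Claim~\ref{SemigroupEstimates}, but not for this lemma. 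Both approaches are valid; yours is more modular and reuses ingredients the paper has on hand, while the paper's is more self-contained and produces the uniform $L^1$ kernel bound directly. One small imprecision worth fixing: your estimate (a), stated with the full Hölder norm on the left, fails for large $|\tau|$ (take $g\equiv1$: the left side remains of order $\|g\|_\infty$ while $|\tau|^{-\alpha/2}\to0$). This is harmless for the chain, because only the Hölder \emph{seminorm} version of (a) is actually used: your (b), via the cancellation $\int\nabla_HK_\tau=0$, bounds $\|\nabla_He^{\tau\Delta_H}h\|_\infty$ by $[h]_{C^{0,\alpha}}$ alone, and $Q$ is bounded on the seminorm since $Q$ fixes constants ($Q1=1$). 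With seminorms tracked throughout, the exponents still sum to $-1/2$ and the estimate closes as you describe.
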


\begin{remark}
\label{rem:HelmholtzLinfty}
Note that although the two-dimensional Helmholtz projector with periodic boundary conditions $Q$ is unbounded on $L^{\infty}(G)$, the composition $\nabla_H e^{\tau\Delta_H}Q$ defines a bounded operator for $\tau\in \Sigma_\theta$.
\end{remark}

\begin{proof}[Proof of Lemma~\ref{LemmaHorizontalSemigroup}]
Let $Q_{\R^2}$ and $Q$ be the Helmholtz projection on $\R^2$ and $\T^2$, respectively, and $E_H^{\text{per}}$ be the periodic extension operator from $G$ onto $\R^2$. Then $E^{\text{per}}_H Q f=Q_{\R^2} E^{\text{per}}_H f$ for all $f:G\to \R^2$ and
		\begin{align*}
		E_H^{\text{per}}\lvert \tau\rvert^{1/2} \nabla_H e^{\tau\Delta_H}Q f
		=\lvert \tau\rvert^{1/2} \nabla_H e^{\tau\Delta_H}E_H^{\text{per}}Q f
		=\lvert \tau\rvert^{1/2} \nabla_H e^{\tau\Delta_H}Q_{\R^2} E_H^{\text{per}} f.
		\end{align*} 
		%
%
Since $\lVert E_H^{\text{per}}f\rVert_{L^\infty(\R^2)}=\lVert f\rVert_{L^\infty(G)}$ it therefore suffices to consider the operator $\Delta_H$ on the full space $\R^2$.
Recall that $\mathds{1}-Q_{\R^2}$ is given by $(R_jR_k)_{1\le j,k\le 2}$ where $R_j$ is the Riesz transform in the $j$-th direction. 
We therefore investigate the family of Fourier multipliers
		\[
		m_{\tau,j,k,l}(\xi)=
		\begin{cases}
		&\lvert \tau \rvert^{1/2}\xi_l \left(
		\delta_{j,k}-\frac{\xi_j\xi_k}{\lvert \xi\rvert^2}
		\right)
		e^{-\tau\lvert \xi\rvert^2},
		\quad \xi\in\R^2\setminus\{0\},
		\\
		&0, \quad \xi=0,
		\end{cases} \quad \hbox{for} \quad 1\le j,k,l \le 2.
		\]
		%
%
Using the invariance under rescaling and replacing $\xi$ with $\lvert \tau\rvert^{-1/2}\xi$, we may assume that $\tau=e^{i\psi}$ where $\lvert \psi\rvert < \theta$. 
We show that for each of these symbols we have $m=\hat{g}$ for some $g\in L^1(\R^2)$ such that $\lVert g\rVert_{L^1(\R^2)}\le C_\theta$. The desired estimate then follows from Young's inequality. 
Since this family of symbols belongs to $C(\R^2)\cap C^\infty(\R^2\setminus\{0\})$ we verify the Mikhlin condition
		\begin{align}\label{eq:Mikhlin}
		\max_{\lvert \alpha\rvert\le 2}\sup_ {\xi\in\R^2\setminus\{0\}} 
		\lvert \xi\rvert^{\lvert\alpha\rvert+\delta} 
		\lvert D^\alpha m(\xi)\rvert<M<\infty, 
		\end{align}
for some $\delta>0$. 
Elementary calculations using the homogeneity of the first factor show that for an arbitrary multi-index $\alpha\in\N^2$ we have
		\[
		\sup_ {\xi\in\R^2\setminus\{0\}}
		\lvert \xi\rvert^\alpha 
		\left\lvert D^\alpha \frac{\xi_j \xi_k}{\lvert \xi\rvert^2}\right\rvert
		<M_\alpha
		<\infty,
		\quad
		\sup_ {\xi\in\R^2\setminus\{0\}}
		\lvert \xi\rvert^{\alpha+\delta} 
		\left\lvert D^\alpha \xi_l e^{-e^{i\psi}\lvert \xi\rvert^2}\right\rvert
		<M_{\alpha,\delta,\psi}
		\le M_{\alpha,\delta,\theta}
		<\infty
		\]
for $\delta\in(0,1)$ which together with the product rule yield that \eqref{eq:Mikhlin} is satisfied. Analogously we verify the condition
		\begin{align}\label{eq:NotMikhlin}
		\lvert \xi\rvert^{\lvert \alpha\rvert}\lvert D^\alpha m(\xi)\rvert
		\le C_\alpha \lvert \xi\rvert, 
		\quad \lvert \xi\rvert\le 1, \xi\neq 0
		\end{align}
for $0<\lvert \alpha\rvert\le 2$ by noting that
		\begin{align*}
		\lvert \xi\rvert^{\lvert \alpha\rvert} 
		\left\lvert D^\alpha \frac{\xi_j \xi_k \xi_j}{\lvert \xi\rvert^2}\right\rvert 
		\le C_\alpha \lvert \xi\rvert, \quad \lvert \xi\rvert\le 1, \xi\neq 0
		\end{align*}
and 
		\begin{align*}
		\lvert \xi\rvert^{\lvert \alpha\rvert} \left\lvert D^\alpha 
		e^{-e^{i\psi}\lvert \xi\rvert^2}\right\rvert 
		+\lvert \xi\rvert^{\lvert \alpha\rvert} \left\lvert D^\alpha \xi_l 
		e^{-e^{i\psi}\lvert \xi\rvert^2}\right\rvert 
		\le C_{\alpha,\delta,\psi}\le C_{\alpha,\delta,\theta}
		\quad \lvert \xi\rvert\le 1, \xi\neq 0.
		\end{align*}
We now split the symbol into $m=\varphi m+(1-\varphi)m$ where $\varphi\in C^\infty_c(\R^2)$ is a cut-off function satisfying $\varphi(\xi)=1$ for $\lvert \xi\rvert\le 2$. Applying \cite[Lemma 8.2.3 and 8.2.4]{ArendtBattyHieberNeubrander} to the terms $(1-\varphi)m$ and $\varphi m$  respectively then yields the desired results.
\end{proof}

\section{Linear estimates for the hydrostatic Stokes operator: part 1}\label{sec:stokesEasy}

A key element in the proof of our global existence results are the estimates for the hydrostatic Stokes semigroup in $\Xipos$.  
To this end, we prove first estimates in the larger space $\Xip$, where we make use of representation \eqref{eq:Ap}. We thus define the operator $\Aip$ by
	\begin{align*} 
		\Aip v := \Ae v, \quad  D(\Aip)
		=\{
		v\in W^{2,p}_{\text{per}}(\Omega)^2\cap \Xip\colon 
		\restr{\partial_z v}{\Gamma_u}=0, 
		\restr{v}{\Gamma_b}=0, 
		\Ae v\in \Xip
		\}.
	\end{align*}
	It is the aim of this section to prove the following claim.


%

\begin{claim}\label{SemigroupEstimates}
Let $p\in (3,\infty)$. Then 
\\
a) $\Aip$ is the generator of a strongly continuous, analytic semigroup on $\Xip$. 
		%
		%
\\
b) There exist constants $C>0$, $\beta\in\R$ such that for $\partial_i\in \{\partial_x,\partial_y, \partial_z\}$, $t>0$ and $f\in \Xip$ one has that
\begin{align*}
\tag{i} t^{1/2}\lVert \partial_i e^{tA} f\rVert_{L^\infty_H L^p_z(\Omega)}
		\le C e^{\beta t}\lVert f\rVert_{L^\infty_H L^p_z(\Omega)}, \quad t>0, \, f \in \Xip,
\end{align*}
	for $\partial_j\in \{\partial_x,\partial_y\}$
	\begin{align*}
\tag{ii}	t^{1/2}\lVert \partial_j e^{tA}\PP f\rVert_{L^\infty_H L^p_z(\Omega)}
		\le C e^{\beta t}\lVert f\rVert_{L^\infty_H L^p_z(\Omega)}, \quad t>0, \, f \in \Xip, \\
\tag{iii} t^{1/2}\lVert e^{tA}\PP \partial_j f\rVert_{L^\infty_H L^p_z(\Omega)}
		\le C e^{\beta t}\lVert f\rVert_{L^\infty_H L^p_z(\Omega)}, \quad t>0, \, f \in \Xip, \\
\tag{iv} 	t\lVert \partial_i e^{tA}\PP \partial_j f\rVert_{L^\infty_H L^p_z(\Omega)}
		\le C e^{\beta t}\lVert f\rVert_{L^\infty_H L^p_z(\Omega)}, \quad t>0, \, f \in \Xip.			
	\end{align*}
c) $\Xipos$ is an invariant subspace of $\Aip$, and its restriction is $\Aipos$. The semigroup $e^{t\Aip}$ restricts to an exponentially stable, strongly continuous, analytic semigroup of angle $\pi/2$ on $\Xipos$.
\\
d)
Furthermore, for all $v\in \Xipos$ 
		\begin{align*} 
		\lim_{t \to 0+} t^{1/2}\lVert \nabla e^{tA}v\rVert_{L^\infty_H L^p_z(\Omega)}=0.
		\end{align*}
\end{claim}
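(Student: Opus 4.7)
The plan is to reduce everything to the estimates for the Laplacian proved in Section~\ref{sec:laplace} and treat the non-local boundary term in \eqref{eq:Ap} as a perturbation. I would write $\Aip = \Delta + B$ with $Bv := \frac{1}{h}\nabla_H(-\Delta_H)^{-1}\text{div}_H(\partial_z v|_{z=-h})$; the decisive feature is that $Bv$ depends only on the horizontal trace $g := \partial_z v|_{z=-h}$ and is independent of $z$. To solve the resolvent equation $(\lambda - \Aip)v = f$ for $\lambda \in \Sigma_\theta$, I would set $v_0 := (\lambda - \Delta)^{-1} f$ (controlled in $\Xip$ by Lemma~\ref{LemmaAnisotropicLaplaceResolventOmega} and Remark~\ref{rem:AnisotropicLaplaceResolventOmega}) and look for $w = v - v_0$ as the fixed point of $w = (\lambda - \Delta)^{-1} B(v_0 + w)$. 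Since the range of $B$ consists of $z$-independent horizontal fields, this collapses to a fixed-point equation on $L^\infty(G)^2$ for $g$; the operator $\nabla_H(-\Delta_H)^{-1}\text{div}_H = Q - \mathds{1}$, though unbounded on $L^\infty(G)$, becomes bounded when composed with the smoothing of $(\lambda-\Delta)^{-1}$, yielding a contraction constant below $1/2$ for $|\lambda| \ge \lambda_0$. This proves invertibility of $\lambda - \Aip$ on $\Xip$ together with the sectorial resolvent bound $|\lambda|\|v\|_{\Xip} + |\lambda|^{1/2}\|\nabla v\|_{\Xip} \le C\|f\|_{\Xip}$, from which part~(a) follows by standard semigroup theory.

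For the estimates (i)--(iv) in part~(b) I would feed this resolvent bound into the Dunford--Taylor representation $e^{t\Aip} = \frac{1}{2\pi i}\int_\Gamma e^{\lambda t}(\lambda - \Aip)^{-1}\,d\lambda$ along a sectorial contour scaled so that $|\lambda| \sim 1/t$. Estimate (i) is immediate from the gradient part of the resolvent bound. Estimates (ii)--(iv) are subtler because $\PP$ is unbounded on $L^\infty$ and so cannot be cleanly separated from the rest of the operator: I would exploit that $\PP$ has the same structural form as $B$, rearranging each product so that the 2D Helmholtz projector $Q$ always appears paired with a smoothing factor $\nabla_H e^{\tau\Delta_H}$. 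Lemma~\ref{LemmaHorizontalSemigroup} (cf.\ Remark~\ref{rem:HelmholtzLinfty}) then cashes each $Q$ in for a $|\tau|^{-1/2}$ at the cost of a bounded operator on $L^\infty(G)$. Combining this with the tensor factorization $e^{t\Delta}(f\otimes g) = e^{t\Delta_H}f \otimes e^{t\Delta_z}g$, the vertical semigroup bound of Lemma~\ref{LemmaVerticalSemigroup}, and the anisotropic Young inequality (Lemma~\ref{YoungAnisotropic}), the bound for each of (i)--(iv) falls out after careful bookkeeping of scales.

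Part~(c) follows from a direct verification that $\text{div}_H \overline{\Ae v} = 0$ whenever $\text{div}_H \bar v = 0$, using the boundary conditions at $z=0,-h$ together with the algebraic identity $\text{div}_H \nabla_H(-\Delta_H)^{-1}\text{div}_H = -\text{div}_H$; this makes $\Xipos$ invariant under $\Aip$ with restriction $\Aipos$, and the restricted semigroup inherits strong continuity and analyticity of angle $\pi/2$. Exponential stability follows from $0 \in \rho(\Aipos)$, already established in the $L^p$-setting of \cite{GGHHK17}, combined with the sectorial bound. For part~(d), density of $\cD(\Aipos)$ in $\Xipos$ together with the Sobolev embedding $W^{2,p}(\Omega) \hookrightarrow C^1(\overline{\Omega})$ (which uses $p > 3$) gives a dense family of $w \in \Xipos$ with $\nabla w \in \Xip$; for such $w$, analyticity keeps $\|\nabla e^{t\Aip}w\|_{\Xip}$ bounded as $t \to 0$ and hence forces $t^{1/2}\|\nabla e^{t\Aip}w\|_{\Xip} \to 0$. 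An $\varepsilon$-argument combined with estimate (i) upgrades this to arbitrary $v \in \Xipos$.

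The main obstacle is propagating the $L^\infty$-unboundedness of the Helmholtz projector through estimates (ii)--(iv): no single inequality can separate $\PP$ from the resolvent, so each occurrence of $Q$ must be matched exactly once against a $|\tau|^{-1/2}$ smoothing factor from Lemma~\ref{LemmaHorizontalSemigroup}. Managing these pairings throughout the interplay of horizontal and vertical scales --- while still producing $L^\infty_H L^p_z$ bounds with the correct $t$-dependence --- is the technical heart of the proof.
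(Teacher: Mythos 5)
Your decomposition $\Aip = \Delta + B$ is the same as the paper's, but the way you handle $B$ in part~(a) is genuinely different, and that is where a real gap appears. You propose to solve $(\lambda - \Aip)v = f$ by a fixed-point iteration for $g := \partial_z v|_{z=-h}$ on $L^\infty(G)^2$, asserting that $(\lambda-\Delta)^{-1}$ composed with $1-Q = \nabla_H(-\Delta_H)^{-1}\mathrm{div}_H$ is a contraction for large $|\lambda|$. If you unwind this using the tensor factorization $e^{t\Delta} = e^{t\Delta_H}\otimes e^{t\Delta_z}$ and the Laplace-transform representation of the resolvent, the kernel of the map $g\mapsto T_\lambda g$ is governed by the product $\|e^{t\Delta_H}(1-Q)g\|_{L^\infty(G)}\cdot |\partial_z e^{t\Delta_z}1(-h)|$, and each factor behaves like $t^{-1/2}$ as $t\to 0^+$: the vertical factor because of the Dirichlet boundary layer at $z=-h$, the horizontal factor because $(1-Q)g\notin L^\infty$ generically (so $\|e^{t\Delta_H}(1-Q)g\|_{L^\infty}$ must blow up as $t\to 0$). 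The resulting $t^{-1}$ singularity is not integrable and the naive contraction estimate does not close. To rescue it you would need a sharper smoothing estimate, e.g.\ a BMO-to-$L^\infty$ bound with a logarithmic loss, but you never invoke it, and this is not the route the paper takes.

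The paper's argument is cleaner and avoids the fixed point entirely. Since $\Aip\subset \Ap$, the solution $v=(\lambda-\Aip)^{-1}f$ is already known to exist via the $L^p$-theory of \cite{GGHHK17} and lies in $W^{2,p}(\Omega)^2$. The identity $v=(\lambda-\Delta_p)^{-1}(f+Bv)$ holds \emph{a priori}, and $Bv$ is controlled directly by
\begin{align*}
\lVert Bv\rVert_{L^\infty_H L^p_z(\Omega)}\le C\lVert v\rVert_{C^{1,\alpha}(\overline{\Omega})}\le C_p\lVert v\rVert_{W^{2,p}(\Omega)}\le C_p(1+|\lambda|^{-1})\lVert f\rVert_{L^p(\Omega)},
\end{align*}
using that $Q$ is bounded on $C^{0,\alpha}_{\mathrm{per}}([0,1]^2)$ (which your sketch never mentions, yet is precisely the reason $p>3$ enters, via $W^{2,p}\hookrightarrow C^{1,\alpha}$). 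Plugging this into Lemma~\ref{LemmaAnisotropicLaplaceResolventOmega} gives the sectorial resolvent bound in $\Xip$ with no self-referential iteration and no singular kernel.

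Two further remarks. First, the statement of Claim~\ref{SemigroupEstimates}~(b) only asserts estimates (ii)--(iv) for horizontal $\partial_j$; the vertical case is proved separately in Claim~\ref{SemigroupEstimatesHard} and requires a substantially heavier apparatus (anisotropic interpolation, a nearly-optimal logarithmic elliptic estimate, a duality argument with a regularized delta). Your sketch treats the two on an equal footing via the tensor factorization; that works for horizontal derivatives but would not for $\partial_z$. Second, for (ii)--(iv) the paper establishes the $\PP\partial_j$-variant first and then uses the fact that $\nabla_H$ commutes with $\Aip$ and $\PP$; this shortcut is worth recording, as a direct attack on $\partial_i e^{tA}\PP\partial_j$ is more delicate. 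Your treatment of (c) and (d) is consistent with the paper's.
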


In order to  solve equation \eqref{eq:hydrostaticStokes} in $\Xipos$, we collect first several facts concerning the  corresponding theory in $\lpso$. To this end,  
let $p\in (1,\infty)$, and define $\Apos\colon D(\Apos) \rightarrow \lpso$ by 
\begin{align*}
		\Apos v :=\PP\Delta v, \quad  D(\Apos)=\{
		v\in W^{2,p}_{\text{per}}(\Omega)^2: 
		\text{div}_H \overline{v}=0,
		\restr{\partial_z v}{\Gamma_u}=0, 
		\restr{v}{\Gamma_b}=0.
		\}.
\end{align*}
Consider furthermore $\Ap\colon D(A_p)\to L^p(\Omega)^2$ defined by
		\begin{align*}
		\Ap v:=\Delta_p v+B v, \quad  D(A_p):=D(\Delta_p)^2,
		\quad 
		Bv:=\frac{1}{h}(1-Q)\restr{\partial_z v}{\Gamma_b}, 
		\end{align*}
where $\Delta_p$ denotes the Laplacian in $L^p(\Omega)^2$ as in the last section. By \cite{GGHHK17}, the operator $\Ap$ is an extension of $\Apos$. The idea is that the pressure term may be recovered by applying the vertical average and horizontal divergence to \eqref{eq:hydrostaticStokes}, yielding
		\begin{align}\label{eq:LinearPressureWeak}
		\Delta_H \pi = \text{div}_H \overline{f}
		-\text{div}_H\frac{1}{h}\restr{\partial_z v}{\Gamma_b}, 
		\end{align}
or equivalently since $1-Q$ agrees with $\nabla_H (-\Delta_H)^{-1}\text{div}_H$ one has $\nabla_H \pi=(1-Q)\overline{f}-Bv$. 

Note that the following inclusions hold
\begin{align}\label{eq:domaininclusion}
\Aip \subset \Ap \quad \hbox{and} \quad \Aipos \subset \Apos,
\end{align}
and that $e^{t\Apos}$, $e^{tA_p}$, $e^{t\Aip}$ and $e^{t\Aipos}$ are consistent semigroups. 

%

\begin{proof}[Proof of Claim~\ref{SemigroupEstimates}]
%

%
Let $\lambda_0>0$ with $\lambda_0\in \rho(A_p)$, $\theta\in (0,\pi/2)$, and 
		%
		\[
		\lambda \in
		\Sigma_{\theta+\pi/2} \cap B_{\lambda_0}(0)^c\subset \rho(A_p).
		\]
		%
By \eqref{eq:domaininclusion} it follows that $\lambda-\Aip$ is injective for $\lambda \in \rho(A_p)$ and likewise $\lambda-\Aipos$ is injective for $\lambda \in \rho(A^\os_p)$.
Since $\Xip\hookrightarrow L^p(\Omega)^2$ the existence of a unique $v\in D(A_p)$ for $p\in (1,\infty)$ follows from the $L^p$-theory for $A_p$, cf. \cite{GGHHK17},
and since $W^{2,p}_{\text{per}}(\Omega)^2 \hookrightarrow \Xip$ for $p\in (3/2,\infty)$ it follows that $v\in D(\Aip)$. Since 
$(A_p-\lambda)^{-1}$ further leaves $\lpso$ invariant, $f\in \Xipos$ implies $v\in D(\Aipos)$.
Hence, 
		\begin{align}\label{eq: resolvent inclusions}
		\rho(\Ap)\subset \rho(\Aip) \quad \text{ and } \quad 
		\rho(\Apos)\subset \rho(\Aipos).
		\end{align}
In particular the resolvent sets are non-empty and thus the operators are closed.

 Since the semigroup estimates follow from resolvent estimates by arguments involving the inverse Laplace transform, it now remains to prove suitable resolvent estimates in $\Xip$. To this end we observe first that $v=(\lambda-\Aip)^{-1}f$ is equivalent to
\begin{align}\label{eq:repv}
v=(\lambda-\Delta_p)^{-1}(f+Bv),
\end{align}
and second, using the fact that $Q$ is continuous on $C_{per}^{0,\alpha}([0,1]^2)$ for $\alpha\in (0,1)$, that
		\begin{align*}
		\lVert Bv\rVert _{L^\infty_H L^p_z(\Omega)}
		\le h^{1/p} \lVert Bv\rVert _{L^\infty(\Omega)}
		\le h^{1/p} \lVert Bv\rVert_{C^{0,\alpha}({[0,1]^2)}}
		\le C \lVert \restr{\partial_z v}{\Gamma_b}\rVert_{C^{0,\alpha}([0,1]^2)}
		\le C \lVert v\rVert_{C^{1,\alpha}(\overline{\Omega})}.
		\end{align*}
Assuming $p\in (3,\infty)$ we have $W^{2,p}(\Omega)\hookrightarrow C^{1,\alpha}(\overline{\Omega})$ for some $\alpha=\alpha_p\in (0,1-3/p)$. 
Using the resolvent estimate for $\Ap$ in $L^p(\Omega)^2$ we obtain
		\[
		\lVert v\rVert_{C^{1,\alpha}(\overline{\Omega})}
		\le C_p \lVert v\rVert_{W^{2,p}(\Omega)}
		\le C_p \left( 
		\lVert v\rVert_{L^p(\Omega)}+\lVert A v\rVert_{L^p(\Omega)}
		\right)
		\le C_p (1+\lvert \lambda\rvert^{-1}) \lVert f\rVert_{L^p(\Omega)}.
		\]
This and $\lvert \lambda\rvert>\lambda_0$ yield $\lVert Bv\rVert _{L^\infty_H L^p_z(\Omega)}\le C_p (1+\lambda_0^{-1})\lVert f\rVert_{L^\infty_H L^p_z(\Omega)}$.
So, using Lemma~\ref{LemmaAnisotropicLaplaceResolventOmega} we obtain
		\begin{align} \label{eq1: conclusion of resolvent estimate}
		\lvert \lambda\rvert\cdot \lVert v\rVert_{L^\infty_H L^p_z(\Omega)}
		+\lvert \lambda\rvert^{1/2} \lVert \nabla v\rVert_{L^\infty_H L^p_z(\Omega)}
		+ \lVert \Aip v\rVert_{L^\infty_H L^p_z(\Omega)}
		&\le C_{\theta,p,\lambda_0}\lVert f\rVert_{L^\infty_H L^p_z(\Omega)},
		\end{align}
where we used that for $\lambda$ as above and $p\in (3,\infty)$ one has
		\begin{align*}
		\lVert \Aip v\rVert_{L^\infty_H L^p_z(\Omega)}
		\le \lVert \Delta v\rVert_{L^\infty_H L^p_z(\Omega)}
		+\lVert Bv\rVert_{L^\infty_H L^p_z(\Omega)}
		\le C_{\theta,p,\lambda_0}\lVert f\rVert_{L^\infty_H L^p_z(\Omega)}.
		\end{align*}

Note that if one instead considers $f\in \Xipos$, then $\lambda_0>0$ can be taken to be arbitrarily small and $\theta$ arbitrarily close to $\pi/2$ by \cite[Theorem 3.1]{GGHHK17}.
%
Since $0\in \rho(\Apos)\subset \rho(\Aipos)$, compare \cite[Theorem 3.1]{HieberKashiwabara2015}
 and \eqref{eq: resolvent inclusions} 
it follows that the spectral bound
		\[
		\beta:=\sup\{
		\text{Re}(\lambda): \lambda\in \sigma(\Aipos)
		\}
		\]
is negative implying 
exponential decay, 
and estimate \eqref{eq1: conclusion of resolvent estimate} is valid for all $\lambda\in\Sigma_\theta$, $\theta\in(0,\pi)$ and $f\in \Xipos$. 

To verify that $D(\Aip)$ and $D(\Aipos)$ are dense in $\Xip$ and $\Xipos$ respectively, observe that the space
		\[
		C^\infty_{\text{per}}([0,1]^2;C^\infty_c((-h,0)))^2
		\]
is contained in $D(\Aip)$ and dense in $\Xip$, so the semigroup generated by $\Aip$ is strongly continuous on $\Xip$. Since it leaves $\lpso$ invariant, the restriction of the semigroup on $\Xip\cap \lpso=\Xipos$ is strongly continuous as well and generated by the restriction of $\Aip$ onto $D(\Aip)\cap \lpso=D(\Aipos)$, i.e. $\Aipos$, which is therefore densely defined on $\Xipos$. Thus we have proven $a)$, $c)$ and estimate $(i)$ in $b)$.

To prove the remaining semigroup estimates in $b)$ we consider the corresponding resolvent estimates. 
Since $\Xip\hookrightarrow L^p(\Omega)^2$ and $\PP$ is bounded on $L^p(\Omega)^2$ the existence of 
		\[
		v:=(\lambda-\Apos)^{-1}\PP f\in D(\Apos)
		\hookrightarrow W^{2,p}_{\text{per}}(\Omega)^2
		\hookrightarrow \Xip
		\]
for $f\in \Xip$ follows from 
the $L^p$-theory for $\Apos$,
and it suffices to extend 
the $L^p$-estimate 
	\begin{align}\label{eq:LpEstimateDerivatives}
	\lvert \lambda \rvert^{1/2}\lVert \partial_i(\lambda-\Apos)^{-1} f\rVert_{L^p(\Omega)}
	+\lvert \lambda \rvert^{1/2}\lVert 
	(\lambda-\Apos)^{-1}\partial_i f\rVert_{L^p(\Omega)}
	\le C_{\theta,p} \lVert f\rVert_{L^p(\Omega)}, \quad f\in \lpso,
	\end{align}
	where $\partial_i\in\{\partial_x,\partial_y\}$, 
	$\theta \in (0,\pi)$, $C_{\theta,p}>0$,
to $\Xip$, i.e. to prove the estimate
		\begin{align} \label{eq: resolvent estimate for horizontal derivative}
		\lvert \lambda \rvert^{1/2} \lVert \nabla_H v\rVert_{L^\infty_H L^p_z(\Omega)}
		\le C_{\theta,p}\lVert f\rVert_{L^\infty_H L^p_z(\Omega)}, 
		\quad \lambda \in \Sigma_\theta.
		\end{align}
Recall that $\PP f=f-(1-Q)\overline{f}=\tilde{f}+Q\overline{f}$, 
		%
and that if $f\in \Xip$ then $\overline{f}\in C_{\text{per}}([0,1]^2)^2$ satisfies $\lVert \overline{f}\rVert_{\infty}\le C \lVert f\rVert_{L^\infty_H L^p_z(\Omega)}$ for any $p\in[1,\infty]$.
Using \eqref{eq:repv} we rewrite
		\[
		v=(\lambda-\Aipos)^{-1}\PP f=(\lambda-\Delta)^{-1}(\tilde{f}+Bv+Q\overline{f}),
		\]
and since the term $\tilde{f}+Bv$ can be dealt with as before, it suffices to show the estimate
		\begin{align}
		\lvert \lambda\rvert^{1/2} \lVert 
		\nabla_H (\lambda-\Delta)^{-1}Q\overline{f}
		\rVert_{L^\infty_H L^p_z(\Omega)}
		\le C_\theta \lVert \overline{f}\rVert_{L^\infty(G)}.
		\end{align}
Since $Q\overline{f}$ does not depend on $z$ we can write $Q\overline{f}=Q\overline{f}\otimes 1$, and so for $\lambda=\lvert \lambda\rvert e^{i\psi}$ with $\psi\in(-\pi/2+\varepsilon,\pi/2-\varepsilon)$ for small $\varepsilon>0$ we have
		\begin{align*}
		\lvert \lambda\rvert^{1/2}\nabla_H(\lambda-\Delta)^{-1}
		\left(Q\overline{f}\otimes 1\right)
		=\lvert \lambda\rvert^{1/2}\int_0^\infty e^{-\lambda t}
		\left(
		\nabla_H e^{t\Delta_H}Q\overline{f}\otimes e^{t\Delta_z}1
		\right)
		\,dt,
		\end{align*}
where $e^{t\Delta_z}$ denotes the semigroup from Lemma~\ref{LemmaVerticalSemigroup}. Applying the estimates in Lemma~\ref{LemmaHorizontalSemigroup} and \ref{LemmaVerticalSemigroup} yields
		\begin{align*}
		\lvert \lambda\rvert^{1/2}\lVert 
		\nabla_H(\lambda-\Delta)^{-1}\left(Q\overline{f}\otimes 1\right)
		\rVert_{L^\infty_H L^p_z(\Omega)}
		&\le
		\lvert \lambda\rvert^{1/2}\int_0^\infty e^{-\lambda t}
		\lVert \nabla_H e^{t\Delta_H}Q\overline{f}\rVert_{L^\infty(G)}
		\lVert e^{t\Delta_z}1\rVert_{L^p(-h,0)}
		\,dt
		\\
		&\le C \lvert \lambda\rvert^{1/2}
		\left(
		\int_0^\infty e^{-\lvert\lambda\rvert\cos(\psi) t}t^{-1/2}\,dt 
		\right)
		\lVert \overline{f}\rVert_{L^\infty(G)}
		\\
		&\le C \frac{\sqrt{\pi}}{\sqrt{\cos(\pi/2-\varepsilon)}}
		\lVert \overline{f}\rVert_{L^\infty(G)}.
		\end{align*}
To include the full range of angles $\psi$ one simply replaces $\Delta_H$ and $\Delta_z$ with $e^{i\theta}\Delta_H$ and $e^{i\theta}\Delta_z$ respectively where $\theta\in(-\pi/2,\pi/2)$ is a suitable angle.

Since an elementary calculation shows that $\nabla_H$ commutes with $\Aip$ and $\PP$ we obtain
		\[
		\partial_i (\lambda-\Aip)^{-1} f
		=(\lambda-\Aip)^{-1}\partial_i f, \quad
		\partial_i (\lambda-\Aip)^{-1}\PP f
		=(\lambda-\Aip)^{-1}\PP\partial_i f
		\]
for horizontal derivatives $\partial_i\in\{\partial_x,\partial_y\}$ and $f\in C^\infty_{\text{per}}([0,1]^2;C^\infty_c[-h,0])^2$. Note that for any $v\in W^{2,p}_{\text{per}}(\Omega)$ the horizontal derivatives $\partial_x v$ and $\partial_y v$ are periodic on $\Gamma_l$ as well. This yields suitable estimates for the right-hand sides. 

To verify $d)$, we first make use of the density of the domains of the generators. So, let $\varepsilon>0$ and $v'\in D(\Aipos)$ such that $\lVert v-v'\rVert_{L^\infty_H L^p_z(\Omega)}<\varepsilon/2C_0$. By $b)$ $(i)$ we have
\[
t^{1/2}\lVert \nabla e^{t\Aip} v\rVert_{L^\infty_H L^p_z(\Omega)}
\le C_0 \lVert v\rVert_{L^\infty_H L^p_z(\Omega)}
\]
for all $v\in \Xip$ and $t>0$. Then
\[
t^{1/2}\lVert \nabla e^{t\Aip}v\rVert_{L^\infty_H L^p_z(\Omega)}
\le \frac{\varepsilon}{2}+t^{1/2}\lVert \nabla e^{t\Aip}v'\rVert_{L^\infty_H L^p_z(\Omega)}
\] 
and we can further estimate
\[
\lVert \nabla e^{t\Aip}v'\rVert_{L^\infty_H L^p_z(\Omega)}
\le h^{1/p}\lVert e^{t\Aip}v'\rVert_{C^1(\overline{\Omega})}
\le C_p \lVert e^{t\Aip}v'\rVert_{D(\Apos)}.
\]
This and the invertibility of $\Apos$ on $\lpso$ yield
\begin{align*}
t^{1/2}\lVert \nabla e^{t \Apos}v'\rVert_{\lpso}
\le C_p t^{1/2} \lVert \Apos e^{t\Apos}v'\rVert_{\lpso}
=C_p t^{1/2}\lVert  e^{t\Apos}\Apos v'\rVert_{\lpso} \leq C_p t^{1/2}\lVert \Apos v'\rVert_{\lpso}
\end{align*}
and since $\Apos v'\in \lpso$ the claim follows.
\end{proof}

\section{Linear estimates for the hydrostatic Stokes operator: part 2}\label{sec:stokesHard}

This section is devoted to prove that the estimates of Claim~\ref{SemigroupEstimates} in the case of vertical derivatives, i.e.
that the estimates (ii), (iii) and (iv) in Claim~\ref{SemigroupEstimates} are valid even for $\partial_j=\partial_z$.
\begin{claim}\label{SemigroupEstimatesHard}
Under the assumptions of Claim~\ref{SemigroupEstimates} 
there exist constants $C>0$ and $\beta\in \R$ such that

	\begin{align}\label{eq:FirstVerticalEstimate}
	t^{1/2}\lVert \partial_z e^{tA}\mathbb{P}f\rVert_{L^\infty_H L^p_z(\Omega)}
	&\le C e^{t\beta}\lVert f\rVert_{L^\infty_H L^p_z (\Omega)}, \\
	\label{eq:SecondVerticalEstimate}
	t^{1/2}\lVert e^{tA}\mathbb{P}\partial_zf\rVert_{L^\infty_H L^p_z(\Omega)}
	&\le C e^{t\beta}\lVert f\rVert_{L^\infty_H L^p_z (\Omega)}, \\
	\label{eq:ThirdVerticalEstimate}
	t\lVert \partial_i e^{tA}\PP \partial_j f\rVert_{L^\infty_H L^p_z(\Omega)} 
	&\le C e^{\beta t}\lVert f\rVert_{L^\infty_H L^p_z(\Omega)},
		\end{align}		
where $\partial_i, \partial_j\in \{\partial_x,\partial_y,\partial_z\}$,
for all $t>0$ and $f\in \Xip$. 
\end{claim}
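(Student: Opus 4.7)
The plan is to reduce each of the three estimates to a corresponding resolvent bound of the form
$|\lambda|^{1/2}\|\partial_z(\lambda - \Aip)^{-1}\mathbb{P}f\|_{L^\infty_H L^p_z}\le C\|f\|_{L^\infty_H L^p_z}$
(and analogous bounds for $(\lambda-\Aip)^{-1}\mathbb{P}\partial_z$ and $\partial_i(\lambda-\Aip)^{-1}\mathbb{P}\partial_j$), and then recover the semigroup estimates by the Dunford/inverse Laplace representation along a sector, exactly as is done for the horizontal derivatives in the proof of Claim~\ref{SemigroupEstimates}. The core obstruction, which is absent for horizontal derivatives, is that $\partial_z$ does not commute with $\Aip$ because of the mixed Dirichlet--Neumann boundary conditions in the vertical direction.

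For \eqref{eq:FirstVerticalEstimate}, set $v=(\lambda-\Aip)^{-1}\mathbb{P}f$. As in the proof of Claim~\ref{SemigroupEstimates}, $v$ satisfies $(\lambda-\Delta)v=\widetilde f+Bv+Q\overline{f}$. Since both $Bv$ and $Q\overline{f}$ are independent of $z$, the function $w:=\partial_z v$ formally solves $(\lambda-\Delta)w=\partial_z\widetilde f=\partial_z f$, now with Dirichlet condition at $\Gamma_u$ and a nonhomogeneous Neumann-type condition at $\Gamma_b$ obtained by restricting the original equation (together with $v|_{\Gamma_b}=0$) to the bottom. The swapped-boundary version of Lemma~\ref{LemmaAnisotropicLaplaceResolventOmega} pointed out in Remark~\ref{rem:AnisotropicLaplaceResolventOmega}, applied in the form \eqref{eq:AnisotropicResolventDerivativeOmega}, controls the interior contribution by $|\lambda|^{-1/2}\|f\|_{L^\infty_H L^p_z}$; the boundary contribution $\partial_z^2 v|_{\Gamma_b}=-\mathbb{P}f|_{\Gamma_b}-Bv$ is absorbed using the $L^p$-resolvent estimate for $A_p$ together with the embedding $W^{2,p}(\Omega)\hookrightarrow C^{1,\alpha}(\overline{\Omega})$ for $p>3$, which is exactly where the constraint $p>3$ enters.

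For \eqref{eq:SecondVerticalEstimate}, the resolvent equation for $v=(\lambda-\Aip)^{-1}\mathbb{P}\partial_z f$ reads $(\lambda-\Delta)v=\partial_z f-(1-Q)\overline{\partial_z f}+Bv$. The first term is handled directly by \eqref{eq:AnisotropicResolventDerivativeOmega}, and the $Bv$ term is treated exactly as in the proof of Claim~\ref{SemigroupEstimates}, via $C^{0,\alpha}$-boundedness of $Q$ and the Sobolev embedding. The remaining contribution $(\lambda-\Delta)^{-1}(1-Q)\overline{\partial_z f}$ has no $z$-dependence on the right, so it splits as a tensor product $(1-Q)\overline{\partial_z f}\otimes 1$; arguing by density with $f\in C^\infty_\text{per}([0,1]^2;C^\infty_c(-h,0))^2$ so that the formal traces of $f$ at $z=0,-h$ are legitimate, I would represent the resolvent as a Laplace integral $\int_0^\infty e^{-\lambda t}(e^{t\Delta_H}(1-Q)\overline{\partial_z f}\otimes e^{t\Delta_z}1)\,dt$ and invoke the type of argument used in Lemma~\ref{LemmaHorizontalSemigroup}, where $(1-Q)=\nabla_H(-\Delta_H)^{-1}\mathrm{div}_H$ combines with the horizontal heat semigroup to produce an $L^\infty(G)$-bounded Fourier multiplier family with integrable $t^{-1/2}$-decay; the density argument then delivers a bound depending only on $\|f\|_{L^\infty_H L^p_z}$ and not on traces.

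For \eqref{eq:ThirdVerticalEstimate}, I would use the semigroup factorisation $\partial_i e^{t\Aip}\mathbb{P}\partial_j f=(\partial_i e^{(t/2)\Aip})\bigl(e^{(t/2)\Aip}\mathbb{P}\partial_j f\bigr)$, estimate the first factor by \eqref{eq:FirstVerticalEstimate} if $\partial_i=\partial_z$ and by Claim~\ref{SemigroupEstimates}\,(b)(i) if $\partial_i$ is horizontal, and estimate the second factor in $L^\infty_H L^p_z$ by \eqref{eq:SecondVerticalEstimate}; multiplying the two bounds gives the required $t^{-1}$-decay with an exponential factor $e^{\beta t}$. I expect the main obstacle to be the trace term in Step 2: handling $(1-Q)\overline{\partial_z f}$ without any differentiability assumption on $f$, which is why the combination with the tensor-product argument coming from Lemma~\ref{LemmaHorizontalSemigroup} is essential.
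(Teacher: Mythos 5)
Your plan for \eqref{eq:ThirdVerticalEstimate} (the $t/2$-factorisation using \eqref{eq:FirstVerticalEstimate} or Claim~\ref{SemigroupEstimates}\,$b(i)$ for the outer derivative and \eqref{eq:SecondVerticalEstimate} for the inner factor) coincides with what the paper does and is fine, but your routes to \eqref{eq:FirstVerticalEstimate} and \eqref{eq:SecondVerticalEstimate} both have genuine gaps, and the paper's actual arguments are substantially different.

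For \eqref{eq:FirstVerticalEstimate}, differentiating $(\lambda-\Delta)v=\tilde f+Q\overline f+Bv$ in $z$ does give $(\lambda-\Delta)w=\partial_z f$ with $w|_{\Gamma_u}=0$, but the induced Neumann condition at $\Gamma_b$ is $\partial_z w|_{\Gamma_b}=-\PP f|_{\Gamma_b}-Bv=(1-Q)\overline f-Bv$ (for $f$ vanishing near $\Gamma_b$). The lift of this Neumann data has to be controlled in $L^\infty_H L^p_z$, and that requires $\|(1-Q)\overline f\|_{L^\infty(G)}\lesssim\|\overline f\|_{L^\infty(G)}$ --- i.e.\ exactly the $L^\infty$-boundedness of $Q$ which fails. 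The $L^p$-resolvent estimate plus $W^{2,p}\hookrightarrow C^{1,\alpha}$ handles $Bv$ as in Claim~\ref{SemigroupEstimates}, but it does nothing for the $(1-Q)\overline f$ piece of the boundary trace, which is the core difficulty. The paper avoids it entirely: $v$ is split into $v_1$ (pressure from the boundary term $Bv$, handled as in Section~\ref{sec:stokesEasy}) and $v_2$ (pressure from $\mathrm{div}_H\,\overline f$, i.e.\ exactly the $Q$-piece), and Proposition~\ref{vtwoestimate} bounds $\partial_z v_2$ by a delicate localisation on cylinders of horizontal radius $\sim|\lambda|^{-1/2}$, combining the anisotropic interpolation inequality of Lemma~\ref{lem: anisotropic interpolation inequality} with the nearly-optimal \emph{local} $L^p$ bound for $\nabla_H(-\Delta_H)^{-1}\mathrm{div}_H$ carrying a $\log r$ loss (Proposition~\ref{prop: nearly-optimal estimate for laplacian}); the extra $|\lambda|^{-1/2p}$ gained from interpolation absorbs the logarithm. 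None of that machinery appears in your proposal.

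For \eqref{eq:SecondVerticalEstimate}, first note the term you are concerned about, $(1-Q)\overline{\partial_z f}$, is simply zero on the dense class $C^\infty_{\mathrm{per}}([0,1]^2;C^\infty_c(-h,0))^2$ since $\overline{\partial_z f}=\tfrac1h(f|_{z=0}-f|_{z=-h})=0$; this is \eqref{eq:FixedByProjection}, so no tensor-product argument is needed for it. The real obstruction is the $Bv$ term, and here "treated exactly as in Claim~\ref{SemigroupEstimates}" does not work: that treatment bounds $\|Bv\|\lesssim\|v\|_{C^{1,\alpha}(\overline\Omega)}\lesssim\|v\|_{W^{2,p}}\lesssim\|A_pv\|_{L^p}+\|v\|_{L^p}$, which is closed there because the forcing is $f$ itself and so $\|A_pv\|_{L^p}\lesssim\|f\|_{L^p}$. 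With forcing $\PP\partial_z f$ one has $A_{p,\os}v=\lambda v-\PP\partial_z f$ with $\PP\partial_z f\notin L^p$ for $f\in L^p$, so $\|v\|_{W^{2,p}}$ cannot be bounded by $\|f\|_{L^p}$; $v$ is only in $D((-A_{p,\os})^{1/2})\subset W^{1,p}$, which does not give a trace of $\partial_z v$ in $C^{0,\alpha}$. The paper sidesteps this completely: Proposition~\ref{thm: 2nd vertical derivative estimate} estimates $\|v\|_{L^\infty_H L^p_z}$ directly by a duality argument, pairing against the dual resolvent problem \eqref{eq: dual problem} with data the regularised delta $\delta_{\varepsilon,x_0'}|v|^{p-2}v^*$, and using the $L^1_HL^q_z$-estimate of Proposition~\ref{prop: L1Lq estimate for dual problem}. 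This is a genuinely different mechanism, borrowed from $L^\infty$-error analysis in finite elements, and you would need to discover something equivalent to close your gap.
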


As in the last section, these semigroup estimates follow from suitable resolvent estimates and 
standard arguments involving the inverse Laplace transform.  

%

Before investigating the estimate for $\partial_z (\lambda-\Aip)\mathbb{P}$ we present an anisotropic version of an interpolation inequality.
We use the notation $(x,y,z) =: (x', z)$ and let $B(x_0'; r) = \{ x' \in \mathbb R^2 : |x' - x_0'| < r \}$ denote a disk in $\mathbb R^2$.

\begin{lemma} \label{lem: anisotropic interpolation inequality}
Let $p \in (2, \infty)$, $q \in [1,\infty]$, $r>0$, and $x_0' \in \mathbb R^2$.
Then, for $v \in W^{1,p}(B(x_0'; r); L^q_z)$, $L^q_z=L^q(-h,0)$ we have
		\begin{equation*}
		\|v\|_{L^\infty(B(x_0'; r); L^q_z)} 
		\le C r^{-2/p} (
		\|v\|_{L^p(B(x_0'; r); L^q_z)} + r\|\nabla_Hv\|_{L^p(B(x_0'; r); L^q_z)}
		),
		\end{equation*}
where the constant $C = C_{\Omega, p, q}>0$ is independent of $r$ and $x_0'$.
\end{lemma}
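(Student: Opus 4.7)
The plan is to reduce to the unit disk by scaling and then invoke a Banach-space-valued Morrey / Sobolev embedding with the Banach space $E := L^q(-h,0)$.

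First, set $\tilde v(y') := v(x_0' + r y')$ for $y' \in B(0;1)$, regarded as a function with values in $E$. A direct change of variables gives the scaling relations
\begin{align*}
\|v\|_{L^\infty(B(x_0';r); E)} &= \|\tilde v\|_{L^\infty(B(0;1); E)}, \\
\|v\|_{L^p(B(x_0';r); E)} &= r^{2/p}\,\|\tilde v\|_{L^p(B(0;1); E)}, \\
\|\nabla_H v\|_{L^p(B(x_0';r); E)} &= r^{2/p - 1}\,\|\nabla_H \tilde v\|_{L^p(B(0;1); E)}.
\end{align*}
Hence it suffices to prove
\begin{equation}\label{eq:interpUnit}
\|\tilde v\|_{L^\infty(B(0;1); E)} \le C_{p,q}\bigl( \|\tilde v\|_{L^p(B(0;1); E)} + \|\nabla_H \tilde v\|_{L^p(B(0;1); E)} \bigr),
\end{equation}
after which the stated inequality follows by multiplying through by $r^{-2/p}$.

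Next, I would establish \eqref{eq:interpUnit} by the usual Morrey argument, carried out with $E$-valued (Bochner) functions. By a standard mollification argument it suffices to consider $\tilde v \in C^1(\overline{B(0;1)}; E)$. For $x', y' \in B(0;1)$, the fundamental theorem of calculus in Bochner form gives
\begin{equation*}
\tilde v(x') - \tilde v(y') = \int_0^1 \nabla_H \tilde v\bigl(y' + t(x' - y')\bigr) \cdot (x' - y')\, dt,
\end{equation*}
so taking norms in $E$ and applying the triangle inequality yields
\begin{equation*}
\|\tilde v(x')\|_E \le \|\tilde v(y')\|_E + |x' - y'| \int_0^1 \bigl\|\nabla_H \tilde v\bigl(y' + t(x'-y')\bigr)\bigr\|_E \, dt .
\end{equation*}
Averaging in $y'$ over $B(0;1)$, using the substitution $\eta = y' + t(x'-y')$ (which introduces a Jacobian bounded below since $B(0;1)$ is convex and contains the segment joining $x'$ and $y'$), and applying Hölder's inequality in $y'$ with exponent $p$ — precisely as in the classical scalar Morrey estimate — produces
\begin{equation*}
\|\tilde v(x')\|_E \le C_p \bigl( \|\tilde v\|_{L^p(B(0;1); E)} + \|\nabla_H \tilde v\|_{L^p(B(0;1); E)} \bigr),
\end{equation*}
where the condition $p > 2$ ensures that the resulting Riesz-type integral
$\int_{B(0;1)} |x' - y'|^{-(2-1)\cdot p/(p-1)}\, dy'$
is finite uniformly in $x'$. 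Taking the supremum over $x' \in B(0;1)$ yields \eqref{eq:interpUnit} with a constant depending only on $p$ (and independent of the Banach space $E$, hence of $q$).

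The only subtlety — which is really the main point to check — is that the classical Morrey argument works verbatim for Bochner-integrable $E$-valued functions: norms in $E$ can be pulled inside Bochner integrals, the fundamental theorem of calculus holds for $E$-valued $C^1$ functions, and no Hilbert-space or reflexivity assumption on $E$ is required. Granted this, combining \eqref{eq:interpUnit} with the scaling identities above completes the proof.
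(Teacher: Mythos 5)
Your proposal is correct, but it takes a genuinely different route from the paper's proof. The paper avoids any explicit Banach--space--valued Sobolev theory: it sets $w(x') := \|v(x',\cdot)\|_{L^q_z}$, applies a \emph{scalar} two-dimensional interpolation inequality (cited from Lunardi) to $w$, and then closes the argument with the pointwise bound $|\partial_i w(x')| \le \|\partial_i v(x',\cdot)\|_{L^q_z}$, which follows from differentiating the $L^q$-norm and applying H\"older in $z$. Your argument instead rescales to the unit disk and runs the classical Morrey chain--of--inequalities directly for $E$-valued Bochner functions, observing that nothing in that argument needs $E$ to be a Hilbert or reflexive space. Both approaches hinge on $p>2$ (convergence of the two-dimensional Riesz-type integral with exponent $p/(p-1)$). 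What the paper's route buys is a purely scalar, self-contained reduction that sidesteps density-of-smooth-functions questions in $W^{1,p}(B;E)$ and the chain-rule-in-Bochner-spaces machinery; what yours buys is conceptual cleanliness (the constant is visibly independent of $E$, hence of $q$, and the argument never depends on $E$ being an $L^q$ space). One small cosmetic point: you label the constant in \eqref{eq:interpUnit} as $C_{p,q}$ but then correctly note it is independent of $E$, so it should simply be $C_p$. Also, your appeal to mollification requires the (standard, but worth stating) fact that $C^1(\overline{B(0;1)};E)$ is dense in $W^{1,p}(B(0;1);E)$ for $1\le p<\infty$; this is fine here since $p\in(2,\infty)$.
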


\begin{proof}
We put $w(x') := (\int_{-h}^0 |v(x', z)|^q \, dz)^{1/q}$ and apply a two-dimensional interpolation inequality, compare \cite[Lemma 3.1.4]{Lunardi1995} to have
		\begin{equation} \label{eq1: prf of anisotropic interpolation inequality}
		\|w\|_{L^\infty(B(x_0'; r))} 
		\le C r^{-2/p} (
		\|w\|_{L^p(B(x_0'; r))} + r\|\nabla_Hw\|_{L^p(B(x_0'; r))}
		).
		\end{equation}
One sees that $\|w\|_{L^p(B(x_0'; r))} = \|v\|_{L^p(B(x_0'; r); L^q_z)}$.
To estimate the second term we compute $\partial_i w$ for $\partial_i\in\{\partial_x,\partial_y\}$ as follows:
		\begin{align*}
		\partial_i w(x') 
		= \left( \int_{-h}^0 |v(x', z)|^q \, dz \right)^{1/q - 1} 
		\int_{-h}^0 |v(x', z)|^{q - 2} (\partial_i v(x', z) \cdot v(x', z)) \, dz.
		\end{align*}
Using H{\"o}lder's inquality we obtain
		\begin{align*}
		|\partial_i w(x')| 
		\le \left( \int_{-h}^0 |v(x', z)|^q \, dz \right)^{1/q-1} 
		\int_{-h}^0 |v(x', z)|^{q - 1} |\partial_i v(x', z)| \, dz 
		\le \left( \int_{-h}^0 |\partial_i v(x', z)|^q \, dz \right)^{1/q}
		\end{align*}
and substituting this into \eqref{eq1: prf of anisotropic interpolation inequality} proves the estimate for $q<\infty$.
The case $q = \infty$ is a straightforward result of \eqref{eq1: prf of anisotropic interpolation inequality}.
\end{proof}
%
%
%
%

It is well known that $1-Q=-\nabla_H (-\Delta_H)^{-1} \mathrm{div}_H=\nabla_H \Delta_H^{-1} \mathrm{div}_H$ with periodic boundary conditions is a singular integral operator which fails to be bounded in $L^\infty(G)^2$.
However, if one allows for a logarithmic (and therefore divergent) factor, some $L^\infty$-type estimate are still available.
In this spirit we give a local $L^p$-estimate for the operator $\nabla_H (-\Delta_H)^{-1} \mathrm{div}_H$ corresponding to the scale of the $L^\infty$-norm.

\begin{proposition} \label{prop: nearly-optimal estimate for laplacian}
Let $p \in (1,\infty)$, $x_0' \in G$. Then there exists $r_0>0$ such that for all $r\in (0,r_0)$ the weak solution of
		\begin{equation} \label{eq: Laplace equation}
		\Delta_H \pi = \mathrm{div}_H F 
		\quad\text{in}\quad G, \qquad 
		\pi|_{\partial G}: \text{ periodic}, 
		\qquad \int_G \pi \, dx' = 0,
		\end{equation}
for $F \in L^\infty(G)^2$ satisfies
		\begin{equation*}
		\|\nabla_H \pi\|_{L^p(B(x_0'; r))} 
		\le C r^{2/p} (1 + |\log r|) \|F\|_{L^\infty(G)}.
		\end{equation*}
Here the constant $C = C_{G, p}>0$ is independent of $x_0'$ and $r$.
\end{proposition}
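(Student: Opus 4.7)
The strategy is to split $F$ into a piece supported near $x_0'$ and a far-field piece, and to estimate the corresponding pressures separately. Choose $r_0>0$ small enough that $B(x_0';2r)\subset G$ for all $r\in(0,r_0)$ when $G$ is regarded as the torus $\T^2$. Write $F=F_1+F_2$ with $F_1:=F\chi_{B(x_0';2r)}$ and $F_2:=F-F_1$, and let $\pi_j$ ($j=1,2$) be the mean-zero periodic weak solution of $\Delta_H\pi_j=\mathrm{div}_H F_j$. Since $F_1$ vanishes near $\partial G$ and $\int_G \mathrm{div}_H F=0$ by periodicity, both right-hand sides have zero integral on $G$, so these solutions exist uniquely and $\pi=\pi_1+\pi_2$ by linearity.

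For the near-field piece I would invoke the $L^p(G)$-boundedness of the two-dimensional periodic Helmholtz projection (equivalently, of the operator $F\mapsto\nabla_H(-\Delta_H)^{-1}\mathrm{div}_H F$) for $p\in(1,\infty)$, to obtain
\[
\|\nabla_H\pi_1\|_{L^p(B(x_0';r))}\le \|\nabla_H\pi_1\|_{L^p(G)}\le C_p\|F_1\|_{L^p(G)}\le C_p|B(x_0';2r)|^{1/p}\|F\|_{L^\infty(G)}\le Cr^{2/p}\|F\|_{L^\infty(G)},
\]
which has the desired scaling with no logarithmic factor.

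For the far-field piece I would use a pointwise kernel estimate. On $G\cong\T^2$ the operator $F\mapsto\nabla_H(-\Delta_H)^{-1}\mathrm{div}_H F$ is represented by a Calder\'on--Zygmund kernel $K(x',y')$ obtained by periodizing the Euclidean kernel $c\,\nabla_{x'}\otimes\nabla_{y'}\log|x'-y'|$; separating the smooth, globally bounded tail of the periodic sum one has $|K(x',y')|\le C|x'-y'|^{-2}$ whenever $|x'-y'|$ is below a fixed fraction of the periodicity scale, which fixes the threshold $r_0$. Since $F_2$ vanishes on $B(x_0';2r)$, for any $x'\in B(x_0';r)$ the integral is taken only over $y'$ with $|x'-y'|\ge r$, so
\[
|\nabla_H\pi_2(x')|\le C\|F\|_{L^\infty(G)}\int_{\{y'\in G\,:\,|x'-y'|\ge r\}}\frac{dy'}{|x'-y'|^2}\le C(1+|\log r|)\|F\|_{L^\infty(G)}.
\]
Integrating this $L^\infty$-bound over $B(x_0';r)$ gives
\[
\|\nabla_H\pi_2\|_{L^p(B(x_0';r))}\le |B(x_0';r)|^{1/p}\|\nabla_H\pi_2\|_{L^\infty(B(x_0';r))}\le Cr^{2/p}(1+|\log r|)\|F\|_{L^\infty(G)}.
\]

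Adding the two contributions via the triangle inequality produces the claimed estimate. The principal obstacle is the pointwise kernel bound $|K(x',y')|\lesssim|x'-y'|^{-2}$ on the torus: one must verify that only the $n=0$ term of the lattice-translate sum contributes the singular behaviour near the diagonal, while all other translates combine into a smooth, uniformly bounded remainder that can be absorbed into the constant. The ansatz $F=F_1+F_2$ is precisely engineered so that the singular scaling $r^{2/p}$ is carried by the near-field part (which benefits from the $L^p$-boundedness of $Q$), while the logarithmic divergence is the unavoidable cost of estimating the far-field part in the $L^\infty$-scale.
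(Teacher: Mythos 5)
Your argument is correct, and it follows a cleaner route than the paper's. The paper localizes the \emph{solution}: after periodic extension to $G'=(-2,3)^2$ it introduces two cut-off functions $\omega$ (fixing the global scale) and $\theta_r$ (fixing the $r$-scale), writes $\omega\pi$ via the Euclidean Green's function on $\R^2$, and splits the resulting representation for $\nabla_H\pi$ into five pieces $\Pi_1,\dots,\Pi_5$; the leading two match your near- and far-field pieces, while $\Pi_3,\Pi_4,\Pi_5$ are commutator terms generated by $\nabla\omega$ and $\Delta\omega$, controlled via Poincar\'e and the $L^2$-theory. You localize the \emph{data}: split $F=F\chi_{B(x_0';2r)}+F(1-\chi_{B(x_0';2r)})$ and work directly on the torus with its Green's kernel. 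This eliminates the commutator terms entirely, so only two estimates remain. What you give up is elementariness of the kernel input: the paper needs only the explicit Euclidean bound $|\nabla_{x'}\nabla_{y'}\Psi|\le C|x'-y'|^{-2}$ plus Calder\'on--Zygmund on $\R^2$, whereas you must invoke the (standard, but slightly less hands-on) fact that the periodic kernel of $\nabla_H(-\Delta_H)^{-1}\mathrm{div}_H$ on $\T^2$ equals the Euclidean $-2$-homogeneous kernel near the diagonal modulo a smooth bounded correction — you correctly identify this as the only substantive gap to fill. One small point worth spelling out: the logarithmic bound on the far-field integral relies on the torus having finite diameter, i.e.\ $\int_{r\le|x'-y'|\le R_0}|x'-y'|^{-2}\,dy'\le C\log(R_0/r)$ with $R_0$ fixed; as written the displayed integral over $\{|x'-y'|\ge r\}$ is formally over all of $\R^2$ and would diverge, so the restriction to a fundamental domain (or the global boundedness of the kernel beyond the injectivity radius) should be made explicit.
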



\begin{proof}
By applying a periodic extension we may assume that \eqref{eq: Laplace equation} holds in a larger square $G' := (-2, 3)^2$. We choose $r_0<1/8$ to obtain $B(x_0'; 4r_0) \subset (-1/2,3/2)^2$ and utilize two cut-off functions $\omega,\theta \in C^\infty_c(\mathbb R^2)$, $\theta=\theta_r$, satisfying the following properties:
		\begin{align*}
		\begin{array}{lll}
		\omega\equiv1 \text{ on } [-1, 2]^2, & 
		\mathrm{supp}\,(\omega) \subset G', &
		\|\nabla_H^k \omega\|_{L^\infty(\mathbb R^2)} 
		\le C,
		\\
		\theta\equiv1 \text{ on } B(x_0'; 2r), & 
		\mathrm{supp}\,(\theta) \subset B(x_0'; 4r), &
		\|\nabla_H^k \theta\|_{L^\infty(\mathbb R^2)} 
		\le Cr^{-k}
		\end{array}
		\end{align*}
for $k=0,1,2$; compare the proof of Lemma~\ref{LemmaAnisotropicLaplaceResolventOmega}.
From \eqref{eq: Laplace equation} we see that $\omega \pi$ satisfies
		\begin{equation*}
		\Delta_H (\omega \pi) 
		= \mathrm{div}_H (\omega F) 
		- \nabla_H\omega\cdot F 
		+ 2\mathrm{div}_H\big( (\nabla_H\omega)\pi \big) 
		- (\Delta_H\omega)\pi 
		\quad\text{in}\quad \mathbb R^2.
		\end{equation*}
Then, letting $\Psi(x', y') := \frac1{2\pi} \log|x' - y'|$ be the Green's function for the Laplacian in $\mathbb R^2$, we obtain
		\begin{align*}
		(\omega \pi)(x') 
		= - \int_{\mathbb R^2} (\nabla_{y'} \Psi)(x', y') \cdot 
		\big[ 
		\omega F + 2(\nabla_{y'}\omega)\pi 
		\big](y') \, dy'
		- \int_{\mathbb R^2} \Psi(x', y') \big[
		(\nabla_H\omega)\cdot F + (\Delta_H\omega)\pi 
		\big](y') \, dy'.
		\end{align*}
Therefore, for $x' \in B(x_0'; r)$ we have the representation
		\begin{align*}
		\nabla_H \pi(x') 
		= & - \int_{\mathbb R^2} (\nabla_{x'}\nabla_{y'} \Psi)(x', y') \big[
		\omega F + 2(\nabla_{y'}\omega)\pi 
		\big](y') \, dy' 
		- \int_{\mathbb R^2} (\nabla_{x'}\Psi)(x', y') \big[
		(\nabla_H\omega)\cdot F + (\Delta_H\omega)\pi 
		\big](y') \, dy' 
		\\
		= & - \int_{\mathbb R^2} (\nabla_{x'}\nabla_{y'} \Psi)(x', y') \big[
		\theta F + \omega(1 - \theta) F + 2(\nabla_{y'}\omega)\pi 
		\big](y') \, dy' 
		\\
		& - \int_{\mathbb R^2} (\nabla_{x'}\Psi)(x', y') \big[
		(\nabla_H\omega)\cdot F + (\Delta_H\omega)\pi 
		\big](y') \, dy' 
		\\
		=: &\,\Pi_1(x') + \Pi_2(x') + \Pi_3(x') + \Pi_4(x') + \Pi_5(x')
		\end{align*}
where in the second step we used $\omega \theta=\theta$.		
We derive $L^p(B(x_0'; r))$-estimates for each of the above terms as follows:
By the Calder\'on--Zygmund inequality we have
		\begin{equation*}
		\|\Pi_1\|_{L^p(\mathbb R^2)} 
		\le C\|\theta F\|_{L^p(\mathbb R^2)} 
		\le C \|\theta\|_{L^p(\mathbb R^2)} \|F\|_{L^\infty(G')} 
		\le C r^{2/p} \|F\|_{L^\infty(G)}.
		\end{equation*}
		%
For the second term note that we have $|\nabla_{x'}\nabla_{y'} \Psi(x', y')| \le C|x' - y'|^{-2}$ and
		\[
		\mathrm{supp}\,(\omega(1-\theta)) 
		=\mathrm{supp}\,(\omega-\theta)
		\subset \mathrm{supp}(\omega)\setminus B(x'_0;2r)
		\]
yields $\mathrm{supp}\,(\omega(1-\theta)) \subset \{r \le |x'-y'| \le 4\}$ and therefore
		\begin{align*}
		\|\Pi_2\|_{L^p(B(x_0'; r))} 
		&\le \lVert 1\rVert_{L^p(B(x'_0;r))} 
		 \left(
		\sup_{x' \in B(x_0'; r)} \int_{r \le |x' - y'|\le 4} C|x' - y'|^{-2} \, dy' 
		\right)
		\|\omega(1 - \theta) F\|_{L^\infty(G')}
		\\
		&\le C r^{2/p} (1 + |\log r|) \|F\|_{L^\infty(G)}.
		\end{align*}
		%
The condition $\mathrm{supp}\,(\nabla_H\omega)\subset G'\setminus [-1,2]$ yields
		\begin{align*}
		\|\Pi_3\|_{L^p(B(x_0'; r))} 
		&\le \lVert 1\rVert_{L^p(B(x'_0;r))}  
		\left(
		\sup_{1/2 \le |x' - y'| \le 3} C|x' - y'|^{-2} 
		\right)
		\|2 (\nabla_H\omega) \pi\|_{L^1(G')} 
		\le C r^{2/p} \|\pi\|_{L^1(G)}.
		\end{align*}
It follows from Poincar\'e's inequality and the $L^2$-theory for 
\eqref{eq: Laplace equation} that
		\begin{equation*}
		\|\pi\|_{L^1(G')} 
		\le C\|\pi\|_{L^2(G)} 
		\le C\|\nabla_H\pi\|_{L^2(G)} 
		\le C\|F\|_{L^2(G)} 
		\le C\|F\|_{L^\infty(G)}
		\end{equation*}
and therefore $\|\Pi_3\|_{L^p(B(x_0'; r))} \le C r^{2/p} \|F\|_{L^\infty(G)}$.
		%
Similarly to $\Pi_3$, we have
		\begin{align*}
		\|\Pi_4 + \Pi_5\|_{L^p(B(x_0'; r))} 
		&\le \lVert 1\rVert_{L^p(B(x'_0;r))}  \Big( 
		\sup_{1/2 \le |x' - y'| \le 3} C|x' - y'|^{-1} 
		\Big) 
		(\|F\|_{L^1(G)} + \|\pi\|_{L^1(G)}) 
		\\
		&\le C r^{2/p} \|F\|_{L^\infty(G)}.
		\end{align*}
Combining these estimates yields the desired estimate.
\end{proof}

\begin{remark}
Note that the Calder\'on--Zygmund inequality we have used to estimate $\Pi_1$ does not hold for $p\in \{1,\infty\}$ while the arguments of Section~\ref{sec:stokesEasy} can be adapted to cover the case $p=\infty$.
\end{remark}

We now turn to prove the estimate 
$\lvert \lambda\rvert^{1/2}\lVert \partial_z (\lambda-\Aip)^{-1}\mathbb{P}\rVert_{L^\infty_H L^p_z(\Omega)}
\le C_{\theta, p, \lambda_0} \lVert f\rVert_{L^\infty_H L^p_z(\Omega)}$ 
for $\lambda\in\Sigma_{\theta}$, $|\lambda|>\lambda_0$ and for $\theta\in(0,\pi)$, $p>3$.
For this purpose we observe that the solution $v$ to the resolvent problem
		\begin{align*}
		\lambda v-Av=\PP f 
		\quad \text{on} \quad  \Omega
		\end{align*}
with boundary conditions \eqref{eq:bc} is decomposed as $v = v_1 + v_2$, where $(v_1, \pi_1)$ and $(v_1, \pi_1)$ solve
		\begin{equation} \label{eq: problem for v1}
		\lambda v_1 - \Delta v_1 + \nabla_H \pi_1 
		= f \text{ on }\Omega, 
		\quad
		\Delta_H\, \pi_1 = -h^{-1} \mathrm{div}_H(\partial_z v|_{\Gamma_b}) 
		\text{ on } G, 
		\end{equation}
and
		\begin{equation} \label{eq: problem for v2}
		\lambda v_2 - \Delta v_2 + \nabla_H \pi_2 
		= 0 \text{ on } \Omega, 
		\quad
		\Delta_H\, \pi_2 = \mathrm{div}_H\, \bar f 
		\text{ on } G, 
		\end{equation}
respectively, both equipped with the boundary conditions \eqref{eq:bc} and periodic boundary conditions for $\pi_i$ on $\partial G$, as $\pi:=\pi_1+\pi_2$ satisfies \eqref{eq:LinearPressureWeak}.
Since \eqref{eq: problem for v1} is equivalent to $v_1=(\lambda-\Delta)^{-1}(f+Bv)$ we obtain 
		\begin{align}\label{eq:voneestimate}
		|\lambda|^{1/2} \|\partial_z v_1\|_{L^\infty_H L^p_z(\Omega)} 
		\le |\lambda|^{1/2} \|\nabla v_1\|_{L^\infty_H L^p_z(\Omega)} 
		\le C_{\theta, p,\lambda_0} \|f\|_{L^\infty_H L^p_z(\Omega)}
		\end{align}
for $\lvert \lambda\rvert>\lambda_0$ by the same argument used to derive \eqref{eq1: conclusion of resolvent estimate}. 
This, $\nabla_H v_2=\nabla_H v-\nabla_H v_1$, and estimate \eqref{eq: resolvent estimate for horizontal derivative} yield
		\begin{align}\label{eq: horizontal estimate for v two}
		\lvert \lambda \rvert^{1/2} \lVert \nabla_H v_2\rVert_{L^\infty_H L^p_z(\Omega)}
		\le C_{\theta,p,\lambda_0}\lVert f\rVert_{L^\infty_H L^p_z(\Omega)}, 
		\quad \lambda \in \Sigma_\theta.
		\end{align} 
In order to prove estimate~\eqref{eq:FirstVerticalEstimate} it thus remains to establish the following.

\begin{proposition}\label{vtwoestimate}
Let $p \in (3,\infty)$ and $\theta\in (0,\pi)$. Then there exists constants $\lambda_0>0$ and $C_{\theta,p,\lambda_0}>0$ such that for all $\lambda \in \Sigma_\theta$ with $\lvert \lambda\rvert>\lambda_0$ and $f \in \Xip$ the solution $v_2$ of \eqref{eq: problem for v2} satisfies
		\begin{equation*}
		|\lambda|^{1/2} \|\partial_z v_2\|_{L^\infty_H L^p_z(\Omega)} 
		\le C_{\theta,p,\lambda_0} \|f\|_{L^\infty_H L^p_z(\Omega)}.
		\end{equation*}
\end{proposition}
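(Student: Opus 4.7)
The plan is to exploit the product structure of the cylindrical domain $\Omega = G \times (-h,0)$ together with the fact that the forcing $g := -\nabla_H \pi_2$ in \eqref{eq: problem for v2} depends only on the horizontal variables. Since the Laplacian with the boundary conditions \eqref{eq:bc} factorizes as $e^{t\Delta} = e^{t\Delta_H}\otimes e^{t\Delta_z}$, where $\Delta_H$ is the two-dimensional periodic Laplacian and $\Delta_z$ is the vertical operator introduced before Lemma~\ref{LemmaVerticalSemigroup}, the Laplace-transform representation of the resolvent yields
\[
\partial_z v_2(x,y,z) \;=\; \int_0^\infty e^{-\lambda t}\bigl(e^{t\Delta_H} g\bigr)(x,y)\,\bigl(\partial_z e^{t\Delta_z}1\bigr)(z)\,dt,
\]
so after Minkowski's inequality the task reduces to estimating the scalar factors $\|e^{t\Delta_H} g\|_{L^\infty(G)}$ and $\|\partial_z e^{t\Delta_z}1\|_{L^p(-h,0)}$ and integrating their product against $e^{-\mathrm{Re}(\lambda)t}$.

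The vertical factor is routine: since the constant function $1$ fails to satisfy the Dirichlet condition at $z=-h$, a boundary layer of width $\sqrt{t}$ develops there, and a short eigenfunction/heat-kernel argument gives $\|\partial_z e^{t\Delta_z}1\|_{L^p(-h,0)} \le C t^{-1/2 + 1/(2p)}$ for $0 < t \le 1$, with exponential decay for $t \ge 1$ by the spectral gap of $-\Delta_z$.

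The main obstacle is the horizontal factor. Since $g = (Q-I)\bar f$ and $Q$ is unbounded on $L^\infty(G)$ (cf.\ Remark~\ref{rem:HelmholtzLinfty}), $g$ itself is not in $L^\infty$; nevertheless, the composition $e^{t\Delta_H}g$ should admit at worst a logarithmic blow-up as $t\to 0^+$. To make this precise, use the representation $(I-Q) = -\nabla_H(-\Delta_H)^{-1}\mathrm{div}_H$ together with $(-\Delta_H)^{-1} = \int_0^\infty e^{s\Delta_H}\,ds$ on the mean-zero subspace (valid because $\mathrm{div}_H \bar f$ is mean-zero on the torus) to arrive at
\[
e^{t\Delta_H} g \;=\; \int_t^\infty \nabla_H\mathrm{div}_H\,e^{u\Delta_H}\bar f\,du.
\]
The heat-semigroup estimate $\|\nabla_H^2 e^{u\Delta_H}\|_{L^\infty\to L^\infty}\le Cu^{-1}$ controls the integrand for small $u$, while for $u\ge 1$ the fact that $\nabla_H^2$ annihilates constants places us in the mean-zero subspace of $L^\infty(\mathbb{T}^2)$ where $-\Delta_H$ enjoys a spectral gap, giving exponential decay. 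Integrating yields
\[
\|e^{t\Delta_H} g\|_{L^\infty(G)} \;\le\; C(1+|\log t|)\|\bar f\|_{L^\infty(G)},\quad 0<t\le 1,
\]
with exponential decay for $t\ge 1$.

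Combining both estimates and rescaling $s = \mathrm{Re}(\lambda)t$ (taking $\lambda>0$ first) produces
\[
\|\partial_z v_2\|_{L^\infty_H L^p_z(\Omega)} \;\le\; C\,(\mathrm{Re}\,\lambda)^{-1/2-1/(2p)}(1+\log\mathrm{Re}\,\lambda)\,\|\bar f\|_{L^\infty(G)}.
\]
Since $\|\bar f\|_{L^\infty(G)}\le C\|f\|_{L^\infty_H L^p_z(\Omega)}$ by H\"older's inequality, multiplication by $|\lambda|^{1/2}$ creates an extra decay factor $|\lambda|^{-1/(2p)}$ that absorbs the logarithm as soon as $|\lambda|\ge\lambda_0$ is chosen sufficiently large, which is exactly the claimed bound. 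For complex $\lambda\in\Sigma_\theta$ with $\theta<\pi$, rotating the Laplace-transform contour to the ray of argument $-\arg(\lambda)/2$---which lies in the common sector of analyticity (angle $\pi/2$) of both $e^{\tau\Delta_H}$ and $e^{\tau\Delta_z}$---replaces $\mathrm{Re}\,\lambda$ by $c_\theta|\lambda|$ and reproduces the same estimate. The decisive step, and the only delicate one, is the logarithmic $L^\infty$-bound on $e^{t\Delta_H}g$: it captures precisely how the Calder\'on--Zygmund failure of $Q$ on $L^\infty$ is converted into a mere logarithmic singularity in $t$ once the horizontal heat semigroup has acted.
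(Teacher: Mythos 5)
Your proof is correct and takes a genuinely different, and in fact more elementary, route from the paper's. The paper proves Proposition~\ref{vtwoestimate} by a localization scheme: vertical cutoffs $\alpha,\beta$ and a horizontal cutoff $\theta$ at scale $r\sim|\lambda|^{-1/2}$, the anisotropic interpolation inequality of Lemma~\ref{lem: anisotropic interpolation inequality}, the ``nearly-optimal'' local $L^p(B(x_0';r))$-bound for $\nabla_H\pi$ with logarithmic factor of Proposition~\ref{prop: nearly-optimal estimate for laplacian}, and an absorption argument in $\eta$ (which, incidentally, forces $p>2$). You instead exploit the fact that $g=-\nabla_H\pi_2$ is $z$-independent, so the resolvent factorizes exactly as $v_2=(\lambda-\Delta)^{-1}(g\otimes 1)=\int_0^\infty e^{-\lambda t}(e^{t\Delta_H}g)\otimes(e^{t\Delta_z}1)\,dt$, with $\partial_z$ hitting only the vertical factor. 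The decisive numerology is $\|\partial_z e^{t\Delta_z}1\|_{L^p(-h,0)}\sim t^{1/(2p)-1/2}$, which carries the extra factor $t^{1/(2p)}$ that ultimately produces the $|\lambda|^{-1/(2p)}$ decay absorbing the logarithmic singularity; the logarithmic $L^\infty$-bound for $e^{t\Delta_H}(1-Q)$ via $\int_t^\infty \nabla_H\mathrm{div}_H\,e^{u\Delta_H}\bar f\,du$ plays the role of the paper's Proposition~\ref{prop: nearly-optimal estimate for laplacian}, but in a global, semigroup-adapted form. Your identity $e^{t\Delta_H}g=\int_t^\infty\nabla_H\mathrm{div}_H\,e^{u\Delta_H}\bar f\,du$ is valid because $\mathrm{div}_H\bar f$ has mean zero on the torus, the small-$u$ part gives the $|\log t|$, and the large-$u$ part decays exponentially since $\nabla_H\mathrm{div}_H$ annihilates the mean. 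The steps involving Minkowski's inequality for the $L^\infty_HL^p_z$-norm, the contour rotation to the ray of argument $-\arg(\lambda)/2$ (uniformly in $\Sigma_\theta$, $\theta<\pi$), and the change of variables $s=c_\theta|\lambda|t$ all check out. Your approach yields the estimate for all $p\in(1,\infty)$ rather than $p>3$, and is clearly shorter; what the paper's localization machinery buys is reusability, since essentially the same cutoff/interpolation/absorption template is deployed in Proposition~\ref{prop: L1Lq estimate for dual problem} for the dual problem, whose forcing is genuinely $z$-dependent and admits no tensor shortcut.
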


\begin{remark}\label{FullRangeRemark}
	The estimate 
	\[
	\lvert \lambda \rvert^{1/2}
	\lVert \partial_z (\lambda-\Aip)^{-1}\mathbb{P}f\rVert_{L^\infty_H L^p_z(\Omega)}
	\le C_{\theta,p}\lVert f\rVert_{L^\infty_H L^p_z(\Omega)},
	\quad f\in \Xip
	\]
	actually holds for the full range of $\lambda\in\Sigma_\theta$, $\theta\in (0,\pi)$, i.e. one can take $\lambda_0=0$. This is obtained by using that $\mathbb{P}f\in \lpso$ yields $v:=(\lambda-\Aip)^{-1}\mathbb{P}f\in D(\Apos)$ and therefore
	\[
	\lVert v\rVert_{L^\infty_H L^p_z(\Omega)}
	\le C_p \lVert v\rVert_{W^{2,p}(\Omega)}
	\le C_p \lVert Av\rVert_{\lpso}
	\le C_p \lVert \mathbb{P}f\rVert_{\lpso}
	\le C_p\lVert f\rVert_{\lpso}
	\le C_p \lVert f\rVert_{L^\infty_H L^p_z(\Omega)},
	\]
	so the same argument as in the proof of Lemma~\ref{LemmaAnisotropicLaplaceResolventOmega} applies. 
\end{remark}

%
%

\begin{proof}[Proof of Proposition \ref{vtwoestimate}]
We will simply write $(v,\pi)$ instead of $(v_2,\pi_2)$ for the solution of \eqref{eq: problem for v2}.
By applying a periodic extension in the horizontal variables we may assume that \eqref{eq: problem for v2} holds in a larger domain allowing us to replace $\Omega$ and $G$ by $\Omega' := G'\times(-h,0)$ and $G' := (-2,3)^2$ respectively. We decompose the boundary of $\Omega'$ into $\Gamma_u'=G'\times \{0\}$, $\Gamma_l' := \partial G'\times[-h,0]$ and $\Gamma'_b=G\times\{-h\}$.
For simplicity we continue to denote the periodic extensions of $v$, $\pi$ and $f$ in the same manner.

Let $\eta>1$ be a parameter to be fixed later, and let $\lambda_0$ be a positive number such that
		\begin{align}\label{eq: definition of r zero}
		r_0:=\eta\, \lambda_0^{-1/2} < \min\{1/8, h/4\}.
		\end{align}	
We fix arbitrary $\lambda\in\Sigma_\theta$, $|\lambda|>\lambda_0$, put $r:=\eta|\lambda|^{-1/2}<r_0$, and introduce two cut-off functions $\alpha=\alpha_r$, $\beta=\beta_r$, satisfying
		\begin{align*}
		&\alpha \in C^\infty([-h, 0]), \quad 
		\alpha\equiv0 \text{ on } [-h, -h + r ], 
		\quad \alpha\equiv1 \text{ on } [-h+2r, 0], \quad 
		|\partial_z^k\alpha(z)| \le Cr^{-k},
		\\
		&\beta \in C^\infty([-h, 0]), \quad 
		\beta\equiv1 \text{ on } [-h, -h+2r], \quad 
		\beta\equiv0 \text{ on } [-h+3r, 0], \quad 
		|\partial_z^k\beta(z)| \le Cr^{-k}
		\end{align*}
for $k=0,1,2$, compare the proof of Lemma~\ref{LemmaAnisotropicLaplaceResolventOmega}.
We then split the estimate for $\partial_zv$ into the ``upper'' and ``lower'' parts in $\Omega$ as
		\begin{equation} \label{eq: decomposition to alpha and beta}
		\|\partial_z v\|_{L^\infty_H L^p_z(\Omega)} 
		\le \|\partial_z (\alpha v)\|_{L^\infty_H L^p_z(\Omega)} 
		+ \|\partial_z (\beta v)\|_{L^\infty_H L^p_z(\Omega)}.
		\end{equation}
\textbf{Step 1.}
Let us first focus on $\partial_z(\alpha v)$.
By Lemma~\ref{lem: anisotropic interpolation inequality} with radius $\lvert \lambda\rvert^{-1/2}$ and $p=q$ we have
		\begin{align}\label{eq: alpha estimate step one}
		\begin{split}
		|\lambda|^{1/2} \|\partial_z(\alpha v)\|_{L^\infty_H L^p_z(\Omega)} 
		\le C_p |\lambda|^{1/p} \sup_{x_0' \in G} \biggl(
		|\lambda|^{1/2} 
		\|\partial_z (\alpha v)\|_{L^p(C(x_0'; |\lambda|^{-1/2}))} 
		+ \|\nabla_H\partial_z (\alpha v)\|_{L^p(C(x_0'; |\lambda|^{-1/2}))}
		\biggr),
		\end{split}
		\end{align}
where $C(x_0'; |\lambda|^{-1/2})$ denotes the cylinder $B(x_0'; |\lambda|^{-1/2})\times(-h,0)$ and we used that 
		\[
		\lVert f\rVert_{L^\infty_H L^p_z(\Omega)}=\sup_{x_0'\in G}\lVert f\rVert_{L^\infty(B(x'_0,R);L^p_z)}, \quad R>0.
		\]
In the following we fix arbitrary $x_0' \in G$ and introduce a cut-off function $\theta = \theta_r \in C^\infty_c(\mathbb R^2)$ such that
		\begin{equation*}
		\theta\equiv1 \text{ in } \overline{B(x_0'; |\lambda|^{-1/2})}, \quad 	
		\mathrm{supp}\,\theta \subset B(x_0'; r), \quad 
		\|\nabla_H^k \theta\|_{L^\infty(\mathbb R^2)} 
		\le Cr^{-k}
		\end{equation*}
for $k=0,1,2$. Then $\theta\alpha v$ solves
		\begin{align*}
		\lambda(\theta\alpha v) - \Delta(\theta\alpha v) 
		&= - \theta\alpha\nabla_H\pi 
 	 	- 2\nabla(\theta\alpha)\cdot\nabla v 
 	  	- (\Delta(\theta\alpha)) v \quad\text{on}\quad \Omega', 
		\\
		\partial_z(\theta\alpha v)|_{\Gamma_u' \cup \Gamma_b'} &= 0, \quad 
		\theta\alpha v \text{ periodic on }\Gamma_l'.
		\end{align*}
We further differentiate this equation with respect to $z$ to obtain
		\begin{align*}
		\lambda(\theta \partial_z(\alpha v) ) - \Delta(\theta \partial_z(\alpha v)) 
		= F_1+\partial_z F_2 \quad\text{on}\quad \Omega',
		\quad
		\restr{\theta\partial_z(\alpha v)}{\Gamma'_u \cup \Gamma'_b} = 0,\quad 
		\theta\partial_z(\alpha v) \text{ periodic on }\Gamma_l'.
		\end{align*}
where 
		\begin{align*}
		F_1&:=-\theta (\partial_z\alpha) (\nabla_H\pi)
	 	-(\Delta_H\theta)(\partial_z\alpha)v 
		-(\Delta_H\theta)\alpha(\partial_z v),
		\\
		F_2&:=-2(\nabla_H\theta)\alpha\cdot(\nabla_Hv) 
		-2\theta(\partial_z\alpha)(\partial_zv) 
		- \theta(\partial_z^2\alpha)v.
		\end{align*}
By \eqref{eq:AnisotropicResolventOmega} and \eqref{eq:AnisotropicResolventDerivativeOmega} for $\Omega'$ in the case $q=p$, we obtain the estimate
		\begin{align}\label{eq: alpha estimate step two}
		|\lambda|^{1/2} \|\partial_z (\theta \alpha v)\|_{L^p(\Omega')}
		+ \|\nabla\partial_z (\theta \alpha v)\|_{L^p(\Omega')}
		\le C_{\theta} \left(
		|\lambda|^{-1/2}\lVert F_1\rVert_{L^p(\Omega')} 
		+ \lVert F_2\rVert_{L^p(\Omega')}
		\right).
		\end{align}
and since $\theta \equiv 1$ on $C(x'_0;\lvert \lambda \rvert^{-1/2})\subset \Omega'$ by \eqref{eq: definition of r zero}, we further have
		\begin{align}\label{eq: alpha estimate step three}
		\begin{split}
		\|\partial_z (\alpha v)\|_{L^p(C(x_0'; |\lambda|^{-1/2}))}
		&\le \|\partial_z (\theta \alpha v)\|_{L^p(\Omega')},
		\\
		\|\nabla\partial_z (\alpha v)\|_{L^p(C(x_0'; |\lambda|^{-1/2}))}
		&\le \|\nabla\partial_z (\alpha v)\|_{L^p(C(x_0'; |\lambda|^{-1/2}))}.
		\end{split}
		\end{align}
Let us estimate each term on this right-hand side of \eqref{eq: alpha estimate step two} as follows:
Denoting $\lVert \cdot \rVert_{L^p_H}:=\lVert \cdot \rVert_{L^p(B(x'_0;r))}$ and $\lVert \cdot \rVert_{L^p_z}:=\lVert \cdot \rVert_{L^p(-h,0)}$, we first observe that the cut-off functions satisfy
		\[
		\lVert \theta\rVert_{L^p_H}
		\le C r^{2/p},
		\quad 
		\lVert \nabla_H \theta\rVert_{L^p_H}
		\le C r^{2/p-1},
		\quad 
		\lVert \Delta_H \theta\rVert_{L^p_H}
		\le C r^{2/p-2}
		\]
as well as
		\[
		\|\partial_z\alpha\|_{L^p_z}
		\le C r^{1/p-1},
		\quad 		
		\|\partial_z^2\alpha\|_{L^p_z}
		\le C r^{1/p-2}.
		\]
		%
By Proposition~\ref{prop: nearly-optimal estimate for laplacian} we then have
		\begin{equation*}
		\|\theta (\partial_z\alpha) (\nabla_H\pi)\|_{L^p(\Omega')} 
		\le \lVert \theta\rVert_\infty 
		\|\partial_z\alpha\|_{L^p_z}
		\|\nabla_H\pi\|_{L^p_H} 
		\le C_p r^{3/p-1}(1 + |\log r|) \|f\|_{L^\infty_H L^p_z(\Omega)}.
		\end{equation*}		
		%
We further have the Poincar\'e inequality 
		\begin{align}\label{eq: vertical poincare}
		\|f\|_{L^\infty(G'; L^p(-h, -h + d))} 
		\le d\|\partial_z f\|_{L^\infty_H L^p_z},
		\quad 0\le d\le h, \quad \restr{f}{\Gamma'_b} = 0
		\end{align}
and hence using H\"older's inequality yields
		\begin{equation*}
		\|(\Delta_H\theta)(\partial_z\alpha)v\|_{L^p(\Omega')} 
		\le 
		\|\Delta_H\theta\|_{L^p_H} 
		\|\partial_z\alpha\|_{\infty}
		\|v\|_{L^\infty(G'; L^p(-h,-h+2r))} 
		\le Cr^{2/p-2}\|\partial_zv\|_{L^\infty_H L^p_z(\Omega)}.
		\end{equation*}
		%
For the third term in $F_1$ we simply have
		\begin{equation*}
		\|(\Delta_H\theta)\alpha(\partial_zv)\|_{L^p(\Omega')} 
		\le \lVert \Delta_H \theta\rVert_{L^p_H}
		\lVert \alpha\rVert_\infty 
		\|\partial_zv\|_{L^\infty_H L^p_z(\Omega)}
		\le Cr^{2/p-2} \|\partial_zv\|_{L^\infty_H L^p_z(\Omega)}.
		\end{equation*}	
		%
The first term in $F_2$ is estimated via \eqref{eq: horizontal estimate for v two}, yielding
		\begin{equation*}
		\|(\nabla_H\theta) \alpha (\nabla_Hv)\|_{L^p(\Omega')} 
		\le \lVert \nabla_H \theta\rVert_{L^p_H} 
		\lVert \alpha\rVert_\infty
		\lVert \nabla_Hv\|_{L^\infty_H L^p_z(\Omega)} 
		\le C_{\theta,p,\lambda_0} r^{2/p-1} |\lambda|^{-1/2} \|f\|_{L^\infty_H L^p_z(\Omega)},
		\end{equation*}
		%
whereas for the second term in $F_2$ we simply have
		\begin{equation*}
		\|\theta (\partial_z\alpha) (\partial_zv)\|_{L^p(\Omega')} 
		\le \lVert \theta\rVert_\infty \lVert \partial_z \alpha\rVert_\infty
		\lVert \partial_z v\rVert_{L^\infty_H L^p_z(\Omega)}
		\le Cr^{2/p-1} \|\partial_zv\|_{L^\infty_H L^p_z(\Omega)},
		\end{equation*}
		%
and by the Poincar\'e inequality \eqref{eq: vertical poincare} we estimate the last term by
		\begin{equation*}
		\|\theta (\partial_z^2\alpha) v\|_{L^p(\Omega')} 
		\le \lVert \theta\rVert_{L^p_H}\lVert \partial^2_z \alpha\rVert_\infty 
		\lVert  v\rVert_{L^\infty_H L^p_z(\Omega)}
		\le Cr^{2/p-1} \|\partial_zv\|_{L^\infty_H L^p_z(\Omega)}.
		\end{equation*}
		%
Collecting the above estimates, using \eqref{eq: alpha estimate step one}, \eqref{eq: alpha estimate step two} and \eqref{eq: alpha estimate step three}, as well as $r=\eta \lvert \lambda\rvert^{-1/2}$, we obtain that
		\begin{align} \label{eq: estimate for alpha v}
		\begin{split}
		|\lambda|^{1/2} \|\partial_z(\alpha v)\|_{L^\infty_HL^p_z(\Omega)} 
		&\le C_{\theta,p,\lambda_0} \left(
		\eta^{2/p-2}+\eta^{3/p-2}\lvert \lambda \rvert^{-1/2p}
		+\eta^{2/p-1}r^{1/p}\lvert \log(r)\rvert
		\right)
		\lVert f\rVert_{L^\infty_H L^p_z(\Omega)}
		\\&+ C_{\theta,p} (\eta^{2/p-1} + \eta^{2/p-2}) |\lambda|^{1/2} 
		\|\partial_zv\|_{L^\infty_HL^p_z(\Omega)}.
		\\
		&\le C_{\theta,p,\lambda_0} \eta^{2/p-1}
		\big(1 + r^{1/p} |\log r| \big) \|f\|_{L^\infty_HL^p_z(\Omega)} 
		\\&+ C_{\theta,p} (\eta^{2/p-1} + \eta^{2/p-2}) |\lambda|^{1/2} 
		\|\partial_zv\|_{L^\infty_HL^p_z(\Omega)}.
		\end{split}
		\end{align}
\textbf{Step 2:} Now we shall estimate $\partial_z(\beta v)$.
We apply Lemma~\ref{lem: anisotropic interpolation inequality} as in the previous step to obtain
		\begin{align}\label{eq: beta estimate step one}
		|\lambda|^{1/2} \|\partial_z(\beta v)\|_{L^\infty_H L^p_z(\Omega)} 
		&\le C_p |\lambda|^{1/p} \sup_{x_0' \in G} \left(
		|\lambda|^{1/2} \|\partial_z (\beta v)\|_{L^p(C(x_0'; |\lambda|^{-1/2}))} 
		+ \|\nabla_H\partial_z (\beta v)\|_{L^p(C(x_0'; |\lambda|^{-1/2}))}
		\right).
		\end{align}
In the following we fix an arbitrary point $x_0' \in G$.
With the same cut-off function $\theta \in C^\infty_c(\mathbb R^2)$ as in Step 1, we find that $\theta\beta v$ solves
		\begin{align*}
		\lambda(\theta\beta v) - \Delta(\theta\beta v) 
		= F_3 \quad\text{in}\quad \Omega', 
		\quad
		\partial_z(\theta\beta v)|_{\Gamma_u'} = 0, \quad 
		\restr{\theta\beta v}{\Gamma_b'} = 0, \quad 
		\theta\beta v \text{ periodic on }\Gamma_l'
		\end{align*}
where
		\[
		F_3:=
		-\theta\beta(\nabla_H\pi) 
		-2(\nabla_H \theta)\beta\cdot (\nabla_H v)		
		-2\theta(\partial_z \beta)(\partial_z v) 
		-(\Delta_H \theta)\beta v
		-2\theta(\partial_z^2\beta)v.
		\]	
We apply estimate \eqref{eq:AnisotropicResolventOmega} on $\Omega'$ with $q=p$ to obtain
		\begin{align}\label{eq: beta estimate step two}
		|\lambda|^{1/2} \|\nabla (\theta \beta v)\|_{L^p(\Omega')} 
		+ \|\Delta (\theta \beta v)\|_{L^p(\Omega')} 
		\le C_\theta \lVert F_3\rVert_{L^p(\Omega')}
		\end{align}	
where we further have, compare \eqref{eq: alpha estimate step three}, that
		\begin{align}\label{eq: beta estimate step three}
		\begin{split}
		\lVert \partial_z (\beta v)\rVert_{L^p(C(x'_0;\lvert \lambda \rvert^{-1/2})}
		&\le \lVert \nabla (\theta\beta v)\rVert_{L^p(\Omega')},
		\\
		\lVert \nabla_H \partial_z (\beta v)\rVert_{L^p(C(x'_0;\lvert \lambda \rvert^{-1/2})}
		&\le \lVert \nabla_H \partial_z (\theta \beta v)\rVert_{L^p(\Omega')}
		\le \lVert \theta \beta v \rVert_{W^{2,p}(\Omega')}
		\le C_p \lVert \Delta (\theta \beta v)\rVert_{L^p(\Omega')} 
		\end{split}
		\end{align}
by the invertibility of the Laplace operator with mixed Neumann and Dirichlet boundary conditions, compare Section \ref{sec:laplace}.

We now estimate the right-hand side of \eqref{eq: beta estimate step two} as follows:
Note that $\beta$ satisfies the estimates
		\[
		\lVert \beta\rVert_{L^p_z}\le C r^{1/p},
		\quad 
		\lVert \partial_z \beta\rVert_{L^p_z}\le C r^{1/p-1},
		\quad
		\lVert \partial_z^2 \beta\rVert_{L^p_z}\le C r^{1/p-2}
		\]
since $\supp (\beta)\subset [-h,-h+3r]$. 
It follows from Proposition~\ref{prop: nearly-optimal estimate for laplacian} 				that
		\begin{align*}
		\|\theta \beta (\nabla_H\pi)\|_{L^p(\Omega')} 
		\le \lVert \theta\rVert_\infty
		\|\beta\|_{L^p_z}
		\|\nabla_H\pi\|_{L^p_H} 
		\le C_p r^{3/p} (1 + |\log r|) \|f\|_{L^\infty_HL^p_z(\Omega)}.
		\end{align*}
		%
The estimate \eqref{eq: horizontal estimate for v two} implies that
		\begin{align*}
		\|(\nabla_H\theta)\beta\cdot (\nabla_H v)\|_{L^p_H} 
		&\le \|\nabla_H\theta\|_{L^p_H} 
		\lVert \beta\rVert_\infty		
		\|\nabla_H v\|_{L^\infty_HL^p_z(\Omega)} 
		\le C_{\theta,p,\lambda_0}r^{2/p-1} |\lambda|^{-1/2} 
		\|f\|_{L^\infty_HL^p_z(\Omega)},
		\end{align*}
		%
and for the term containing vertical derivatives we have
		\begin{equation*}
		\|\theta(\partial_z\beta) (\partial_zv)\|_{L^p(\Omega')} 
		\le \|\theta\|_{L^p_H}
		\|\partial_z\beta\|_\infty 
		\|\partial_zv\|_{L^\infty_HL^p_z(\Omega)} 
		\le Cr^{2/p-1} \|\partial_zv\|_{L^\infty_HL^p_z(\Omega)}.
		\end{equation*}
		%
By the Poincar\'e inequality \eqref{eq: vertical poincare} we have
		\begin{align*}
		\|(\Delta_H\theta) \beta v\|_{L^p(\Omega')} 
		\le \|\Delta_H\theta\|_{L^p_H}
		\lVert \beta\rVert_\infty		
		\|v\|_{L^\infty(G; L^p(-h, -h + 3r))} 
		\le Cr^{2/p-1} \|\partial_zv\|_{L^\infty_HL^p_z(\Omega)}
		\end{align*}
		%
as well as
		\begin{equation*}
		\|\theta(\partial_z^2\beta) \, v\|_{L^p(\Omega')} 
		\le \lVert \theta\rVert_{L^p_H}
		\lVert \partial_z^2\beta \rVert_\infty
		\lVert v\rVert_{L^\infty(G;L^p(-h,-h+3r))}
		\le Cr^{2/p-1} \|\partial_zv\|_{L^\infty_HL^p_z(\Omega)}.
		\end{equation*}
		%
		%
Combining the above estimates with \eqref{eq: beta estimate step one}, \eqref{eq: beta estimate step two} and \eqref{eq: beta estimate step three} as well as $r=\eta \lvert \lambda \rvert^{-1/2}$ then yields
		\begin{align} \label{eq: estimate for beta v}	
		\begin{split}
		|\lambda|^{1/2} \|\partial_z(\beta v)\|_{L^\infty_H L^p_z(\Omega)} 
		&\le C_{\theta,p,\lambda_0}
		\left(
		\eta^{2/p-1}
		+\eta^{3/p} |\lambda|^{-1/2p} 
		\big( 1 + |\log(\eta|\lambda|^{-1/2})| \big) 
		\right)\|f\|_{L^\infty_H L^p_z(\Omega)} 
		\\&+ C_{\theta,p}\eta^{2/p-1} |\lambda|^{1/2} \|\partial_zv\|_{L^\infty_H L^p_z(\Omega)}.
		\end{split}
		\end{align}
We now substitute \eqref{eq: estimate for alpha v} and \eqref{eq: estimate for beta v} into \eqref{eq: decomposition to alpha and beta}. Since all constants $C>0$ do not depend on the parameter $\eta>0$, we can take it to be sufficiently large and so similarly to the proof of Lemma ~\ref{LemmaAnisotropicLaplaceResolventOmega} we obtain
		\begin{equation*}
		|\lambda|^{1/2} \|\partial_z v\|_{L^\infty_HL^p_z(\Omega)} 
		\le C_{\theta,p,\lambda_0}
		\left(
		\eta^{2/p-1}(1+r^{1/p}\lvert \log(r)\rvert)
		+\eta^{3/p}\lvert \lambda \rvert^{-1/2p}(1+\lvert \log(\lvert \lambda \rvert)\rvert)
		\right)
		\|f\|_{L^\infty_HL^p_z(\Omega)} .
		\end{equation*}
Since 
		\[
		\sup_{0<r<r_0}r^{1/p}\lvert \log(r)\rvert <\infty,
		\quad 
		\sup_{\lvert \lambda\rvert>\lambda_0}
		\lvert \lambda\rvert^{-1/2p}(1+\lvert \log(\lvert \lambda\rvert)\rvert)		
		<\infty,
		\]
for any $r_0,\lambda_0>0$ and $p\in (1,\infty)$, this implies the desired estimate 
$|\lambda|^{1/2} \|\partial_z v\|_{L^\infty_HL^p_z(\Omega)} 
\le C \|f\|_{L^\infty_HL^p_z(\Omega)}$ for $|\lambda| \ge \lambda_0$.
\end{proof}

We now turn to the problem
		\begin{align}\label{eq:HydrostaticStokesResolventDerivative}
		\lambda v-Av
		=\PP\partial_z f 
		\text{ on } \Omega
		\end{align}
with boundary conditions \eqref{eq:bc} for $f\in \Xip$. Since 
		\begin{align}\label{eq:FixedByProjection}
		\PP\partial_z f=\partial_z f-(1-Q)\overline{\partial_z f}=\partial_z f, 
		\end{align}
whenever $f=0$ on $\Gamma_u\cup \Gamma_b$ and $C^\infty_{\text{per}}([0,1]^2;C^\infty_c (-h,0))^2$ is dense in $\Xip$ we may assume without loss of generality that \eqref{eq:FixedByProjection} holds.
Moreover, in view of periodic extension we may assume that \eqref{eq:HydrostaticStokesResolventDerivative} holds in a larger domain $\Omega' := G'\times(-h,0)$, $G' := (-2,3)^2$. Since the problem is well-posed in $\lpso$ by \eqref{eq:LpEstimateDerivatives}, estimate~\eqref{eq:SecondVerticalEstimate} then follows from the following:

\begin{proposition} \label{thm: 2nd vertical derivative estimate}
Let $p\in(2, \infty)$ and $\theta\in(0, \pi)$.
Then there exists constants $\lambda_0>0$ and $C_{\theta,p,\lambda_0}>0$ such that for all $\lambda\in\Sigma_\theta$ with $\lvert \lambda \rvert>\lambda_0$ and $f \in \Xip$ the solution to the problem \eqref{eq:HydrostaticStokesResolventDerivative} satisfies
		\begin{equation*}
		\lvert \lambda\rvert^{1/2}\lVert  v\rVert_{L^\infty_HL^p_z(\Omega)}
		\le C_{\theta, p,\lambda_0} \lVert f\rVert_{L^\infty_HL^p_z(\Omega)}.
		\end{equation*}
\end{proposition}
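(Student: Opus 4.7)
The approach is to mimic the proof of Proposition~\ref{vtwoestimate} step by step, adapted so as to estimate $|\lambda|^{1/2}\|v\|_{L^\infty_H L^p_z}$ directly rather than $|\lambda|^{1/2}\|\partial_z v\|_{L^\infty_H L^p_z}$. By the density assumption and periodic extension already established in the excerpt, we have $\PP\partial_z f=\partial_z f$ and $\nabla_H\pi=-Bv=-h^{-1}(1-Q)\partial_z v|_{\Gamma_b}$; we may work on $\Omega'$ with smooth $f$ vanishing in a neighbourhood of $\Gamma_u\cup\Gamma_b$. Fix $\eta>1$ large and set $r=\eta|\lambda|^{-1/2}$, choosing $\lambda_0$ so that $r<\min\{1/8,h/4\}$ for all $|\lambda|>\lambda_0$. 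Introduce the vertical cut-offs $\alpha_r,\beta_r$ with $\alpha+\beta\equiv1$ on $[-h,0]$ exactly as in the proof of Proposition~\ref{vtwoestimate}, and decompose $\|v\|_{L^\infty_H L^p_z}\le\|\alpha v\|_{L^\infty_H L^p_z}+\|\beta v\|_{L^\infty_H L^p_z}$.

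For each piece I would apply the anisotropic interpolation inequality (Lemma~\ref{lem: anisotropic interpolation inequality}) with radius $|\lambda|^{-1/2}$, reducing the bound to local $L^p$ estimates on cylinders $C(x_0';|\lambda|^{-1/2})$, and then introduce a horizontal cut-off $\theta_r$. The function $u:=\theta\alpha v$ satisfies on $\Omega'$, with homogeneous boundary conditions,
\[
\lambda u-\Delta u=\partial_z G_1+G_2,
\]
where $G_1:=\theta\alpha f$ (arising from $\theta\alpha\partial_z f=\partial_z(\theta\alpha f)-\theta(\partial_z\alpha)f$) and $G_2:=-\theta(\partial_z\alpha)f-\theta\alpha\nabla_H\pi-2\nabla(\theta\alpha)\cdot\nabla v-\Delta(\theta\alpha)v$. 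Applying \eqref{eq:AnisotropicResolventOmega} and \eqref{eq:AnisotropicResolventDerivativeOmega} with $q=p$ yields
\[
|\lambda|^{1/2}\|u\|_{L^p(\Omega')}+\|\nabla u\|_{L^p(\Omega')}\le C\bigl(\|G_1\|_{L^p(\Omega')}+|\lambda|^{-1/2}\|G_2\|_{L^p(\Omega')}\bigr).
\]
The terms $\|\theta\alpha f\|_{L^p}$, $\|\theta(\partial_z\alpha)f\|_{L^p}$, $\|\nabla(\theta\alpha)\cdot\nabla v\|_{L^p}$ and $\|\Delta(\theta\alpha)v\|_{L^p}$ are then estimated as in the proof of Proposition~\ref{vtwoestimate}, using the cut-off scalings, the Poincar\'e inequality \eqref{eq: vertical poincare} and the horizontal gradient bound \eqref{eq: horizontal estimate for v two}. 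The pressure contribution is controlled through Proposition~\ref{prop: nearly-optimal estimate for laplacian} applied with $F=-h^{-1}\partial_z v|_{\Gamma_b}$, giving
\[
\|\theta\alpha\nabla_H\pi\|_{L^p(\Omega')}\le Ch^{1/p}r^{2/p}(1+|\log r|)\,\|\partial_z v|_{\Gamma_b}\|_{L^\infty(G)}.
\]
The estimate for $\|\beta v\|_{L^\infty_H L^p_z}$ is completely symmetric, using $v|_{\Gamma_b}=0$ and the Poincar\'e inequality in the spirit of \eqref{eq: estimate for beta v}.

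The main obstacle, and the place where the proof genuinely departs from that of Proposition~\ref{vtwoestimate}, is the control of the trace $\|\partial_z v|_{\Gamma_b}\|_{L^\infty(G)}$: in Proposition~\ref{vtwoestimate} the analogous pressure involves $\bar f$ which is directly part of the data, whereas here it is determined by $v$ itself. One exploits that $f$ vanishes in a neighbourhood of $\Gamma_b$, so the equation is homogeneous there; combining interior elliptic regularity on a slab of vertical thickness $\sim|\lambda|^{-1/2}$ with the $L^p$-theory \eqref{eq:LpEstimateDerivatives} for the hydrostatic Stokes operator (which provides $|\lambda|^{1/2}\|v\|_{L^p(\Omega)}\le C\|f\|_{L^p(\Omega)}\le Ch^{1/p}\|f\|_{L^\infty_HL^p_z}$) yields
\[
\|\partial_z v|_{\Gamma_b}\|_{L^\infty(G)}\le C|\lambda|^{1/2}\|f\|_{L^\infty_H L^p_z}.
\]
Plugging this into the pressure bound and tracking the scaling $r=\eta|\lambda|^{-1/2}$, the pressure contribution to $|\lambda|^{1/2}\|v\|_{L^\infty_HL^p_z}$ takes the form $C\eta^{2/p}|\lambda|^{-1/(2p)}(1+|\log|\lambda||)\|f\|_{L^\infty_HL^p_z}$, which is uniformly bounded for $|\lambda|>\lambda_0$ thanks to $\sup_{0<r<r_0}r^{1/p}|\log r|<\infty$. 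Choosing $\eta$ sufficiently large to absorb the remaining terms $\|v\|_{L^\infty_HL^p_z}$ and $\|\nabla v\|_{L^\infty_HL^p_z}$ on the left-hand side then yields the desired estimate for $|\lambda|>\lambda_0$.
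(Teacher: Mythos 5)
Your proposal attempts a direct localization argument parallel to Proposition~\ref{vtwoestimate}, but this fails at the pressure term, and the failure is precisely what motivates the paper to switch to a duality argument for this estimate.

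Track the scaling carefully. In your decomposition, the pressure enters $G_2$ through $\theta\alpha\nabla_H\pi$. Since $\nabla_H\pi$ is $z$-independent and $\|\alpha\|_{L^p_z}\sim h^{1/p}$ (the support of $\alpha$ has length $\sim h$, not $\sim r$), Proposition~\ref{prop: nearly-optimal estimate for laplacian} gives
\[
\|\theta\alpha\nabla_H\pi\|_{L^p(\Omega')}\le C\,r^{2/p}(1+|\log r|)\,\|\partial_z v|_{\Gamma_b}\|_{L^\infty(G)}.
\]
Your claimed trace bound $\|\partial_zv|_{\Gamma_b}\|_{L^\infty(G)}\le C|\lambda|^{1/2}\|f\|_{L^\infty_HL^p_z}$ (which already needs $p>3$ rather than $p>2$ as stated, and is never established for this right-hand side in the paper) makes this term $\le Cr^{2/p}(1+|\log r|)\,|\lambda|^{1/2}\|f\|$. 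Passing through the resolvent estimate (factor $|\lambda|^{-1/2}$ for the $G_2$ part) and the interpolation Lemma~\ref{lem: anisotropic interpolation inequality} (factor $|\lambda|^{1/p}$ with radius $|\lambda|^{-1/2}$), the pressure contribution to $|\lambda|^{1/2}\|v\|_{L^\infty_HL^p_z}$ is
\[
|\lambda|^{1/p}\cdot|\lambda|^{-1/2}\cdot r^{2/p}(1+|\log r|)\,|\lambda|^{1/2}\|f\|
= \eta^{2/p}\bigl(1+|\log(\eta|\lambda|^{-1/2})|\bigr)\|f\|,
\]
which grows logarithmically as $|\lambda|\to\infty$. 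The extra factor $|\lambda|^{-1/(2p)}$ you claim is not there: in the paper's proof of Proposition~\ref{vtwoestimate}, the cut-off $\partial_z\alpha$ is supported on an interval of length $r$ and thus contributes $\|\partial_z\alpha\|_{L^p_z}\sim r^{1/p-1}$, yielding a pressure term scaling like $r^{3/p-1}$ and ultimately the convergent $|\lambda|^{-1/(2p)}(1+|\log r|)$. Working directly with $\alpha v$ instead of $\partial_z(\alpha v)$ loses this gain, and the estimate does not close.

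There is a second, related gap: your argument also requires $\|\nabla_H v\|_{L^\infty_HL^p_z}\lesssim|\lambda|^{-1/2}\|f\|$ for the solution of $\lambda v - Av=\PP\partial_z f$ (to control $\nabla(\theta\alpha)\cdot\nabla v$). The bound \eqref{eq: horizontal estimate for v two} you cite is for $v_2$, the solution of \eqref{eq: problem for v2} with a different right-hand side; the analogous estimate for the present problem is part of what is being proved, not a prior fact.

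The paper sidesteps both difficulties with a duality argument, inspired by $L^\infty$-error analysis of finite element methods: one tests $v$ against a mollified power nonlinearity $\delta_{\varepsilon,x_0'}|v|^{p-2}v^*$, converts the bilinear pairing via the adjoint resolvent problem, and shifts the vertical derivative onto the dual solution $w$. The dual problem has a nice right-hand side (Lemma~\ref{lem: Ls estimate of F} gives good $L^s$ control), so its pressure is estimated by the standard $L^s$ theory rather than by the marginal $L^\infty$-type Proposition~\ref{prop: nearly-optimal estimate for laplacian}. That is the essential structural reason why Proposition~\ref{thm: 2nd vertical derivative estimate} cannot simply be run in parallel to Proposition~\ref{vtwoestimate}.
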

To prove this estimate, 
we adopt a duality argument combined with
the use of a regularized delta function, which is based on the methodology known in
$L^\infty$-type error analysis of the finite element method, cf. \cite{RaSc82}.

In order to prove this estimate we first introduce some notation. Using periodicity, one sees that for any $\varepsilon\in (0,1)$ we have $B(x_0',\varepsilon)\subset G'$ for $x_0'\in G$ and 
		\begin{equation*}
		\|v\|_{L^\infty_HL^p_z(\Omega)}^p 
		= \sup_{x_0' \in G} \sup_{x' \in B(x'_0; \varepsilon)} 
		\int_{-h}^0 |v(x', z)|^p \, dz,
		\end{equation*}
where by $B(x'_0;\varepsilon)$ we continue to denote a disk in $\R^2$, compare Lemma~\ref{lem: anisotropic interpolation inequality}.
In the following we fix arbitrary $x_0' \in G$, $x' \in B(x_0'; \varepsilon)$ and choose $\varepsilon = |\lambda|^{-\frac{p}{2(p-2)}}$ for $\lambda$ as above.

Letting $\delta\ge 0$ be a smooth nonnegative function in the variables $(x,y)=:x'$ such that $\mathrm{supp}\,\delta \subset B(0; 1)$ and $\int_{\mathbb R^2} \delta \, dx' = 1$, we introduce a rescaled function as
		\begin{align}\label{eq:DeltaRescaling}
		\delta_\varepsilon(x') 
		:= \frac1{\varepsilon^2} \delta \left( \frac{x'}\varepsilon \right ), 
		\qquad 
		\delta_{\varepsilon, x_0'}(x') 
		:= \delta_\varepsilon(x' - x_0').
		\end{align}
We then obtain 
		\begin{equation} \label{eq1: proof of 2nd vertical estimate}
		\int_{-h}^0 |v(x', z)|^p \, dz 
		= \int_{-h}^0\int_{G'}
		\big( |v(x', z)|^p - |v(y', z)|^p \big) \delta_{\varepsilon,x'_0}(y') 
		\, dy'dz
		+ (v, \delta_{\varepsilon, x_0'}|v|^{p-2}v^*)_{\Omega'} 
		=: I_1(x') + I_2,
		\end{equation}
where $v^*$ means the complex conjugate of $v$ and $(\cdot, \cdot)_{\Omega'}$ denotes the inner product on $L^2(\Omega')^2$. In the following we estimate the two terms on the right-hand side separately, beginning with $I_1$.

\begin{lemma} \label{lem1: proof of 2nd vertical estimate}
Under the assumptions of Proposition~\ref{thm: 2nd vertical derivative estimate} we have for all 
for all $x'_0\in G$ and $x'\in B(x'_0;\varepsilon)$, $\varepsilon = |\lambda|^{-\frac{p}{2(p-2)}}$, that
		\[
		|I_1(x')|=\left\vert \int_{\Omega'} 
		\big( |v(x', z)|^p - |v(y', z)|^p \big) \delta_{\varepsilon,x'_0}(y') 
		\, dy'dz \right\vert 
		\le C_{\theta,p}|\lambda|^{-1/2} \|f\|_{L^\infty_H L^p_z(\Omega)} \|v\|_{L^\infty_H L^p_z(\Omega)}^{p-1}.
		\]
\end{lemma}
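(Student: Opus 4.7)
The plan is to compare $\lvert v(x',z)\rvert^p$ with $\lvert v(y',z)\rvert^p$ via the fundamental theorem of calculus in the horizontal variables, exploit the smallness $\lvert x'-y'\rvert\le 2\varepsilon$ on the support of $\delta_{\varepsilon,x_0'}$, and reduce matters to a horizontal gradient bound on $v$ that can be extracted from the $L^p$-resolvent theory of $\Aipos$ combined with the anisotropic interpolation inequality of Lemma~\ref{lem: anisotropic interpolation inequality}.

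First, by density I may assume $f\in C^\infty_{\mathrm{per}}([0,1]^2;C^\infty_c(-h,0))^2$, so that $v$ is smooth and $\PP\partial_z f=\partial_z f$. Setting $v_s(z):=v((1-s)y'+sx',z)$, the fundamental theorem of calculus gives
\begin{equation*}
\lvert v(x',z)\rvert^p-\lvert v(y',z)\rvert^p
=p\int_0^1\lvert v_s\rvert^{p-2}\Re\bigl(\overline{v_s}\cdot\nabla_H v_s\cdot(x'-y')\bigr)\,ds,
\end{equation*}
and hence $\bigl\lvert\lvert v(x',z)\rvert^p-\lvert v(y',z)\rvert^p\bigr\rvert\le p\lvert x'-y'\rvert\int_0^1\lvert v_s\rvert^{p-1}\lvert\nabla_H v_s\rvert\,ds$. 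Since $\lvert x'-y'\rvert\le 2\varepsilon$ on $\mathrm{supp}\,\delta_{\varepsilon,x_0'}$, the segment $(1-s)y'+sx'$ stays in $B(x_0';\varepsilon)$, and $\int\delta_{\varepsilon,x_0'}\,dy'=1$, H{\"o}lder's inequality in $z$ with exponents $p/(p-1)$ and $p$ yields
\begin{equation*}
\lvert I_1(x')\rvert
\le 2p\varepsilon\,\lVert v\rVert_{L^\infty_HL^p_z(\Omega)}^{p-1}\,\lVert\nabla_H v\rVert_{L^\infty(B(x_0';\varepsilon);L^p_z)}.
\end{equation*}

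Next I would apply Lemma~\ref{lem: anisotropic interpolation inequality} with $r=\varepsilon$ to the horizontal gradient,
\begin{equation*}
\lVert\nabla_H v\rVert_{L^\infty(B(x_0';\varepsilon);L^p_z)}
\le C\varepsilon^{-2/p}\bigl(\lVert\nabla_H v\rVert_{L^p(\Omega)}+\varepsilon\,\lVert\nabla\nabla_H v\rVert_{L^p(\Omega)}\bigr),
\end{equation*}
and deduce the required $L^p$-norms using the fact that the horizontal derivatives $\partial_x,\partial_y$ commute with both $\Aipos$ and $\PP$ and preserve the vanishing of $f$ on $\Gamma_u\cup\Gamma_b$; rewriting $\nabla_H v=(\lambda-\Aipos)^{-1}\partial_z\nabla_H\PP f$ and invoking the bounded $\mathcal H^\infty$-calculus of $\Apos$ on $\lpso$ together with \eqref{eq:LpEstimateDerivatives} supplies the mixed-derivative bound $\lVert\partial_i(\lambda-\Aipos)^{-1}\partial_j g\rVert_{L^p(\Omega)}\le C\lVert g\rVert_{L^p(\Omega)}$, which yields $\lVert\nabla_H v\rVert_{L^p(\Omega)}\le C\lVert f\rVert_{L^p(\Omega)}$ and, by bootstrapping once more, $\lVert\nabla\nabla_H v\rVert_{L^p(\Omega)}\le C\lvert\lambda\rvert^{1/2}\lVert f\rVert_{L^p(\Omega)}$. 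With the choice $\varepsilon=\lvert\lambda\rvert^{-p/(2(p-2))}$ one has $\varepsilon^{1-2/p}=\lvert\lambda\rvert^{-1/2}$ and $\varepsilon\lvert\lambda\rvert^{1/2}=\lvert\lambda\rvert^{-1/(p-2)}$, which is bounded for $\lvert\lambda\rvert\ge\lambda_0$; combined with $\lVert f\rVert_{L^p(\Omega)}\le\lvert G\rvert^{1/p}\lVert f\rVert_{L^\infty_HL^p_z(\Omega)}$ this produces the claim.

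The step I expect to be the main obstacle is controlling $\lVert\nabla\nabla_H v\rVert_{L^p(\Omega)}$ uniformly in $\lambda$ (up to a single $\lvert\lambda\rvert^{1/2}$), since the naive elliptic estimate $\lVert\nabla^2 v\rVert_{L^p}\lesssim\lVert\Aipos v\rVert_{L^p}+\lVert v\rVert_{L^p}$ leads to $\lVert\Aipos v\rVert_{L^p}=\lVert\lambda v-\PP\partial_z f\rVert_{L^p}$ and the quantity $\lVert\partial_z f\rVert_{L^p}$ is \emph{not} controlled by $\lVert f\rVert_{L^\infty_HL^p_z(\Omega)}$. The way around this is precisely to commute horizontal derivatives past $\Aipos$ and $\PP$ -- a manoeuvre that crucially fails for $\partial_z$ -- so that the offending $\partial_z$ stays inside the resolvent and is absorbed by the mixed-derivative $L^p$-estimates established in Section~\ref{sec:stokesEasy}.
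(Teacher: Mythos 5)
Your opening reduction — the fundamental theorem of calculus in the horizontal variable followed by H\"older in $z$ — is a sound substitute for the paper's elementary inequality $|a^p-b^p|\le p2^{p-2}(a^{p-1}+b^{p-1})|a-b|$, and it correctly lands you at $|I_1(x')|\le 2p\varepsilon\,\|v\|_{L^\infty_HL^p_z}^{p-1}\,\|\nabla_Hv\|_{L^\infty(B(x_0';\varepsilon);L^p_z)}$. Up to this point the two arguments are essentially interchangeable.

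The second stage, however, has a genuine gap. Passing $\|\nabla_Hv\|_{L^\infty(B(x_0';\varepsilon);L^p_z)}$ through Lemma~\ref{lem: anisotropic interpolation inequality} forces you to control $\|\nabla_H\nabla_Hv\|_{L^p}$, and the estimate you assert, $\|\nabla\nabla_Hv\|_{L^p(\Omega)}\le C|\lambda|^{1/2}\|f\|_{L^p(\Omega)}$, is false. Since $v=(\lambda-\Aipos)^{-1}\partial_zf$, this would be an operator bound for $\nabla_H^2(\lambda-\Aipos)^{-1}\partial_z$ — three derivatives wrapped around a single resolvent. Already in the model case of the Laplacian on $\R^3$ the symbol is $\xi_H^2\xi_3/(\lambda+|\xi|^2)$, which grows like $|\xi_3|$ at high frequency and is therefore not an $L^p$-multiplier for any fixed $\lambda$; commuting $\nabla_H$ past $\Aipos$ and $\PP$ merely repositions the derivatives and does not reduce the total count, and there is no ``bootstrap'' from $\|\nabla_Hv\|_{L^p}\le C\|f\|_{L^p}$ to second derivatives without a second resolvent factor or additional regularity on $f$. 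The paper avoids this precisely by never demanding a pointwise bound on $\nabla_Hv$: it replaces your Lipschitz estimate $\|v(x')-v(y')\|_{L^p_z}\le|x'-y'|\,\|\nabla_Hv\|_{L^\infty}$ with the H\"older estimate $\|v(x')-v(y')\|_{L^p_z}\le\varepsilon^\alpha\|v\|_{C^\alpha_HL^p_z}$, $\alpha=1-2/p$, and then invokes the Sobolev embedding $W^{1,p}(G)\hookrightarrow C^\alpha(\overline G)$ together with Poincar\'e's inequality to bound $\|v\|_{C^\alpha_HL^p_z}\le C\|\nabla v\|_{L^p(\Omega)}\le C\|f\|_{L^p(\Omega)}$, using only \emph{one} derivative of $v$. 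The apparent loss of a power of $\varepsilon$ (from $\varepsilon$ to $\varepsilon^{1-2/p}$) is exactly what the choice $\varepsilon=|\lambda|^{-p/(2(p-2))}$ is tuned to absorb, since $\varepsilon^{1-2/p}=|\lambda|^{-1/2}$, while the derivative count stays within the reach of the mixed-derivative estimate $\|\partial_i(\lambda-\Apos)^{-1}\partial_jg\|_{L^p}\le C\|g\|_{L^p}$. Replace your interpolation step by this H\"older/Sobolev argument.
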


\begin{proof}
Since $\int_{\R^2}\delta_{\varepsilon,x'_0}(y')\,dy'=1$ and 
$\mathrm{supp}\, \delta_{\varepsilon,x_0'} \subset B(x_0'; \varepsilon)$ we obtain
		\begin{align*}
		|I_1(x')| 
		&\le \sup_{y' \in B(x_0'; \varepsilon)} \int_{-h}^0 
		\big| |v(x', z)|^p - |v(y', z)|^p \big| \, dz 
		\\
		&\le C \sup_{y' \in B(x_0'; \varepsilon)} \int_{-h}^0 
		(|v(x', z)|^{p-1} + |v(y', z)|^{p-1}) \big| v(x', z) - v(y', z) \big| 
		\, dz,
		\end{align*}
where we have used the elementary inequality
		\begin{equation*}
		|a^p - b^p| 
		\le p \max\{a, b\}^{p-1} |a - b| 
		\le p (a+b)^{p-1} |a - b| 
		\le p 2^{p-2} (a^{p-1}+b^{p-1}) |a - b|
		\end{equation*}
for all $a,b\ge 0$, where we used that $p\in [2,\infty)$ implies that $x\mapsto x^{p-1}$ is a convex function.
H\"older's inequality then implies that
		\begin{equation*}
		\int_{-h}^0 
		(|v(x', z)|^{p-1} + |v(y', z)|^{p-1}) \big| v(x', z) - v(y', z) \big| 
		\, dz 
		\le (\|v(x')\|_{L^p_z}^{p-1} + \|v(y')\|_{L^p_z}^{p-1}) 
		\|v(x') - v(y')\|_{L^p_z}.
		\end{equation*}
		%
		%
Hence we have
		\begin{align*}
		\sup_{x'\in B(x'_0;\varepsilon)}|I_1(x')| 
		&\le C \sup_{y'\in B(x_0'; \varepsilon)} \|v(y')\|_{L^p_z}^{p-1} 
		\sup_{y'\in B(x_0'; \varepsilon)} \|v(x') - v(y')\|_{L^p_z} 
		\le C \|v\|_{L^\infty_HL^p_z}^{p-1}	
		\varepsilon^\alpha 
		\|v\|_{C^\alpha_HL^p_z(\Omega)},
		\end{align*}
where $\alpha := 1-2/p>0$ and 
$\lVert v \rVert_{C^\alpha_HL^p_z(\Omega)}$ denotes the space of $L^p(-h,0)$-valued H\"older continuous functions of exponent $\alpha$ on $\overline{G}$.

The assumption $\varepsilon = |\lambda|^{-\frac{p}{2(p-2)}}$ then yields
$\varepsilon^\alpha = |\lambda|^{-1/2}$.
We now use the Sobolev embedding $W^{1,p}(G) \hookrightarrow C^\alpha(\overline{G})$ to obtain the estimate $\|v\|_{C^\alpha_HL^p_z} \le C\|v\|_{W^{1,p}(\Omega)}$.
In addition, the Poincar\'e inequality yields 
		\[
		\lVert v \rVert_{W^{1,p}(\Omega)}
		\le C_p \lVert \nabla v\rVert_{L^p(\Omega)}
		= C_p\lVert \nabla (\lambda-\Apos)^{-1}\partial_z f\rVert_{L^p(\Omega)}
		\le C_{\theta,p} \lVert f\rVert_{L^p(\Omega)}
		\le C_{\theta,p} \lVert f\rVert_{L^\infty_H L^p_z(\Omega)},
		\]
where we used that $\nabla (-\Apos)^{-1/2}$, $\Apos(\lambda-\Apos)^{-1}$ and $(-\Apos)^{-1/2}\partial_z$ are (uniformly) bounded on $\lpso$ for $\lambda\in\Sigma_\theta$ 
by \cite{GGHHK17}. 
%
Combining these results then gives the desired estimate.
\end{proof}
In order to estimate $I_2$ we perform a duality argument. 
For this purpose we introduce an auxiliary problem corresponding to \eqref{eq:HydrostaticStokesResolventDerivative} as follows:
		\begin{equation} \label{eq: dual problem}
		\begin{aligned}
		\lambda^* w - \Delta w + \nabla_H \Pi 
		&= \delta_{\varepsilon,x_0'} |v|^{p-2}v^* \quad\text{in}\quad \Omega', 
		\\
		\partial_z \Pi &= 0 \quad\text{in}\quad \Omega', 
		\\
		\mathrm{div}_H\, \bar w &= 0 \quad\text{in}\quad G', 
		\\
		\partial_zw|_{\Gamma_u'} = 0, \quad w|_{\Gamma_b'} &= 0, \quad 
		w, \Pi \text{ periodic on } \Gamma_l',
		\end{aligned}
		\end{equation}
where the upper script $*$ means complex conjugate as before.
We establish an $L^1_HL^q_z$-estimate to this problem, where $q := p/(p-1)$ is the dual index of $p$.
\begin{proposition} \label{prop: L1Lq estimate for dual problem}
Let $p\in(2,\infty)$, $1/p+1/q=1$ and $\theta\in(0,\pi)$.
Then there exists a sufficiently large 
$\lambda_0>0$ and a constant $C_{p, \lambda_0, \theta}>0$ such that the solution of \eqref{eq: dual problem} satisfies
		\begin{equation*}
		|\lambda|^{1/2}\|\partial_zw\|_{L^1_HL^q_z(\Omega')} 
		\le C_{\theta,p} \left(
		1+|\lambda|^{-1/2q}\varepsilon^{2/s-2}
		\right)
		\|v\|_{L^\infty_H L^p_z(\Omega)}^{p-1},
		\end{equation*}
for all $\varepsilon\in(0,1)$, $s \in (1, q]$, $x_0' \in G$, $\lambda\in\Sigma_\theta$, $|\lambda|>\lambda_0$, and $v \in \Xip$.
\end{proposition}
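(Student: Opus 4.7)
My plan is to apply anisotropic resolvent theory for the hydrostatic Stokes operator to the dual problem \eqref{eq: dual problem}, exploiting the horizontal concentration of $\delta_{\varepsilon, x_0'}$ at scale $\varepsilon$, together with a decomposition of the solution $w$ that isolates the singular behaviour from the smooth contribution and takes advantage of the tensor structure $\Omega = G \times (-h,0)$.

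First, I would compute the norm of $g := \delta_{\varepsilon, x_0'}|v|^{p-2}v^*$ in the anisotropic space $L^s_H L^q_z$ for $s \in [1,q]$. Using the identity $(p-1)q = p$, H\"older's inequality in $z$, and the scaling relation $\|\delta_{\varepsilon, x_0'}\|_{L^s(\mathbb R^2)} = \varepsilon^{2/s-2}\|\delta\|_{L^s}$ obtained from \eqref{eq:DeltaRescaling}, one finds
\begin{equation*}
\|g\|_{L^s_H L^q_z(\Omega')} \le C_s\,\varepsilon^{2/s-2}\|v\|_{L^\infty_H L^p_z(\Omega)}^{p-1}.
\end{equation*}
In particular, the case $s=1$ yields an $\varepsilon$-independent $L^1_H L^q_z$-bound, whereas for $s>1$ the bound degenerates as $\varepsilon \to 0$.

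Second, I would decompose $w = w_1 + w_2$ as prescribed by the hydrostatic splitting $\mathbb P g = g - (1-Q)\overline{g}$: the piece $w_1$ solves a Laplacian problem with right-hand side $g + Bw_1$ (with $B$ the trace operator from Claim~\ref{SemigroupEstimates}), while $w_2$ solves $(\lambda^* - \Delta)w_2 = -(1-Q)\overline{g}$ with the same boundary conditions. For $w_1$, the anisotropic $L^s_H L^q_z$-estimate of Lemma~\ref{LemmaAnisotropicLaplaceResolventOmega} combined with the $B$-perturbation argument from the proof of Claim~\ref{SemigroupEstimates} (valid for $|\lambda| \ge \lambda_0$ large) gives $|\lambda|^{1/2}\|\partial_z w_1\|_{L^s_H L^q_z} \le C\varepsilon^{2/s-2}\|v\|^{p-1}$, and an interpolation in the vertical variable between $L^s_H L^q_z$ and $L^1_H L^q_z$ (using the 1D Sobolev embedding and the boundary condition at $\Gamma_b'$) produces the factor $|\lambda|^{-1/2q}$ upon passage to $L^1_H L^q_z$. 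For $w_2$, since $(1-Q)\overline g$ is a horizontal vector field independent of $z$, I would invoke the tensor representation $e^{t\Delta}=e^{t\Delta_H}\otimes e^{t\Delta_z}$ together with Lemma~\ref{LemmaHorizontalSemigroup} and the $\varepsilon$-independent bound $\|\overline{g}\|_{L^1(G)} \le C\|v\|^{p-1}$ (immediate from Hölder in $z$) to obtain $|\lambda|^{1/2}\|\partial_z w_2\|_{L^1_H L^q_z} \le C\|v\|^{p-1}$, which contributes the "$1$"-term.

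The principal obstacle is the cleanness of the second step: disentangling the "$1$" and "$|\lambda|^{-1/2q}\varepsilon^{2/s-2}$" contributions requires that the hydrostatic pressure correction be genuinely absorbed by the $L^1_H$-bound on $\overline{g}$, despite the unboundedness of the two-dimensional projection $Q$ on $L^\infty$ and $L^1$. The logarithmic $L^p$-bound of Proposition~\ref{prop: nearly-optimal estimate for laplacian} together with the $z$-independence of $Q\overline{g}$ is what allows the horizontal semigroup analysis of Lemma~\ref{LemmaHorizontalSemigroup} to enter effectively. A further technical point is the transfer of Lemma~\ref{LemmaAnisotropicLaplaceResolventOmega} from the Laplacian to the hydrostatic Stokes operator in the mixed-integrability regime $L^s_H L^q_z$ with $s \in (1,q]$; this should follow by repeating the localization argument of that lemma with the $B$-term absorbed by the largeness of $\lambda_0$, but the uniformity of the constants in $s$ needs to be checked.
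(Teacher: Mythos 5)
Your plan diverges substantially from the paper's proof: the paper does not perform the horizontal pressure split $w=w_1+w_2$ at all, but instead introduces $\lambda$-dependent vertical cut-off functions $\alpha_r,\beta_r$ supported near $\Gamma_u'$ and $\Gamma_b'$ with $r=\eta|\lambda|^{-1/2}$, differentiates the localized equations in $z$, and applies the anisotropic Laplace resolvent estimate (Lemma~\ref{LemmaAnisotropicLaplaceResolventOmega}) in $L^1_HL^q_z$ directly. There are two genuine gaps in your proposal.

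First, the decomposition $w=w_1+w_2$ with $(\lambda^*-\Delta)w_1=g+Bw_1$ and $(\lambda^*-\Delta)w_2=-(1-Q)\overline g$ is not consistent. The full dual solution satisfies $(\lambda^*-\Delta)w=g-\nabla_H\Pi$ with $\nabla_H\Pi=(1-Q)\overline g-Bw$; if $w_1$ solves a closed hydrostatic-type equation with the trace term $Bw_1$, then $w_2:=w-w_1$ satisfies $(\lambda^*-\Delta)w_2=-(1-Q)\overline g+Bw_2$, so the trace perturbation is still present in the $w_2$-equation and it is not a pure Laplace problem. The paper's analogous primal decomposition \eqref{eq: problem for v1}--\eqref{eq: problem for v2} avoids this by defining both pressure pieces $\pi_1,\pi_2$ in terms of the \emph{full} solution $v$ and the data $f$, not in terms of $v_1$ or $v_2$ separately; your version drops that caveat and as a result is ill-posed.

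Second, and more seriously, the claim that an ``interpolation in the vertical variable between $L^s_HL^q_z$ and $L^1_HL^q_z$'' produces the factor $|\lambda|^{-1/2q}$ does not hold up. Those two spaces have the same vertical exponent $q$, so any interpolation between them is horizontal, and on the bounded domain $\Omega'$ the inclusion $L^s_HL^q_z\hookrightarrow L^1_HL^q_z$ is just H\"older with a $\lambda$-independent constant; it cannot manufacture a negative power of $|\lambda|$. In the paper's argument the factor $|\lambda|^{-1/2q}$ has a concrete origin: it is $|\lambda|^{-1/2}\cdot r^{1/q-1}$ (the pressure term $(\partial_z\alpha)\nabla_H\Pi$ enters as a zeroth-order source, giving $|\lambda|^{-1/2}$, and $\|\partial_z\alpha\|_{L^q_z}\sim r^{1/q-1}$), which collapses to $\eta^{1/q-1}|\lambda|^{-1/2q}$ exactly because $r=\eta|\lambda|^{-1/2}$. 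Without a $\lambda$-dependent localization scale your plan has no mechanism to produce this decay, and the $\varepsilon^{2/s-2}$ factor (which blows up as $\varepsilon\to 0$, since $s>1$) would then be uncompensated, making the estimate useless at the point where it is applied in the proof of Proposition~\ref{thm: 2nd vertical derivative estimate}. You should work with vertical cut-offs at scale $r\sim|\lambda|^{-1/2}$ as in Proposition~\ref{vtwoestimate}, estimate the resulting source terms in $L^1_HL^q_z$ (using Lemma~\ref{lem: Ls estimate of F} with $s=1$ for the $g$-term and the $L^s$-pressure estimate plus $\|\partial_z\alpha\|_{L^q_z}\lesssim r^{1/q-1}$ for the pressure term), and absorb the $\|\partial_z w\|_{L^1_HL^q_z}$-terms by taking $\eta$ large.
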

\begin{remark}
	If one even has $p\in (3,\infty)$ then this result can be extended to the full range of $\lambda \in \Sigma_\theta$ by a similar argument as in the proof of Lemma~\ref{LemmaAnisotropicLaplaceResolventOmega}, compare Remark~\ref{FullRangeRemark}.
\end{remark}

%
%
%
%
%

For simplicity, we write $L^p_HL^q_z$ to refer to $L^p_H L^q_z(\Omega')=L^p(G'; L^q(-h, 0))$ when there is no ambiguity.
First we introduce the following result.

\begin{lemma} \label{lem: Ls estimate of F}
Let $\varepsilon\in(0,1)$, $x_0' \in G$, $p\in(1,\infty)$, $1/p+1/q=1$ and $v \in \Xip$ be arbitrary. Then, for $\delta_{\varepsilon,x'_0}$ defined as in \eqref{eq:DeltaRescaling} and
$s \in [1,q]$ we have
		\begin{equation*}
		\|\delta_{\varepsilon,x_0'} |v|^{p-2}v^*\|_{L^s(\Omega')} 
		\le C\|\delta_{\varepsilon,x_0'} |v|^{p-2}v^*\|_{L^s_HL^q_z} 
		\le C\varepsilon^{2/s-2} \|v\|_{L^\infty_HL^p_z(\Omega)}^{p-1}
		\end{equation*}
for a constant $C>0$ not depending on $\varepsilon$, $x'_0$ and $v$.
\end{lemma}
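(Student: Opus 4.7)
The plan is to verify the two inequalities separately by elementary Hölder-type arguments, exploiting that $\delta_{\varepsilon, x_0'}$ depends only on the horizontal variable $x'$ and that $|v|^{p-2}v^*$ has pointwise absolute value $|v|^{p-1}$ with $q(p-1)=p$.

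For the first inequality, I would use that $\Omega'$ has bounded vertical extent of length $h$ and that $s \le q$. Writing $F := \delta_{\varepsilon, x_0'}|v|^{p-2}v^*$, Hölder's inequality in the $z$-variable gives
\begin{equation*}
\|F(x', \cdot)\|_{L^s(-h,0)} \le h^{1/s - 1/q}\, \|F(x', \cdot)\|_{L^q(-h,0)}
\end{equation*}
for almost every $x' \in G'$. Taking the $L^s$-norm in $x'$ over $G'$ then yields $\|F\|_{L^s(\Omega')} \le h^{1/s-1/q}\|F\|_{L^s_H L^q_z}$, which is the first claim with $C = h^{1/s-1/q}$.

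For the second inequality, I would pull $\delta_{\varepsilon,x_0'}(x')$ out of the inner $L^q_z$-norm, since it is independent of $z$, and then use $q(p-1) = p$ to rewrite the vertical norm in terms of $\|v(x',\cdot)\|_{L^p_z}$:
\begin{equation*}
\|F\|_{L^s_H L^q_z}^s
= \int_{G'} \delta_{\varepsilon,x_0'}(x')^s \big\||v(x',\cdot)|^{p-1}\big\|_{L^q_z}^s \, dx'
= \int_{G'} \delta_{\varepsilon,x_0'}(x')^s \|v(x',\cdot)\|_{L^p_z}^{s(p-1)} \, dx'.
\end{equation*}
Estimating $\|v(x',\cdot)\|_{L^p_z} \le \|v\|_{L^\infty_H L^p_z(\Omega)}$ pointwise, it remains to bound the $L^s$-norm of $\delta_{\varepsilon,x_0'}$. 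From the scaling $\delta_\varepsilon(x') = \varepsilon^{-2}\delta(x'/\varepsilon)$, a change of variables gives $\|\delta_{\varepsilon,x_0'}\|_{L^s(\R^2)} = \varepsilon^{2/s - 2} \|\delta\|_{L^s(\R^2)}$, yielding
\begin{equation*}
\|F\|_{L^s_H L^q_z} \le \|\delta\|_{L^s(\R^2)}\, \varepsilon^{2/s - 2}\, \|v\|_{L^\infty_H L^p_z(\Omega)}^{p-1},
\end{equation*}
which is the desired estimate with constant $C = \|\delta\|_{L^s(\R^2)}$.

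There is essentially no obstacle here; the only point to check carefully is that $\mathrm{supp}\,\delta_{\varepsilon,x_0'} \subset B(x_0';\varepsilon) \subset G'$ (guaranteed for $x_0' \in G$ and $\varepsilon \in (0,1)$ by the size of $G' = (-2,3)^2$), so that no boundary effects intervene when integrating the rescaled bump on $G'$ versus on $\R^2$. The constant is uniform in $x_0'$, $\varepsilon$ and $v$, as required.
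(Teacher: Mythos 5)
Your proof is correct and follows essentially the same approach as the paper: pull the $z$-independent factor $\delta_{\varepsilon,x_0'}$ out of the inner $L^q_z$-norm, use $(p-1)q=p$ to convert $\||v|^{p-1}\|_{L^q_z}$ into $\|v\|_{L^p_z}^{p-1}$, bound this by $\|v\|_{L^\infty_H L^p_z}^{p-1}$ (implicitly using periodicity of $v$ to pass from $G'$ to $G$, which the paper notes explicitly), and scale out the $L^s$-norm of the bump. You additionally spell out the first inequality via Hölder in $z$, which the paper leaves implicit.
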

%
%
\begin{proof}
We set $F := \delta_{\varepsilon,x_0'} |v|^{p-2}v^*$. 
Noting that $|F|^q = \delta_{\varepsilon, x_0'}^q |v|^p$ and that $\delta_{\varepsilon,x_0'}$ is independent of $z$, we obtain
		\begin{align*}
		\|F\|_{L^s_HL^q_z} 
		&= \left[ 
		\int_{G'} \left( 
		\int_{-h}^0 \delta_\varepsilon(x' - x_0')^q |v(x', z)|^p\,dz 
		\right)^{s/q}\,dx' 
		\right]^{1/s} 
		\\
		&\le \left( 
		\int_{G'} \delta_\varepsilon(x' - x_0')^s\,dx' 
		\right)^{1/s} 
		\left[ 
		\sup_{x'\in G'} \left( \int_{-h}^0 |v(x', z)|^p\, dz \right)^{1/p} 
		\right]^{p/q} 
		\\
		&\le C\varepsilon^{2/s - 2} \|v\|_{L^\infty_HL^p_z(\Omega)}^{p-1},
		\end{align*}
where we used the periodicity of $v$ in the last step. This completes the proof.
\end{proof}
\begin{proof}[Proof of Proposition~\ref{prop: L1Lq estimate for dual problem}]
We set $r := \eta|\lambda|^{-1/2}$, where $\eta>0$ is a large number to be fixed later and $\lvert \lambda\rvert>\lambda_0$, where $\lambda_0>0$ is sufficiently large such that $\eta \lambda_0^{-1/2}<1$.
We introduce two cut-off functions $\alpha=\alpha_r$, $\beta=\beta_r$ in the vertical direction as follows:
		\begin{align*}
		&\alpha \in C^\infty([-h, 0]), \quad 
		\alpha\equiv0 \text{ in } [-h, -h + r ], \quad 
		\alpha\equiv1 \text{ in } [-h+2r, 0], \quad 
		|\partial_z^k\alpha(z)| \le Cr^{-k}, 
		\\
		&\beta \in C^\infty([-h, 0]), \quad 
		\beta\equiv1 \text{ in } [-h, -h+2r], \quad 
		\beta\equiv0 \text{ in } [-h+3r, 0], \quad 
		|\partial_z^k\beta(z)| \le Cr^{-k}
		\end{align*}
for $k=0,1,2$.
Then we may split the estimate for $\partial_zw$ into the ``upper'' and ``lower'' parts in $\Omega'$ as
		\begin{equation} \label{eq: decomposition to alpha and beta for w}
		\|\partial_z w\|_{L^1_HL^q_z} 
		\le \|\partial_z (\alpha w)\|_{L^1_HL^q_z} 
		+ \|\partial_z (\beta w)\|_{L^1_HL^q_z}.
		\end{equation}
\textbf{Step 1.} We consider $\alpha w$, which satisfies
		\begin{align*}
		\lambda^* \alpha w - \Delta(\alpha w) 
		&= \alpha F 
		- \alpha(\nabla_H \Pi) 
		- 2(\partial_z\alpha) (\partial_z w) 
		- (\partial_z^2\alpha) w,
		\\
		\partial_z(\alpha w) &= 0 \quad\text{on}\quad \Gamma_u'\cup\Gamma_b', 
		\qquad 
		\alpha w \text{ periodic on }\Gamma_l'
		\end{align*}
where $F:= \delta_{\varepsilon,x_0'} |v|^{p-2}v^*$ as in the proof of Lemma\ref{lem: Ls estimate of F}.
Differentiating this with respect to $z$ yields
		\begin{align*}
		\lambda^*\partial_z(\alpha w) - \Delta(\partial_z(\alpha w)) 
		&= \partial_z\left[
		\alpha F
		- 2(\partial_z\alpha) (\partial_z w)
		- (\partial_z^2\alpha) w
		\right]
		- (\partial_z\alpha) (\nabla_H \Pi) \quad\text{in}\quad \Omega', 
		\\
		\partial_z(\alpha w) &= 0 \quad\text{on}\quad \Gamma_u'\cup\Gamma_b', 
		\qquad 
		\partial_z(\alpha w) \text{ periodic on }\Gamma_l'.
		\end{align*}
Applying Lemma~\ref{LemmaAnisotropicLaplaceResolventOmega} in $L^1_HL^q_z(\Omega')$ we obtain
		\begin{align*}
		|\lambda|^{1/2} \|\partial_z(\alpha w)\|_{L^1_HL^q_z} 
		\le &C\left(
		\|\alpha F\|_{L^1_HL^q_z} 
		+ \|(\partial_z\alpha) (\partial_zw)\|_{L^1_HL^q_z} 
		+ \|(\partial_z^2\alpha) w\|_{L^1_HL^q_z}
		\right) 
		\\+ &C|\lambda|^{-1/2} \|(\partial_z\alpha) (\nabla_H \Pi)\|_{L^1_HL^q_z}.
		\end{align*}
We now estimate each term on the right-hand side.
By Lemma~\ref{lem: Ls estimate of F} with $s = 1$ we have
		\begin{equation*}
		\|\alpha F\|_{L^1_HL^q_z} 
		\le \lVert \alpha\rVert_\infty \|F\|_{L^1_HL^q_z} 
		\le C \|v\|_{L^\infty_HL^p_z(\Omega)}^{p-1}.
		\end{equation*}
		%
Using the estimate on derivatives of $\alpha$ we obtain
		\begin{equation*}
		\|(\partial_z\alpha) (\partial_zw)\|_{L^1_HL^q_z} 
		\le Cr^{-1} \|\partial_zw\|_{L^1_HL^q_z},
		\end{equation*}
		%
and by the Poincar\'e inequality we have
		\begin{equation*}
		\|(\partial_z^2\alpha) w\|_{L^1_HL^q_z} 
		\le C r^{-2} \|w\|_{L^1_HL^q_z(G'\times (-h,-h+2r))}
		\le Cr^{-1} \|\partial_zw\|_{L^1_HL^q_z}.
		\end{equation*}
		%
Using $L^s(G')\hookrightarrow L^1(G')$ as well as the estimate 
on the pressure term, cf. \cite[Theorem 3.1.]{HieberKashiwabara2015},
in $L^s(\Omega)$ for $s\in (1,q]$, we obtain
		\begin{align*}
		\|(\partial_z\alpha) (\nabla_H \Pi)\|_{L^1_HL^q_z} 
		\le C\|\partial_z\alpha\|_{L^q_z} \|\nabla_H\Pi\|_{L^s(G')}
		\le Cr^{1/q - 1} \|F\|_{L^s} 
		\le Cr^{1/q - 1} \varepsilon^{2/s-2} 
		\|v\|_{L^\infty_HL^p_z(\Omega)}^{p-1},
		\end{align*}
		%
		%
		%
Collecting the above estimates and plugging in $r=\eta\lvert \lambda\rvert^{-1/2}$ yields
		\begin{equation} \label{eq: alpha w}
		|\lambda|^{1/2} \|\partial_z(\alpha w)\|_{L^1_HL^q_z} 
		\le C(1 + \eta^{1/q-1} |\lambda|^{-1/2q} \varepsilon^{2/s-2}) 
		\|v\|_{L^\infty_HL^p_z(\Omega)}^{p-1} 
		+ C	\eta^{-1} |\lambda|^{1/2}
		\|\partial_z w\|_{L^1_HL^q_z}.
		\end{equation}
\textbf{Step 2.} We consider $\beta w$, which satisfies
		\begin{align*}
		\lambda^*\beta w - \Delta(\beta w) 
		&= \beta F 
		- \beta \nabla_H\Pi
		- 2(\partial_z\beta) (\partial_z w) 
		- (\partial_z^2\beta) w 
		\quad\text{in}\quad \Omega', 
		\\
		\partial_z(\beta w) &= 0 \text{ on } \Gamma_u', \quad 
		\beta w = 0 \text{ on } \Gamma_b', 
		\quad \partial_z(\beta w) \text{ periodic on }\Gamma_l'.
		\end{align*}
Applying Lemma~\ref{LemmaAnisotropicLaplaceResolventOmega} in $L^1_HL^q_z$ we obtain
		\begin{equation*}
		|\lambda|^{1/2} \|\partial_z(\beta w)\|_{L^1_HL^q_z} 
		\le C(\|\beta F\|_{L^1_HL^q_z} 
		+ \|(\partial_z\beta) (\partial_zw)\|_{L^1_HL^q_z} 
		+ \|(\partial_z^2\beta) w\|_{L^1_HL^q_z} 
		+ \|\beta (\nabla_H\Pi)\|_{L^1_HL^q_z}).
		\end{equation*}
A calculation similar to Step 1 then gives
		\begin{equation}  \label{eq: beta w}
		|\lambda|^{1/2} \|\partial_z(\beta w)\|_{L^1_HL^q_z} 
		\le C(1 + \eta^{1/q} |\lambda|^{-1/2q} \varepsilon^{2/s-2}) \|v\|_{L^\infty_HL^p_z}^{p-1} 
		+ C\eta^{-1} |\lambda|^{1/2} \|\partial_z w\|_{L^1_HL^q_z}.
		\end{equation}
Substituting \eqref{eq: alpha w} and \eqref{eq: beta w} into \eqref{eq: decomposition to alpha and beta for w} and choosing sufficiently large $\eta$ enable us to absorb the term $\lvert \lambda\rvert^{1/2}\lVert \partial_z w\rVert_{L^1_H L^q_z}$ from the right-hand side, which leads to
		\begin{equation*}
		|\lambda|^{1/2} \|\partial_zw\|_{L^1_HL^q_z} 
		\le C(1 +  |\lambda|^{-1/2q} \varepsilon^{2/s-2}) 
		\|v\|_{L^\infty_HL^p_z(\Omega)}^{p-1}
		\end{equation*}
This completes the proof.
\end{proof}
%
%
%
%
With the preparations above, we are now in the position to prove Proposition~\ref{thm: 2nd vertical derivative estimate}.

\begin{proof}[Proof of Proposition~\ref{thm: 2nd vertical derivative estimate}]
By \eqref{eq1: proof of 2nd vertical estimate} and Lemma~\ref{lem1: proof of 2nd vertical estimate} we have
		\begin{equation} \label{eq1: proof of 2nd vertical derivative estimate}
		\|v\|_{L^\infty_H L^p_z(\Omega)}^p 
		\le C|\lambda|^{-1/2} \|f\|_{L^\infty_H L^p_z(\Omega)} \|v\|_{L^\infty_H L^p_z(\Omega)}^{p-1} 
		+ I_2,
		\end{equation}
with $I_2$ as defined in \eqref{eq1: proof of 2nd vertical estimate}. 		
Substituting \eqref{eq: dual problem} and integrating by parts, we find that
		\begin{align*}
		I_2 
		&= (v, \delta_{\varepsilon, x_0'} |v|^{p-2}v^*)_{\Omega'} 
		= (v, \lambda^* w - \Delta w + \nabla_H \Pi)_{\Omega'} 
		= (\lambda v - \Delta v + \nabla_H\pi, w)_{\Omega'} 
		= (\partial_z f, w)_{\Omega'} \\
		&= -(f, \partial_z w)_{\Omega'},
		\end{align*}
where we have used that
$(v,\nabla_H \Pi)_{\Omega'}=0=(\nabla_H \pi,w)_{\Omega'}$ since $\text{div}_H \overline{v}=0=\text{div}_H \overline{w}$ for the third and
 $f|_{\Gamma_u\cup\Gamma_b} = 0$ for the last equality.
Using $1/p+1/q=1$ and applying Proposition~\ref{prop: L1Lq estimate for dual problem} we obtain
		\begin{align*}
		|I_2| 
		\le \|f\|_{L^\infty_H L^p_z(\Omega)} 
		\|\partial_z w\|_{L^1_H L^q_z} 
		\le C |\lambda|^{-1/2} \|f\|_{L^\infty_H L^p_z(\Omega)} 
		\|v\|_{L^\infty_H L^p_z(\Omega)}^{p-1} 
		\left(
		1+\lvert \lambda\rvert^{-1/2q} \varepsilon^{2/s-2} 
		\right)
		.
		\end{align*}
We set $\varepsilon = |\lambda|^{ -\frac{p}{2(p-2)} }$ for $\lvert \lambda\rvert>1$ and 
$s = \min\{\frac{4p}{3p+2}, \frac{p}{p-1}\} \in (1, q]$. This yields 
		\[
		-\frac1{2q} + \left(1 - \frac1s\right)\frac{p}{p-2} 
		=
		-\frac12 +\frac1{2p}
		+\left(1 - \frac1s\right)\frac{p}{p-2} 
		\le 
		-\frac14 + \frac1{2p} 
		< 0
		\]
which implies that $1+\lvert \lambda\rvert^{-1/2q} \varepsilon^{2/s-2}\le 2$ for $\lvert \lambda\rvert>1$ and therefore
		\begin{equation} \label{eq2: proof of 2nd vertical derivative estimate}
		|I_2| \le C|\lambda|^{-1/2} \|f\|_{L^\infty_H L^p_z(\Omega)} \|v\|_{L^\infty_H L^p_z(\Omega)}^{p-1},
		\quad \lvert \lambda\rvert>1.
		\end{equation}
The desired estimate then follows from \eqref{eq1: proof of 2nd vertical derivative estimate} and \eqref{eq2: proof of 2nd vertical derivative estimate} after dividing by $\|v\|_{L^\infty_H L^p_z(\Omega)}^{p-1}$. 
\end{proof}

\begin{proof}[Proof of Claim~\ref{SemigroupEstimatesHard}]
Estimate \eqref{eq:FirstVerticalEstimate} now follows from \eqref{eq:voneestimate} and Proposition~\ref{vtwoestimate}, whereas estimate \eqref{eq:SecondVerticalEstimate} follows from
Proposition \ref{thm: 2nd vertical derivative estimate}. Estimate 	\ref{eq:ThirdVerticalEstimate} follows from \eqref{eq:FirstVerticalEstimate}, \eqref{eq:SecondVerticalEstimate} and Claim~\ref{SemigroupEstimates}.
\end{proof}

\section{Proof of the main results}\label{sec:proofs}
Theorem~\ref{thm:semigroup} is a direct consequence of Claims~\ref{SemigroupEstimates} and~\ref{SemigroupEstimatesHard}.

For the non-linear problem in the space $\Xip$ we will make use of the following estimates. 

\begin{lemma}\label{NonlinearEstimates}
Let $p>3$. Then exists a constant $C>0$ such that for all $t>0$ and $v_i\in \Xipos$ satisfying $\nabla v_i\in \Xip$ and $\restr{v_i}{\Gamma_b}=0$ with $u_i=(v_i,w_i)$ as in \eqref{eq:WRelation} for $i=1,2$ we have
		\begin{align}
		\tag{i}
		\lVert e^{tA}\mathbb{P}(u_1\cdot \nabla)v_2\rVert_{L^\infty_H L^p_z}
		&\le C t^{-1/2}\lVert \nabla v_1\rVert_{L^\infty_H L^p_z} \lVert v_2\rVert_{L^\infty_H L^p_z},
		\\
		\tag{ii}
		\lVert \nabla e^{tA}\mathbb{P}(u_1\cdot \nabla)v_2\rVert_{L^\infty_H L^p_z}
		&\le C t^{-1/2} \lVert \nabla v_1\rVert_{L^\infty_H L^p_z} \lVert \nabla v_2\rVert_{L^\infty_H L^p_z},
		\\
		\tag{iii}
		\lVert \nabla e^{tA}\mathbb{P}(u_1\cdot \nabla)v_2\rVert_{L^\infty_H L^p_z}
		&\le C t^{-1} \lVert \nabla v_1\rVert_{L^\infty_H L^p_z} \lVert v_2\rVert_{L^\infty_H L^p_z},
		\end{align}
as well as 
		\begin{align}
		\tag{iv}
		\lVert e^{tA}\PP (u_1\cdot \nabla)v_2\rVert_{L^\infty_H L^p_z}
		&\le C \left(
		t^{-1/2}\lVert \nabla v_i\rVert_{L^\infty_H L^p_z} \lVert v_j \rVert_{L^\infty_H L^p_z}
		+ \lVert \nabla v_1\rVert_{L^\infty_H L^p_z} \lVert \nabla v_2\rVert_{L^\infty_H L^p_z}
		\right)
		\end{align}
where $\{i,j\}=\{1,2\}$.
\end{lemma}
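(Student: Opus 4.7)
The plan is to reduce each of (i)--(iv) to a direct application of the semigroup estimates in Theorem~\ref{thm:semigroup}, after two preparatory observations on $u_1=(v_1,w_1)$. First, by construction $w_1(x,y,z)=-\int_{-h}^z\text{div}_H v_1\,dr$, so $\partial_z w_1=-\text{div}_H v_1$ pointwise and the three-dimensional divergence $\text{div}\,u_1$ vanishes; together with $u_1|_{\Gamma_b}=0$ this yields the divergence-form identity
\begin{equation*}
(u_1\cdot\nabla)v_2=\sum_{k=1}^{3}\partial_k(u_{1,k}v_2).
\end{equation*}
Second, the vertical Poincar\'e inequality combined with H\"older in $z$ yields $\|v_1\|_{L^\infty(\Omega)}\le h^{1-1/p}\|\partial_z v_1\|_{L^\infty_H L^p_z}$ and $\|w_1\|_{L^\infty(\Omega)}\le h^{1-1/p}\|\nabla_H v_1\|_{L^\infty_H L^p_z}$, so in particular $\|u_1\|_{L^\infty(\Omega)}\le C\|\nabla v_1\|_{L^\infty_H L^p_z}$; the same reasoning gives $\|v_2\|_{L^\infty(\Omega)}\le C\|\nabla v_2\|_{L^\infty_H L^p_z}$.

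Estimate (i) follows by plugging in the divergence form and applying Theorem~\ref{thm:semigroup}(b)(ii) to $f=u_{1,k}v_2$ with $\|f\|_{L^\infty_H L^p_z}\le\|u_1\|_{L^\infty}\|v_2\|_{L^\infty_H L^p_z}\le C\|\nabla v_1\|\|v_2\|$. The same substitution with Theorem~\ref{thm:semigroup}(b)(iii) (the two-derivative bound with decay $t^{-1}$) in place of (b)(ii) proves (iii). For (ii) I would avoid the divergence form: combining Claim~\ref{SemigroupEstimates}(b)(ii) (horizontal derivatives) with the vertical estimate \eqref{eq:FirstVerticalEstimate} from Claim~\ref{SemigroupEstimatesHard} yields
$\|\nabla e^{t\Aipos}\PP f\|_{L^\infty_H L^p_z}\le Ct^{-1/2}e^{\beta t}\|f\|_{L^\infty_H L^p_z}$,
and applying this to $f=(u_1\cdot\nabla)v_2$ with the product bound $\|f\|_{L^\infty_H L^p_z}\le\|u_1\|_{L^\infty}\|\nabla v_2\|\le C\|\nabla v_1\|\|\nabla v_2\|$ produces (ii).

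Estimate (iv) has the same flavor but combines a singular $t^{-1/2}$-term with a time-uniform term. For the choice $(i,j)=(1,2)$, the $t^{-1/2}$-summand is exactly (i) and the addition of the uniform summand $\|\nabla v_1\|\|\nabla v_2\|$ is trivial. For the swapped choice $(i,j)=(2,1)$, I would return to the divergence form and use the alternative H\"older estimate $\|u_{1,k}v_2\|_{L^\infty_H L^p_z}\le \|u_{1,k}\|_{L^\infty_H L^p_z}\|v_2\|_{L^\infty(\Omega)}\le C(\|v_1\|+\|\nabla v_1\|)\|\nabla v_2\|$ (trading the gradient on $v_1$ for one on $v_2$ via the Poincar\'e bound on $v_2$), together with the time-uniform bound coming from the exponential stability of $e^{t\Apos}\PP$ on $\lpso$ and the embedding $D(\Apos)\hookrightarrow W^{2,p}(\Omega)\hookrightarrow L^\infty_H L^p_z(\Omega)$ available for $p>3$. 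The main obstacle is precisely the swapped case of (iv): every step must be arranged so as to fit one of the composed templates $e^{t\Aipos}\PP\partial_j$, $\partial_i e^{t\Aipos}\PP$, or $\partial_i e^{t\Aipos}\PP\partial_j$ for which Sections~\ref{sec:stokesEasy}--\ref{sec:stokesHard} supply $L^\infty_H L^p_z$-bounds (recall that $\PP$ in isolation is unbounded on $L^\infty_H L^p_z$), and the resulting singular and uniform contributions must match the asserted form on the nose; this is what forces the alternating use of the divergence and non-divergence representations of the nonlinearity.
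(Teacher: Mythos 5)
Your treatment of (i), (ii) and (iii) is essentially the paper's: you pass to the divergence form $(u_1\cdot\nabla)v_2=\nabla\cdot(u_1\otimes v_2)$ (respectively keep the non-divergence form for (ii)), use the sup-norm bounds $\lVert v_i\rVert_{L^\infty(\Omega)}+\lVert w_i\rVert_{L^\infty(\Omega)}\le C\lVert\nabla v_i\rVert_{L^\infty_HL^p_z}$, and invoke the $t^{-1/2}$ and $t^{-1}$ decay estimates from Sections 6--7. Your derivation of the sup-norm bounds via direct vertical Poincar\'e is a harmless variant of the Sobolev-embedding route in the paper. One minor caution: the target function $u_{1,k}v_2$ lies in $\Xip$ but not in $\Xipos$, so you must cite Claims~\ref{SemigroupEstimates} and~\ref{SemigroupEstimatesHard} (or Remark~\ref{rem:main}~c)) rather than Theorem~\ref{thm:semigroup}~b), which is stated for $f\in\Xipos$; you do in fact lean on the Claims in your (ii) argument, so this is a matter of consistent citation.

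The genuine gap is in (iv) for the case $\{i,j\}=\{2,1\}$, and you have correctly located where the difficulty sits without resolving it. Your proposed route---divergence form plus the H\"older bound $\lVert u_{1,k}v_2\rVert_{L^\infty_HL^p_z}\le\lVert u_{1,k}\rVert_{L^\infty_HL^p_z}\lVert v_2\rVert_{L^\infty(\Omega)}$---does produce the $t^{-1/2}\lVert v_1\rVert\lVert\nabla v_2\rVert$ term for the horizontal divergence pieces $\partial_k(v_{1,k}v_2)$, $k=1,2$. But for the vertical piece $\partial_z(w_1v_2)$ the same H\"older step yields $\lVert w_1v_2\rVert_{L^\infty_HL^p_z}\le C\lVert\nabla v_1\rVert\lVert\nabla v_2\rVert$, and pushing this through the template $e^{t\Aipos}\PP\partial_z$ gives $Ct^{-1/2}\lVert\nabla v_1\rVert\lVert\nabla v_2\rVert$, which is strictly worse than the asserted time-uniform term. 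The vague appeal to ``exponential stability of $e^{t\Apos}\PP$ on $\lpso$ and the embedding $D(\Apos)\hookrightarrow W^{2,p}\hookrightarrow L^\infty_HL^p_z$'' does not close this: any bootstrap through $D(\Apos)$ or through $W^{2,p}$-regularity of $e^{tA}$ still costs inverse powers of $t$. The missing observation is algebraic, not analytic: since $w_1|_{\Gamma_b}=0$ (by construction) and $w_1|_{\Gamma_u}=0$ (because $\div_H\overline{v_1}=0$), the function $w_1v_2$ vanishes on $\Gamma_u\cup\Gamma_b$, hence $\overline{\partial_z(w_1v_2)}=0$ and $\PP\partial_z(w_1v_2)=\partial_z(w_1v_2)=-(\div_H v_1)v_2+w_1\partial_z v_2$. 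This is already an element of $L^\infty_HL^p_z$ with $\lVert\PP\partial_z(w_1v_2)\rVert_{L^\infty_HL^p_z}\le C\lVert\nabla v_1\rVert\lVert\nabla v_2\rVert$, and one may then apply $e^{t\Aipos}$ directly with the plain uniform semigroup bound --- no derivative on the outside of the semigroup, no singular factor. Without this identity your argument cannot reproduce the non-singular summand in (iv).
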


\begin{proof}
We begin by noting that
		\begin{align*}
		\lVert (u_1 \cdot\nabla) v_2\rVert_{L^\infty_H L^p_z}
		&\le  \left(
		\lVert v_1 \rVert_{L^\infty(\Omega)}+\lVert w_1\rVert_{L^\infty(\Omega)}
		\right)
		\lVert \nabla v_2\rVert_{L^\infty_H L^p_z}.
		\end{align*}
So, using Sobolev embeddings, the Poincar{\'e} inequality and $\Xip\hookrightarrow L^p(\Omega)^2$ we obtain
		\[
		\lVert v_i \rVert_{L^\infty(\Omega)}
		\le C \lVert v_i\rVert_{W^{1,p}(\Omega)}
		\le C \lVert \nabla v_i\rVert_{L^p(\Omega)}
		\le C \lVert \nabla v_i\rVert_{L^\infty_H L^p_z}.
		\]
Similarly one has 
		\[
		\lVert w_i\rVert_{L^\infty(\Omega)}
		\le C\lVert \text{div}_H v_i \lVert_{L^\infty_H L^p_z}
		\le C\lVert \nabla v_i \rVert_{L^\infty_H L^p_z}.
		\]
This allows us to obtain (ii) via Claim~\ref{SemigroupEstimates} and \ref{SemigroupEstimatesHard} as well as
		\begin{align*}
		\lVert \nabla e^{tA}\mathbb{P}(u_1\cdot \nabla)v_2\rVert_{L^\infty_H L^p_z}
		\le C t^{-1/2} \lVert (u_1 \cdot\nabla) v_2\rVert_{L^\infty_H L^p_z}
		\le C t^{-1/2} \lVert \nabla v_1\rVert_{L^\infty_H L^p_z} \lVert \nabla v_2\rVert_{L^\infty_H L^p_z}.
		\end{align*}
To prove (i) we proceed analogously as above to obtain
		\[
		\lVert v_1\otimes v_2 \rVert_{L^\infty_H L^p_z}
		\le C \lVert \nabla v_i\rVert_{L^\infty_H L^p_z} \lVert v_j\rVert_{L^\infty_H L^p_z},
		\quad
		\lVert w_1 v_2 \rVert_{L^\infty_H L^p_z}
		\le C \lVert \nabla v_1\rVert_{L^\infty_H L^p_z} \lVert v_2\rVert_{L^\infty_H L^p_z}
		\]
where $\{i,j\}=\{1,2\}$ and since $\text{div}\, u_i=0$ we can write 
		\[
		(u_1\cdot \nabla)v_2
		=\nabla \cdot(u_1\otimes v_2)
		=\nabla_H\cdot (v_1\otimes v_2)
		+ \partial_z (w_1 v_2)
		\]
which allows us to apply Claim~\ref{SemigroupEstimates} and \ref{SemigroupEstimatesHard} yielding
		\begin{align*}
		\lVert e^{tA}\mathbb{P}(u_1\cdot \nabla)v_2\rVert_{L^\infty_H L^p_z}
		&= \lVert e^{tA}\mathbb{P}\nabla \cdot (u_1\otimes v_2)\rVert_{L^\infty_H L^p_z}
		\\
		&\le \lVert  e^{tA}\mathbb{P}\nabla_H \cdot (v_1\otimes v_2)\rVert_{L^\infty_H L^p_z}
		+ \lVert e^{tA}\mathbb{P}\partial_z (w_1 v_2)\rVert_{L^\infty_H L^p_z}
		\\
		&\le C t^{-1/2}\left( 
		\lVert v_1\otimes v_2\rVert_{L^\infty_H L^p_z}+\lVert w_1 v_2\rVert_{L^\infty_H L^p_z}
		\right)	
		\\
		&\le C t^{-1/2}\lVert \nabla v_1\rVert_{L^\infty_H L^p_z}\lVert v_2\rVert_{L^\infty_H L^p_z},
		\end{align*}
and estimate (iii) is obtained analogously via
		\begin{align*}
		\lVert \nabla e^{tA}\mathbb{P}(u_1\cdot \nabla)v_2\rVert_{L^\infty_H L^p_z}
		\le C t^{-1}\left( 
		\lVert v_1\otimes v_2\rVert_{L^\infty_H L^p_z}+\lVert w_1 v_2\rVert_{L^\infty_H L^p_z}
		\right)	
		\le C t^{-1}\lVert \nabla v_1\rVert_{L^\infty_H L^p_z}\lVert v_2\rVert_{L^\infty_H L^p_z}.
		\end{align*}
To prove (iv) we observe that $w_i=0$ on $\Gamma_u\cup \Gamma_b$ implies that
		\[
		\PP \partial_z (w_1 v_2)
		=\partial_z (w_1 v_2)
		=-(\text{div}_H v_1)v_2+w_1\partial_z v_2
		\]
and the right-hand side is further estimated via
		\[
		\lVert (\text{div}_H v_1)v_2\rVert_{L^\infty_H L^p_z} 
		\le C\lVert \nabla v_1\rVert_{L^\infty_H L^p_z} \lVert v_2\rVert_{L^\infty(\Omega)}
		\le C \lVert \nabla v_1\rVert_{L^\infty_H L^p_z} \lVert \nabla v_2 \rVert_{L^\infty_H L^p_z},
		\]
and
		\[
		\lVert w_1\partial_z v_2\rVert_{L^\infty_H L^p_z}
		\le \lVert w_1 \rVert_{L^\infty(\Omega)}\lVert \partial_z v_2\rVert_{L^\infty_H L^p_z}
		\le C\lVert \nabla v_1\rVert_{L^\infty_H L^p_z}\lVert \nabla v_2\rVert_{L^\infty_H L^p_z}.
		\]
Applying Claim~\ref{SemigroupEstimates} then yields that for $\{i,j\}=\{1,2\}$ we have
		\begin{align*}
		\lVert e^{tA}\PP \nabla_H \cdot (v_1\otimes v_2)\rVert_{L^\infty_H L^p_z}
		\le C t^{-1/2}\lVert v_1\otimes v_2\rVert_{L^\infty_H L^p_z}
		\le C t^{-1/2} \lVert \nabla v_i\rVert_{L^\infty_H L^p_z} \lVert v_j\rVert_{L^\infty_H L^p_z}, 
		\end{align*}
as well as
		\begin{align*}		
		\lVert e^{tA}\PP \partial_z (w_1 v_2)\rVert_{L^\infty_H L^p_z}
		&\le C  \lVert \nabla v_1\rVert_{L^\infty_H L^p_z} \lVert \nabla v_2\rVert_{L^\infty_H L^p_z}
		\end{align*}
which implies (iv) and completes the proof.
\end{proof}

It has been proven in \cite{GGHHK17} that the operator $\Apos$ possesses maximal $L^q$-regularity. In \cite{GigaGriesHieberHusseinKashiwabara2017} the authors applied this to develop a solution theory for initial data
		\[
		a\in X_{\gamma}:=(\lpso,D(A_p))_{1-1/q,q}\subset B_{pq}^{2-2/q}(\Omega)^2\cap \lpso
		\]
where $p,q\in (1,\infty)$ satisfy $1/p + 1/q \le 1$. In particular, one has the following result.

\begin{lemma}\label{SmoothData}
	Let $a\in X_\gamma$. Then there exists a unique strong solution to the primitive equations \eqref{eq:PrimitiveEquations} with boundary conditions \eqref{eq:bc} satisfying
	\[
	v\in C([0,\infty);X_\gamma).
	\]
\end{lemma}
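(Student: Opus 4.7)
The plan is to view \eqref{eq:PrimitiveEquations} as the abstract semilinear Cauchy problem
\[
\partial_t v - \Apos v = -\PP\bigl((u\cdot\nabla)v\bigr), \quad v(0)=a,
\]
on $\lpso$, and to combine the maximal $L^q$-regularity of $\Apos$ established in \cite{GGHHK17} with the global $H^1$ a priori estimates of Cao--Titi \cite{CaoTiti2007}, adapted to mixed Dirichlet--Neumann boundary conditions by Kukavica--Ziane \cite{Ziane2007}. This is precisely the strategy carried out in detail in \cite{HieberKashiwabara2015, GigaGriesHieberHusseinKashiwabara2017}.

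First, I would exploit maximal $L^q$-regularity to set up a standard contraction mapping on the maximal regularity space
\[
\mathbb{E}_T := L^q(0,T;D(\Apos)) \cap W^{1,q}(0,T;\lpso),
\]
which embeds continuously into $C([0,T];X_\gamma)$ and whose trace at $t=0$ is exactly $X_\gamma$. The nonlinear map $v \mapsto \PP((u\cdot\nabla)v)$ is locally Lipschitz from $\mathbb{E}_T$ into $L^q(0,T;\lpso)$ provided $1/p+1/q\le 1$, thanks to the anisotropic bilinear estimates used to bound $(v\cdot\nabla_H)v + w(v)\,\partial_z v$ in the $L^p$-scale, compare \cite[Section 5]{HieberKashiwabara2015}. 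A fixed-point argument then produces a unique local strong solution on a maximal interval $[0,T^*)$ with $v\in C([0,T^*);X_\gamma)$ and the regularity \eqref{eq:StrongSolution} on every compact subinterval of $(0,T^*)$.

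Second, to rule out finite-time blow-up I would propagate the global $H^1$-bounds of \cite{CaoTiti2007, Ziane2007} up to the $X_\gamma$-scale via a bootstrap argument. The analyticity of $e^{t\Apos}$ places $v(t_0)$ in $D(\Apos) \hookrightarrow H^2(\Omega)^2$ for any $t_0\in(0,T^*)$, so that the classical Cao--Titi energy estimates apply and yield a bound on $\|v(t)\|_{H^1}$ for all $t\ge t_0$. Combining this $H^1$-bound with the maximal regularity estimate, the boundedness of $\PP$ on $L^p$, and the bilinear control on the nonlinearity in $\mathbb{E}_T$, one obtains an a priori bound on $\|v(t)\|_{X_\gamma}$ on any finite subinterval of $[0,T^*)$, forcing $T^*=\infty$. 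Uniqueness in the class $C([0,\infty);X_\gamma)$ is inherited from the local contraction argument.

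The hard part will be this global bootstrap: while local well-posedness is essentially routine given maximal regularity, lifting the Cao--Titi $H^1$-estimate to the Besov-type trace space $X_\gamma\hookrightarrow B^{2-2/q}_{pq}(\Omega)^2$ requires a careful interplay between the anisotropic $L^p$-structure of the primitive equations, the boundedness of $\PP$ on $L^p(\Omega)^2$, and interpolation between $H^1$ and higher Sobolev norms. All of these ingredients have been assembled in \cite{HieberKashiwabara2015, GigaGriesHieberHusseinKashiwabara2017}, so that the statement follows directly by invoking those results.
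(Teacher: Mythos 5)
Your proposal is correct and follows the same route as the paper, which does not supply a self-contained proof but simply invokes maximal $L^q$-regularity of $\Apos$ from \cite{GGHHK17} and the global solution theory in the trace space $X_\gamma=(\lpso,D(A_p))_{1-1/q,q}$ built on it in \cite{GigaGriesHieberHusseinKashiwabara2017}. Your sketch of what those references contain -- contraction mapping in the maximal regularity class $L^q(0,T;D(\Apos))\cap W^{1,q}(0,T;\lpso)$, whose trace at $t=0$ is $X_\gamma$, combined with the Cao--Titi/Kukavica--Ziane $H^1$ a priori bounds and a bootstrap to rule out finite-time blow-up -- is an accurate description of the argument one would import, so the two treatments coincide.
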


This enables a key step in the proof of our main result as it guarantees the existence of smooth reference solutions $v_{\text{ref}}$ to the primitive equations given sufficiently smooth reference data $a_{\text{ref}}$. 
In order to construct $v$ as a solution to problem \eqref{eq:PrimitiveEquations} with initial data $a$ we construct $V:=v-v_{\text{ref}}$ by an iterative method using initial data $a_0:=a-a_{\text{ref}}$.
Before we do so, we establish an auxiliary lemma.

\begin{lemma}\label{RecursiveLemma}
Let $(a_n)_{n\in\N}$ be a sequence of positive real numbers such that 
		\[
		a_{m+1}\le a_0+c_1 a_m^2+c_2 a_m \quad \text{ for all } m\in \N 
		\]
and constants $c_1>0$ and $c_2\in (0,1)$ such that $4c_1 a_0<(1-c_2)^2$. Then $a_m<\frac{2}{1-c_2}a_0$ for all $m\in\N$. 
\end{lemma}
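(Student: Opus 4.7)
Plan: The statement is an induction on $m$ with threshold $M := \tfrac{2a_0}{1-c_2}$. The idea is to observe that the recursion $a_{m+1}\le f(a_m)$, where $f(x):=a_0+c_1x^2+c_2x$, is a map with a smallest positive fixed-point equation $f(x)=x$, i.e.\ $c_1x^2-(1-c_2)x+a_0=0$, whose discriminant is $(1-c_2)^2-4c_1a_0$, which is positive precisely by the standing hypothesis $4c_1a_0<(1-c_2)^2$. Thus $f$ has two positive fixed points, and one shows that the bound $M$ lies strictly between them, so that $f(M)<M$; combined with the monotonicity of $f$ on $[0,\infty)$, induction then propagates the bound $a_m<M$.

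First I would check the base case $a_0<M$, which is immediate from $c_2\in(0,1)$ since $\tfrac{2}{1-c_2}>1$. For the inductive step, assuming $a_m<M$, the key computation is
\[
f(M)=a_0+c_1M^2+c_2M
= a_0+\frac{4c_1 a_0^{2}}{(1-c_2)^2}+\frac{2c_2 a_0}{1-c_2}.
\]
Rewriting the last two terms and using the hypothesis $4c_1a_0<(1-c_2)^2$, one finds
\[
f(M)-M = a_0+\frac{4c_1 a_0^{2}}{(1-c_2)^2}+\frac{2c_2 a_0-2a_0}{1-c_2}
= a_0\Bigl(\frac{4c_1 a_0}{(1-c_2)^2}-1\Bigr)<0,
\]
so $f(M)<M$. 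Since $f$ is strictly increasing on $[0,\infty)$ (both $c_1,c_2>0$ and $x\ge 0$), the inductive hypothesis $a_m<M$ yields
\[
a_{m+1}\le f(a_m)< f(M)<M,
\]
completing the induction.

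There is no real obstacle here; the only point requiring a little care is making sure to use the strict form of the hypothesis $4c_1a_0<(1-c_2)^2$, which is exactly what produces the strict inequality $a_m<M$ rather than merely $\le M$, and to note that the map $f$ is monotone on the nonnegative reals so that bounds propagate cleanly through the recursion.
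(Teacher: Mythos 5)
Your proof is correct and rests on the same underlying idea as the paper, namely monotonicity of the map $f(x)=a_0+c_1x^2+c_2x$ together with a barrier value below which the iterates are trapped. The only difference is cosmetic: the paper computes the smaller fixed point $x_0$ of $f$ explicitly and verifies $a_0<x_0<\tfrac{2a_0}{1-c_2}$, whereas you check the barrier condition $f(M)<M$ at $M=\tfrac{2a_0}{1-c_2}$ directly, which is a mild streamlining of the same algebra.
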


\begin{proof}
Let $x_0$ be the smallest solution to the equation $x=a_0+c_1 x^2+c_2x$. Then
		\[
		0<x_0=\frac{(1-c_2)-\sqrt{(1-c_2)^2-4c_1a_0}}{2c_1}
		=\frac{1}{2c_1}\frac{4c_1 a_0}{(1-c_2)+\sqrt{(1-c_2)^2-4c_1 a_0}}
		<\frac{2}{1-c_2}a_0,
		\]
and since $p(x)=a_0+c_1 x^2+c_2 x$ is an increasing function on $[0,\infty)$ it follows that $p(x)\le x_0$ for $x\in[0,x_0]$. The condition $c_2\in(0,1)$ further yields
		\[
		(1-c_2)+\sqrt{(1-c_2)^2-4c_1a_0}<2
		\]
from which it follows that $a_0<x_0$ and thus the claim is easily derived by induction.
\end{proof}
		
We now prove our main result.

\begin{proof}[Proof of Theorem~\ref{MainTheorem}]
\textbf{Step 1:} \textit{Decomposition of data.}

Given an initial value $a\in \Xipos$ we will split it into a smooth part $a_{\text{ref}}$ and a small rough part $a_0$, where $a=a_{\text{ref}}+a_0$, as follows:
Since $\Aipos$ is densely defined on $\Xipos$ we take $a_{\text{ref}}\in D(\Aipos)$ such that
 $a_0:=a-a_{\text{ref}}$ can be assumed to be arbitrarily small in $\Xipos$.
Now let $q\in (1,\infty)$ be such that $1/q+1/p\le 1$ and $2/q+3/p<1$.  The latter condition on $q$ then yields the embedding $X_\gamma \hookrightarrow C^1(\overline{\Omega})^2$. 
Due to $D(\Aipos)\subset D(\Apos)\subset X_{\gamma}$ it follows from Lemma \ref{SmoothData} that taking $a_{\text{ref}}$ as initial data of the primitive equations, there exists a function $v_{\text{ref}}\in C([0,\infty);X_\gamma)$ solving the primitive equations with initial data $v_{\text{ref}}(0)=a_{\text{ref}}$.

\textbf{Step 2:} \textit{Estimates for the construction of a local solution.}

We will show that there exists a constant $C_0>0$ such that if 
$a_0 \in \Xipos$ satisfies $\lVert a_0\rVert_{L^\infty_H L^p_z}<C_0$ then there exists a time $T>0$ and a unique function
		\[
		V\in\mathcal{S}(T):=\{
		V\in C([0,T];\Xipos):\lVert \nabla V(t)\rVert_{L^\infty_H L^p_z}
		= o(t^{-1/2})
		\}, 
		\]
where
		\[
		\lVert V\rVert_{\mathcal{S}(T)}
		=\max\left\{
		\sup_{0<t<T}\lVert V(t)\rVert_{L^\infty_H L^p_z},
		\sup_{0<t<T}t^{1/2}\lVert \nabla V(t)\rVert_{L^\infty_H L^p_z}
		\right\}
		\]
such that $v=v_{\text{ref}}+V$ solves problem \eqref{eq:PrimitiveEquations} on 
$(0,T)$ with initial value $v(0)=a$. In order to construct $V$ we define the iterative sequence of functions $(V_m)_{m\in\N}$ via
		\begin{align}
		V_0(t)=e^{tA}a_0, \quad V_{m+1}(t)=e^{tA}a_0+\int_0^t e^{(t-s)A}F_m(s)\,ds
		\end{align}
where 
		\[
		F_m:=-\PP \left( 
		(U_m\cdot\nabla)V_m
		+(U_m\cdot\nabla)v_{\text{ref}}
		+(u_{\text{ref}}\cdot\nabla)V_m 
		\right)
		\]
and $U_m=(V_m,W_m)$, $u_{\text{ref}}=(v_{\text{ref}},w_{\text{ref}})$ with the vertical component $w$ given by the horizontal component $v$ via the relation \eqref{eq:WRelation}. 
%
%
%
%
We will now estimate this sequence in $\mathcal{S}(T)$ for some value $T>0$ to be fixed later on. 
Since $\PP a_0=a_0$ we have
		\[
		\lVert V_0\rVert_{\mathcal{S}(T)}\le C \lVert a_0\rVert_{L^\infty_H L^p_z},
		\quad T\in (0,\infty)
		\]
by Lemma~\ref{SemigroupEstimates}. 
For $m\ge 1$ we will first consider the gradient estimates. We have already estimated the term $\nabla e^{tA}a_0$, whereas for the convolution integrals we have
		\begin{align*}
		\left\lVert \int_0^{t/2}\nabla e^{(t-s)A}\PP \left(
		(U_m(s)\cdot \nabla) V_m(s)
		\right)\,ds	\right\rVert_{L^\infty_H L^p_z}
		&\le C \left(\int_0^{t/2}(t-s)^{-1}s^{-1/2}\,ds\right) K_m(t) H_m(t)
		\\
		&= C t^{-1/2} K_m(t) H_m(t)
		\end{align*}
by Lemma~\ref{NonlinearEstimates} (iii) where 
		\begin{align*}
		K_m(t):=\sup_{0<s<t} s^{1/2}\lVert \nabla V_m(s)\rVert_{L^\infty_H L^p_z}, 
		\quad
		H_m(t):=\sup_{0<s<t} \lVert V_m(s)\rVert_{L^\infty_H L^p_z}
		\end{align*}
and via Lemma~\ref{NonlinearEstimates} (ii) we obtain
		\begin{align*}
		\left\lVert \int_{t/2}^t\nabla e^{(t-s)A}\PP \left(
		U_m(s)\cdot \nabla V_m(s)
		\right)\,ds\right\rVert_{L^\infty_H L^p_z}
		&\le C \left(
		\int_{t/2}^t(t-s)^{-1/2}s^{-1}
		\right)K_m(t)^2
		\\
		&\le C t^{-1/2} K_m(t)^2.
		\end{align*}
Finally applying Lemma~\ref{NonlinearEstimates} (ii) to the two remaining mixed terms yields
		\begin{align*}
		\left\lVert \int_0^t\nabla e^{(t-s)A}\PP \left(
		U_m(s)\cdot \nabla) v_{\text{ref}}(s)
		\right)\,ds\right\rVert_{L^\infty_H L^p_z}
		&\le C \left(
		\int_0^t (t-s)^{-1/2}s^{-1/2}\,ds
		\right) \sup_{0<s<t}\lVert \nabla v_{\text{ref}}(s)\rVert_{L^\infty_H L^p_z} K_m(t)
		\\
		&=C  \sup_{0<s<t}\lVert \nabla v_{\text{ref}}(s)\rVert_{L^\infty_H L^p_z} K_m(t),
		\\
		\left\lVert \int_0^t\nabla e^{(t-s)A}\PP \left(
		u_{\text{ref}}(s)\cdot \nabla) V_m(s)
		\right)\,ds\right\rVert_{L^\infty_H L^p_z}
		&\le C \sup_{0<s<t}\lVert \nabla v_{\text{ref}}(s)\rVert_{L^\infty_H L^p_z} K_m(t).
		\end{align*}
We set $R:=\sup_{0\le t\le T_0} \lVert \nabla v_{\text{ref}}(t)\rVert_{L^\infty_H L^p_z}$ and note that $0<R<\infty$ by Lemma~\ref{SmoothData}, since $v_{\text{ref}}\in C([0,\infty);X_{\gamma})$ and $2/q+3/p<1$ implies that $X_{\gamma}\subset B^{2-2/q}_{pq}(\Omega)^2\hookrightarrow C^1(\overline{\Omega})^2$ via embedding theory, cf. \cite[Section 3.3.1]{Triebel}.
Taking these estimates together yields
		\begin{align}\label{eq:EstimateWithGradient}
		t^{1/2}\lVert \nabla V_{m+1}(t)\rVert_{L^\infty_H L^p_z} 
		&\le C_1\biggl( \lVert a_0\rVert_{L^\infty_H L^p_z}
		+ K_m(t) H_m(t)
		+ K_m(t)^2
		+ R t^{1/2} K_m(t)\biggr).
		\end{align}
To estimate $\lVert V_{m+1}(t)\rVert_{L^\infty_H L^p_z}$ we apply Lemma~\ref{NonlinearEstimates} (i) to obtain
		\begin{align*}
		\left\lVert \int_0^t e^{(t-s)A}\PP \left(
		(U_m(s)\cdot\nabla)V_m(s)
		\right)\,ds \right\rVert_{L^\infty_H L^p_z}
		&\le C \left(\int_0^t (t-s)^{-1/2}s^{-1/2}\,ds\right) K_m(t) H_m(t)
		\\
		&= C K_m(t) H_m(t)
		\end{align*}
whereas for the mixed terms Lemma~\ref{NonlinearEstimates} (iv) yields
		\begin{align*}
		\lVert e^{(t-s)A}\PP (U_m(s)\cdot\nabla)v_{\text{ref}}(s)\rVert_{L^\infty_H L^p_z}
		\le &C \biggl(
		(t-s)^ {-1/2}\lVert \nabla  v_{\text{ref}}(s)\rVert_{L^\infty_H L^p_z} 
		\lVert V_m(s)\rVert_{L^\infty_H L^p_z}
		\\&+\lVert \nabla V_m(s)\rVert_{L^\infty_H L^p_z} \lVert \nabla v_{\text{ref}}(s)\rVert_{L^\infty_H L^p_z}
		\biggr)
		\end{align*}
and therefore
		\begin{align*}
		\left\lVert
		\int_0^t e^{(t-s)A}\PP ( (U_m(s)\cdot\nabla)v_{\text{ref}}(s))\,ds
		\right\rVert_{L^\infty_H L^p_z}
		&\le C \left(
		\int_0^t (t-s)^{-1/2}\,ds
		\right) R H_m(t)
		+ C \left( \int_0^t s^{-1/2}\,ds\right) R K_m(t)
		\\&= C R t^{1/2} (H_m(t)+K_m(t)),
		\end{align*}
and the other mixed term can be treated analogously due to the symmetry of the right-hand side in (iv). 
Taking these estimates together yields
		\begin{align}\label{eq:EstimateWithoutGradient}
		\lVert V_{m+1}(t)\rVert_{L^\infty_H L^p_z}
		&\le C_1 \biggl(\lVert a_0\rVert_{L^\infty_H L^p_z}
		+ K_m(t) H_m(t)
		+ t^{1/2}H_m(t)
		+ t^{1/2}K_m(t)\biggr).
		\end{align}
Since the right-hand sides of \eqref{eq:EstimateWithGradient} and \eqref{eq:EstimateWithoutGradient} are increasing functions we obtain for $t>0$ that
		\begin{align}\label{Recursive}
		\begin{split}
		K_{m+1}(t)&\le C_1\biggl( 
		\lVert a_0\rVert_{L^\infty_H L^p_z}
		+ K_m(t)H_m(t)
		+ K_m(t)^2
		+ R t^{1/2}K_m(t)
		\biggr),
		\\
		H_{m+1}(t)&\le C_1\biggl( 
		\lVert a_0\rVert_{L^\infty_H L^p_z}
		+ K_m(t)H_m(t)
		+ R t^{1/2}H_m(t)
		+ R t^{1/2}K_m(t)\biggr).
		\end{split}
		\end{align}
Now let $T\in (0,T_0)$ where $T_0>0$ is chosen in such a way that 
		\[
		8C_1 R T_0^{1/2}<1.
		\]
		%
Then for all $0<t\le T<T_0$ we have
		\[
		\lVert V_{m+1}\rVert_{\mathcal{S}(t)}
		\le C_1\lVert a_0\rVert_{L^\infty_H L^p_z}
		+2C_1 \lVert V_{m}\rVert_{\mathcal{S}(t)}^2
		+\frac{1}{4}\lVert V_{m}\rVert_{\mathcal{S}(t)}.
		\]
By Lemma~\ref{RecursiveLemma} it follows that if $8C_1^2\lVert a_0\rVert_{L^\infty_H L^p_z}<(1-1/4)^2$, then for all $m\in \N$ we have
		\begin{align}\label{eq: Vm uniformly bounded}
		\lVert V_{m}\rVert_{\mathcal{S}(t)}
		\le \frac{8}{3}C_1\lVert a_0\rVert_{L^\infty_H L^p_z}, 
		\quad 
		t\in (0,T].
		\end{align}
The property $\lim_{t \to 0+}t^{1/2}\lVert \nabla V_m(t)\rVert_{L^\infty_H L^p_z}=0$ is then easily obtained via induction and Claim~\ref{SemigroupEstimates} (e). 
%
%
%
%

\textbf{Step 3:} \textit{Convergence.}

We now show that $(V_m)_{m\in\N}$ is a Cauchy sequence in $\mathcal{S}(T)$ if $\lVert a_0\rVert_{L^\infty_H L^p_z}$ is sufficiently small. For this purpose we consider the new sequence
		\[
		\tilde{V}_m:=V_{m+1}-V_m,\quad m\ge 0.
		\]
Using the previous estimates we already know that $\lVert \tilde{V}_0\rVert_{\mathcal{S}(T)}<\infty$. 
To estimate this sequence further we use
		\[
		F_m-F_{m-1}=\left( \tilde{U}_{m-1}\cdot \nabla\right)V_m
		+\left( U_{m-1}\cdot \nabla\right)\tilde{V}_{m-1}
		+\left( \tilde{U}_{m-1}\cdot \nabla\right)v_{\text{ref}}
		+\left( U_{\text{ref}}\cdot \nabla\right)\tilde{V}_{m-1}
		\]
and proceed as above to obtain
		\begin{align}
		\begin{split}
		t^{1/2}\lVert \nabla \tilde{V}_m(t)\rVert_{L^\infty_H L^p_z}
		\le C_2\biggl(&
		2 H_m(t) \tilde{K}_{m-1}(t)
		+2 \tilde{H}_{m-1}(t) K_{m-1}(t)
		+K_m(t)\tilde{K}_{m-1}(t)
		\\&+K_{m-1}(t)\tilde{K}_{m-1}(t)
		+2 R t^{1/2} \tilde{K}_{m-1}
		\biggr)
		\end{split}
		\end{align}
as well as
		\begin{align}
		\lVert \tilde{V}_m(t)\rVert_{L^\infty_H L^p_z}
		&\le C_2 \biggl(
		\tilde{K}_{m-1}(t)\left[H_m(t)+H_{m-1}(t)\right]
		+ Rt^{1/2}\left[\tilde{K}_{m-1}(t)+\tilde{H}_{m-1}(t)\right]
		\biggr),
		\end{align}
where
		\[
		\tilde{K}_m(t):=\sup_{0<s<t}s^{1/2}\lVert \nabla \tilde{V}_m(t)\rVert_{L^\infty_H L^p_z},
		\quad
		\tilde{H}_m(t):=\sup_{0<s<t}\lVert \tilde{V}_m(t)\rVert_{L^\infty_H L^p_z}. 
		\]
By \eqref{eq: Vm uniformly bounded} it follows that if 
		\[
		\max\{2 R T_0^{1/2},16 C_1\lVert a_0\rVert_{L^\infty_H L^p_z}\}<1/4C_2,
		\]
then for $m\ge 1$ and $0<t\le T<T_0$ we have
		\[
		\lVert \tilde{V}_m(t)\rVert_{\mathcal{S}(t)}
		\le C_2\biggl(16C_1 \lVert a_0\rVert_{L^\infty_H L^p_z}+2Rt^{1/2}\biggr)
		\lVert \tilde{V}_{m-1}(t)\rVert_{\mathcal{S}(t)}
		<\frac{1}{2}\lVert \tilde{V}_{m-1}(t)\rVert_{\mathcal{S}(t)}.
		\]
Therefore, since $\mathcal{S}(T)$ is a Banach space, $(V_m)_{m\in\N}$ converges in $\mathcal{S}(T)$. We denote the limit by $V$ and see that it satisfies
		\begin{align}\label{eq: mild solution V}
		V(t)=e^{tA}a_0-\int_0^t e^{(t-s)A}\PP\biggl(
		(U(s)\cdot\nabla)V(s)
		+(U(s)\cdot\nabla)v_{\text{ref}}(s)
		+(u_{\text{ref}}(s)\cdot \nabla)V(s)
		\biggr)\,ds
		\end{align}
for $t\in (0,T)$ and thus $v:=V+v_{\text{ref}}$ is a solution to the primitive equations \eqref{eq:PrimitiveEquations}. 
%
%
%
%

\textbf{Step 4:} \textit{Extending to a global solution.}

Using $V\in \mathcal{S}(T)$, the embedding $L^\infty_H L^p_z(\Omega) \hookrightarrow L^p(\Omega)$, as well as the semigroup estimates
		\begin{align*}
		t^{\vartheta}\lVert e^{tA}\PP f\rVert_{D((-\Apos)^{\vartheta})}
		\le C \lVert f\rVert_{L^p(\Omega)}, \quad
		t^{1/2}\lVert e^{tA}\PP \nabla\cdot f\rVert_{L^p(\Omega)}
		\le C \lVert f\rVert_{L^p(\Omega)}, 
		\quad t>0, 
		\quad \vartheta\in [0,1]
		\end{align*}
compare \cite[Lemma 4.6]{HieberKashiwabara2015} and  \cite[Theorem 3.7]{GGHHK17}, one easily obtains that $V(t_0)\in D((-\Apos)^{\vartheta})$ for $t_0>0$, and thus $v(t_0)\in D((-\Apos)^{1/p})$ as well, so $v$ can be extended to a global solution that is strong on $(t_0,\infty)$. 
%
%
%
%

\textbf{Step 5:} \textit{Uniqueness.}

To see that $v$ is a unique solution and thus strong on $(0,t_0)$ as well, we consider $v^{(1)}$ and $v^{(2)}$ both to be solutions in the sense of Theorem~\ref{MainTheorem} with initial value $a$ and set 
		\[
		t^*:=\inf\{t\in[0,\infty):v^{(1)}(t)\neq v^{(2)}(t)\}.
		\]
		%
Assume that $t^*\in (0,\infty)$. Then using continuity of the solutions
		\[
		a^*:=v^{(1)}(t^*)=v^{(2)}(t^*)=a^*_{\text{ref}}+a_0^*
		\]
where $a_0^*\in \Xipos$ is sufficiently small and $a^*_{\text{ref}}\in D(\Aipos)$. Let $v^*_{\text{ref}}$ be the reference solution to the initial data $a^*_{\text{ref}}$ and
		\[
		V^{(i)}(t):=v^{(i)}(t^*+t)-v^*_{\text{ref}}(t^*),
		\quad i=1,2. 
		\]
Then $V^{(1)},V^{(2)}\in \mathcal{S}(T^*)$ both satisfy the condition \eqref{eq: mild solution V} for arbitrary $t\in (0,T^*)$, $T^*\in (0,\infty)$. We set 
$\tilde{V}:=V^{(1)}-V^{(2)}$ and observe that proceeding analogously as before one obtains 
		\begin{align*}
		\tilde{H}(t)
		&\le C_3\biggl(
		t^{1/2}(\tilde{H}(t)
		+\tilde{K}(t))+H^{(1)}(t) \tilde{K}(t)
		+K^{(2)}(t) \tilde{H}(t)
		\biggr),
		\\
		\tilde{K}(t)
		&\le C_3\biggl(
		2t^{1/2}\tilde{K}(t)
		+H^{(1)}(t)\tilde{K}(t)
		+K^{(2)}(t)\tilde{H}(t)
		+K^{(1)}\tilde{K}(t)
		+K^{(2)}\tilde{K}(t)
		\biggr),
		\end{align*}
where $\tilde{H}, H^{(i)}, \tilde{K}, K^{(i)}$ are defined analogously to above. This yields
		\begin{align}\label{eq:UniquenessEstimate}
		\lVert \tilde{V}\rVert_{\mathcal{S}(t)}
		\le C_3 \left(
		t^{1/2}+H^{(1)}(t)+H^{(2)}(t)+K^{(1)}(t)+K^{(2)}(t)
		\right)
		\lVert \tilde{V}\rVert_{\mathcal{S}(t)}, 
		\quad t\in (0,T^*).
		\end{align}
By taking $T^*>0$ to be small the terms $(T^*)^{1/2}$ and $K^{(1)}(T)^*,K^{(2)}(T^*)$ can be taken to be arbitrarily small due to $\lVert \nabla V^{(i)}(t)\rVert=o(t^{-1/2})$, which in the case $t^*=0$ follows from the regularity of $v$ and in the case $t^*>0$ this follows from $\lVert \nabla v(t^*)\in L^\infty_H L^p_z(\Omega)^2$.

As for $H^{(1)}$ and $H^{(2)}$, using the same arguments that derived \eqref{eq:EstimateWithoutGradient} one obtains for $t\in (0,T^*)$ that
		\begin{align}\label{eq:Second Uniqueness Estimate}
		\begin{split}
		H^{(i)}(t)
		& \le C_1 \left( 
		\lVert a^*_0\rVert_{L^\infty_H L^p_z}
		+ K^{(i)}(t) H^{(i)}(t)
		+ R^* t^{1/2} H^{(i)}(t)
		+ R^* t^{1/2} K^{(i)}(t)
		\right),
		\end{split}
		\end{align}
		where $R^*:=\sup_{0\le t\le T^*} \lVert \nabla v^*_{\text{ref}}(t)\rVert_{L^\infty_H L^p_z}$.  
Now, we choose $T\in (0,T^*)$ so small that
		\begin{align*}
		K^{(i)}(T) H^{(i)}(T^*)
		+ R^* T^{1/2} H^{(i)}(T^*)
		+ R^* T^{1/2} K^{(i)}(T^*)
		\le \lVert a^*_0\rVert_{L^\infty_H L^p_z}.
		\end{align*}
Now, taking $\lVert a^*_0\rVert_{L^\infty_H L^p_z}$ to be sufficiently small, using that the constants $C_i>0$, $i=1,2,3$, are independent of $\lVert a^*_0\rVert_{L^\infty_H L^p_z}$, we obtain that the pre-factor in \eqref{eq:UniquenessEstimate} is smaller $1$.
%
%
Hence, it follows that $\lVert \tilde{V}\rVert_{\mathcal{S}(t)}=0$ for $t\in (0,T)$ and thus $v^{(1)}=v^{(2)}$ on $[0,t^*+T)$ which is a contradiction. 

\textbf{Step 6:} Additional regularity.

By \cite[Theorem 6.1]{HieberKashiwabara2015} we thus have 
		\[
		v\in C^1((0,\infty);\lpso)\cap C((0,\infty);W^{2,p}(\Omega))^2,
		\quad
		\pi\in C((0,\infty;W^{1,p}(G)).
		\]
The additional regularity $v\in C([0,\infty];\Xipos)$ follows from the strong continuity of the semigroup on $\Xipos$. 

%
%

For the pressure we have $\pi(t) \in W^{1,p}(G)\hookrightarrow C^{0,\alpha}([0,1]^2)$ for $\alpha \in (0,1-2/p)$. To obtain the regularity of $\nabla_H \pi$, observe that
		\[
		\nabla_H \pi
		=-Bv
		-(1-\mathbb{P})(u\cdot \nabla)v
		=-Bv
		-(1-Q)\overline{(u\cdot \nabla)v}
		\]
where we used that $(1-\mathbb{P})f=(1-Q)\overline{f}$. In the proof of Claim~\ref{SemigroupEstimates} we have already proven that $Bv(t)\in C^{0,\alpha}([0,1]^2)$ for $\alpha \in (0,1-3/p)$ if $v(t)\in W^{2,p}(\Omega)^2$.  Likewise, since $1-Q$ is continuous on $C^{0,\alpha}_{\text{per}}([0,1]^2)^2$ and $v\in C((0,\infty);W^{2,p}(\Omega))^2$, we obtain that the remaining terms belong to $C((0,\infty);C^{0,\alpha}([0,1]^2))^2$.
\end{proof}

\begin{proof}[Proof of Theorem~\ref{PerturbationTheorem}]
Here, we make use of the fact that the relevant estimates in Claim ~\ref{SemigroupEstimates} and Claim~\ref{SemigroupEstimatesHard} can also be applied in $L^{\infty}_HL^p_z(\Omega)^2$, compare Remark~\ref{rem:main}~$(c)$.
	
Let $a=a_1+a_2$ be as in Theorem~\ref{PerturbationTheorem}. Next, we introduce a decomposition setting  $a_0:=a_2+(a_1-a_{\text{ref}})$ where
\begin{align*}
a_{\text{ref}} \in D(\Aipos), 
\quad 
a_1 \in \Xipos 
\quad \hbox{and} \quad 
a_2\in L^{\infty}_HL^p_z(\Omega)^2\cap\lpso,
\end{align*}
where $a_{\text{ref}}$ is such that $a_0$ satisfies the smallness condition of Theorem~\ref{MainTheorem}.


Then the same iteration scheme as in the previous proof can be used to construct $V$ for the initial value $a_0$ and, in turn, $v$ to the initial value $a$. 

The property 
		\[
		v\in C([0,\infty);\lpso)
		\cap L^{\infty}((0,T); L^\infty_H L^p_z(\Omega))^2
		\]
follows from the boundedness and exponential stability of the semigroup on $\lpso$ and $L^\infty_H L^p_z(\Omega)^2\cap \lpso$, as well as the strong continuity on $\lpso$. 
Since the solution regularizes at $t_0>0$, compare Step 4 in the previous proof, we further obtain
$v\in C((0,\infty);\Xipos)$ from the strong continuity on $\Xipos$.	

The condition
		\begin{align}\label{eq: lim sup gradient}
		\limsup_{t\to 0+}t^{1/2}\lVert \nabla v \rVert_{L^\infty_H L^p_z(\Omega)}
		\le C \lVert a_2\rVert_{L^\infty_H L^p_z},
		\end{align}
is verified as follows.
Since Claim~\ref{SemigroupEstimates} yields
		\[
		\limsup_{t\to 0+}t^{1/2}\lVert \nabla e^{tA}(a_1-a_{\text{ref}})\rVert_{L^\infty_H L^p_z}
		=0, \quad 
		t^{1/2}\lVert \nabla e^{tA}a_2\rVert_{L^\infty_H L^p_z}
		\le C_4 \lVert a_2\rVert_{L^\infty_H L^p_z}, 
		\quad t>0,
		\]
one obtains 
		\[
		\limsup_{t\to 0+} t^{1/2}\lVert \nabla V_0(t)\rVert_{L^\infty_H L^p_z}
		\le C_4 \lVert a_2\rVert_{L^\infty_H L^p_z}. 
		\]
We now prove 
$\limsup_{t\to 0+}t^{1/2}\lVert \nabla V_{m}(t)\rVert_{L^\infty_H L^p_z}
\le 2 C_4 \lVert a_2\rVert_{L^\infty_H L^p_z}$ by induction. Assuming the claim holds for $m\in \N$ we obtain
		\begin{align*}
		\limsup_{t \searrow 0}t^{1/2}\lVert \nabla V_{m+1}(t)\rVert_{L^\infty_H L^p_z}
		\le \left(
		1
		+2C_1 \lVert a_0\rVert_{L^\infty_H L^p_z}
		+4C_4 \lVert a_2\rVert_{L^\infty_H L^p_z}
		\right)
		C_4 \lVert a_2\rVert_{L^\infty_H L^p_z}
		\end{align*}
in the same manner as \eqref{eq:EstimateWithGradient}. 
Assuming that $\lVert a_0\rVert_{L^\infty_H L^p_z}<1/4C_1$ and $\lVert a_2\rVert_{L^\infty_H L^p_z}<1/8C_4$ it follows that the claim holds for all $m\in\N$ and by taking the limit the same estimate holds for $V$. Using $v_{\text{ref}}\in C([0,\infty);C^1(\overline{\Omega})^2)$, we obtain that $v=V+v_{\text{ref}}$ satisfies \eqref{eq: lim sup gradient}.

To prove uniqueness we make the following modifications. If $v^{(1)}$ and $v^{(2)}$ are both solutions in the sense of Theorem~\ref{PerturbationTheorem}, we again define $t^*:=\inf\{t\in [0,\infty):v^{(1)}(t)\neq v^{(2)}\}$. 

In the case $t^*>0$ we have $a^*=v^{(1)}(t^*)=v^{(2)}(t^*)\in D((-\Apos)^{\vartheta})$ for any $\vartheta \in[0,1]$, compare Step 4 of the proof of Theorem~\ref{MainTheorem}. Choosing $2\vartheta-3/p>0$ we have that $ D((-\Apos)^{\vartheta})\hookrightarrow \Xipos$ and thus we can decompose $a^*=a^*_{\text{ref}}+a^*_0$ as before and the same argument applies.

If we instead have $t^*=0$ we continue to use the decomposition $a=a_{\text{ref}}+a_0$ where $a_0=a_2+(a_1-a_{\text{ref}})$. In this case we have $\lim_{t\to 0+} K^{(i)}(t)\le C \lVert a_2\rVert_{L^\infty_H L^p_z}$ for an absolute constant $C>0$ and thus the quantities on the right-hand side of \eqref{eq:UniquenessEstimate} can again be taken to be sufficiently small, where on the right-hand side of \eqref{eq:Second Uniqueness Estimate} one has $\lVert a_0\rVert_{L^\infty_H L^p_z}$ instead of $\lVert a^*_0\rVert_{L^\infty_H L^p_z}$, which again yields uniqueness.

This completes the proof.
\end{proof}

\end{document}